\title{Global existence and convergence of nondimensionalized incompressible Navier-Stokes equations in low Froude number regime}
\author{Stefano Scrobogna}
\renewcommand{\d}{\textnormal{d}}
\DeclareMathOperator*{\esssup}{ess\,sup}
\newcommand{{\2}}{{L^2\left(\mathbb{R}^3\right)}}
\newcommand{\Linfty}{L^\infty\left(\mathbb{R}^3\right)}
\newcommand{\dx}{\text{d}{x}}
\DeclareMathOperator{\dive}{{div\hspace{0.7mm}}}
\newcommand{\fine}{\hfill$\blacklozenge$}
\newcommand{\nhp}{\nabla_h^\perp}
\newcommand{\nh}{\nabla_h}
\newcommand{\Dh}{\Delta_h}
\newcommand{\Ro}{\textnormal{Ro}}
\newcommand{\Fr}{\textnormal{Fr}}
\newcommand{\PA}{\mathbb{P}\mathcal{A}}
\newcommand{\uh}{\bar{u}^h}
\newcommand{\oh}{\omega^h}
\newcommand{\diveh}{\textnormal{div}_h \ }
\newcommand{\curlh}{\textnormal{curl}_h}
\newcommand{\Es}{ \dot{\mathcal{E}}^s \left( \R^3 \right)}
\newcommand{\Eud}{ \dot{\mathcal{E}}^{1/2} \left( \R^3 \right)}
\newcommand{\drr}{\delta^\varepsilon_{r,R}}
\newcommand{\RN}[1]{%
  \textup{\uppercase\expandafter{\romannumeral#1}}%
}
\newcommand{\R}{\mathbb{R}}
\newcommand{\sumf}{\sum_{\left| q-q' \right|\leqslant4}}
\newcommand{\sumi}{\sum_{q'>q-4}}
\newcommand{\tq}{{\triangle}_q}
\newcommand{\cFFStq}{{\triangle}_q}
\newcommand{\cFFSSq}{{S}_q}
\newcommand{\B}{\mathcal{B}}
\newcommand{\hra}{\hookrightarrow}
\newcommand{\rhu}{\rightharpoonup}
\newcommand{\loc}{\textnormal{loc}}
\newcommand{\Crr}{\mathcal{C}_{r,R}}
\newcommand{\Hs}{{\dot{H}^s\left( \R^3 \right)}}
\newcommand{\set}[1]{\left\lbrace #1 \right\rbrace}
\newcommand{\Wrr}{W^\varepsilon_{r,R}}
\newcommand{\cFFSLtwo}{{L^2\left(\mathbb{R}^3\right)}}
\newcommand{\cFFSLp}{{L^p\left(\mathbb{R}^3\right)}}
\newcommand{\cFFSHud}{\dot{H}^{\frac{1}{2}}\left( \R^3 \right)}
\newcommand{\cFFSHs}{\dot{H}^{s}\left( \R^3 \right)}
\newcommand{\qg}{{quasi-geostrophic }}
\newcommand{\NS}{Navier-Stokes }
\newcommand{\CC}{\mathbb{C}}
\newcommand{\NN}{\mathbb{N}}
\newcommand{\PP}{\mathbb{P}}
\newcommand{\RR}{\mathbb{R}}
\newcommand{\ZZ}{\mathbb{Z}}
\newcommand{\dd}{\partial}
\newcommand{\abs}[1]{\left\vert#1\right\vert}
\newcommand{\psca}[2]{\left\langle\left. #1 \right| #2 \right\rangle}
\newcommand{\pint}[1]{\left[#1\right]}
\newcommand{\pare}[1]{\left(#1\right)}
\newcommand{\norm}[1]{\left\Vert#1\right\Vert}
\newcommand{\spres}{\sum_{\abs{q'-q}\leqslant 4}}
\newcommand{\sloin}{\sum_{q'> q-4}}
\theoremstyle{theorem}
\newtheorem{theorem}{Theorem}[section]
\newtheorem{prop}[theorem]{Proposition}
\newtheorem{lemma}[theorem]{Lemma}
\newtheorem{cor}[theorem]{Corollary}
\newtheorem{defn}[theorem]{Definition}
\theoremstyle{definition}
\newtheorem{rem}[theorem]{Remark}
\numberwithin{equation}{section}
\begin{document}
 
 \date{\today}
\keywords{Incompressible fluids, stratified fluids,  parabolic systems, bootstrap}
\subjclass[2010]{35Q30,  35Q35, 37N10, 76D03, 76D50, 76M45, 76M55}

 \maketitle
 
 \begin{abstract}
 We prove that the incompressible, density dependent, \NS\ equations are globally well posed in a low Froude number regime. The density profile is supposed to be increasing in depth and linearized around a stable state. Moreover if the Froude number tends to zero we prove that such system converges (strongly) to a two-dimensional, stratified \NS\ equations with full diffusivity.
  No smallness assumption is considered on the initial data.
 \end{abstract}

%\epigraph{The laws of prose writing are as immutable as those of flight, of mathematics, of physics.}{Ernest Hemingway}

 \section{Introduction}
 
 In the present article we study the behavior of strong solutions of the following modified Boussinesq system which represents the nondimensionalized incompressible density-dependent \NS\ equations
\begin{equation}\label{perturbed BSSQ} \tag{PBS$_\varepsilon$}
\left\lbrace
\begin{aligned}
&\partial_t u^\varepsilon  + u^\varepsilon \cdot\nabla u^\varepsilon  -\nu \Delta  u^\varepsilon -\displaystyle\frac{1}{\varepsilon} \rho^\varepsilon \overrightarrow{e}_3 &=& -\displaystyle \frac{1}{\varepsilon} \nabla \Phi^\varepsilon,\\
&\partial_t \rho^\varepsilon + u^\varepsilon \cdot\nabla \rho^\varepsilon  -\nu' \Delta \rho^\varepsilon + \displaystyle\frac{1}{\varepsilon} u^{3,\varepsilon} & =& \;0,\\
&\dive u^\varepsilon =\;0,\\
& \left. \left( u^\varepsilon, \rho^\varepsilon \right)\right|_{t=0}= \left( u_0, \rho_0 \right),
\end{aligned}
\right.
\end{equation} 
where the functions $ U^\varepsilon, \rho^\varepsilon $ depend upon the variables  $ \left( t,x \right)\in \R_+\times \R^3 $ in the regime $ \varepsilon \to 0 $. The space variable $ x $ shall be many times considered separately with respect to the horizontal and vertical components, i.e. $ x= \left( x_h, x_3 \right)= \left( x_1, x_2, x_3 \right) $. In the present paper we denote $ \Delta= \partial_1^2 + \partial_2^2 + \partial_3^2 $ the standard laplacian,  $ \Dh= \partial_1^2+\partial_2^2 $ is the laplacian in the horizontal directions, as well as $ \nh=\left( \partial_1,\partial_2 \right)^\intercal , \nhp=\left( -\partial_2,\partial_1 \right)^\intercal $. In the same way the symbol $ \nabla $ represents the gradient in all space directions $ \nabla= \left( \partial_1, \partial_2, \partial_3 \right) $. Considered a vector field $ w $ we denote $ \dive w= \partial_1 w^1+ \partial_2w^2+\partial_3 w^3 $.
Given two three-components vector fields $ w, z $ the notation $ w\cdot\nabla z  $ indicates the operator
$$
w\cdot \nabla z = \sum_{i=1}^3 w^i\partial_i z.
$$
 Generally for any two-components vector field $u=\left( u^1,u^2 \right)$ we shall denote as $ u^\perp = \left( -u^2,u^1 \right) $. The viscosity $ \nu, \nu' $ above  are strictly positive constants $ \nu, \nu'\geqslant c >0 $
We give in what follows a short physical justification of the system \eqref{perturbed BSSQ}.\\
 The system \eqref{perturbed BSSQ} describes how a stratified fluid reacts in a long time-scale $ T $ to big perturbations around a state of dynamical equilibrium.
To understand in what consist such perturbation let us consider a fluid which is perfectly stratified: gravity tends to minimize the gravitational potential and hence to dispose heavier layers under lighter ones. An equilibrium state is hence a configuration in which the fluid density is a function depending on the vertical variable $ x_3 $ only and it is decreasing in $ x_3 $. Let us suppose now to displace a certain volume of fluid with high density in a higher region with lower density (perturbation of equilibrium). Gravity will induce downward motion and Archimede's principle will provide upward buoyancy. This process induces a periodic motion of frequency $ N $ appearing in the third equation of \eqref{eq:physical_syst}.
 The value $ N $ appearing in the equation for $ \rho $ is called Brunt-V\"ais\"al\"a frequency, and describes the oscillatory behavior induced by the buoyancy which is caused by the stratification in decreasing-density stacks. We suppose $ N $ to be constant, and indeed $ N= T_N^{-1} $ where $ T_N $ is the \textit{characteristic time of stratification}. We define the \textit{Froude number} as
$$
\Fr = \frac{T_N}{T} \ll 1.
$$
The Froude number Fr  quantifies the stratification effects on the dynamics of the fluid; the smaller it is the more relevant such effects are. In fact $ \Fr= T_N / T $ is a ratio which involves time-scales only; the characteristic time of stratification $ T_N $ is an intrinsic magnitude of the system which is determinate by the stratification frequency only, while $ T $ can be chosen as large as the observer desires. It is reasonable hence to think that in very large time-scales $ T $ the periodic motion caused by an induced equilibrium disturbance will somehow disperse, and the fluid will once again recover a configuration of equilibrium. For a more detailed physical discussion on the derivation of \eqref{perturbed BSSQ} we refer to Section \ref{sec:physics}. \\

Let us rewrite the system \eqref{perturbed BSSQ} into the following more compact form
\begin{equation}\tag{\ref{perturbed BSSQ}}
\left\lbrace
\begin{aligned}
& {\partial_t U^\varepsilon}+ u^\varepsilon \cdot \nabla U^\varepsilon - \mathbb{D}U^\varepsilon + \frac{1}{\varepsilon}\mathcal{A}U^\varepsilon = -\frac{1}{\varepsilon} \left( \begin{array}{c}
\nabla \Phi^\varepsilon\\
0
\end{array} \right),\\
& U^\varepsilon=\left( U^\varepsilon,\rho^\varepsilon \right),\\
&\dive u^\varepsilon=0,
\end{aligned}
\right.
\end{equation}
 where $ U^\varepsilon= \left( u^\varepsilon, \rho^\varepsilon \right) $ and
\begin{align}\label{matrici}
\mathcal{A}= & \left( \begin{array}{cccc}
0&0&0&0\\
0&0&0&0\\
0&0&0&1\\
0&0&-1&0\\
\end{array} \right),
&
\mathbb{D} =& \left( \begin{array}{cccc}
\nu\Delta&0&0&0\\
0&\nu\Delta&0&0\\
0&0&\nu\Delta&0\\
0&0&0&\nu'\Delta\\
\end{array} \right).
\end{align}
The above form for the system is the one we shall always adopt. We  use as well the following differential operator
\begin{align}\label{eq:definition_proj_Leray}
\PP= \left( 
\begin{array}{c|c}
\delta_{i,j}-\Delta^{-1}\partial_i\partial_j & 0\\
\hline
0 & 1
\end{array}
 \right)_{i,j=1,2,3}.
\end{align}
The operator $ \PP $ acts in the following way: given a four component vector field $ V=V \left( x \right)= \left( V^1, V^2, V^3, V^3 \right)= \left( V', V^4 \right) $ it maps $ V' $ onto a divergence-free vector field  and leaves untouched $ V^4 $, i.e.
\begin{equation*}
\PP V = \left(
\begin{array}{cc}
V' - \Delta^{-1} \nabla \dive V', & V^4
\end{array}
  \right).
\end{equation*}
We underline that $ \PP $ and $ \mathbb{D} $ commute. We shall use this property (even implicitly) repeatedly all along the present work.

\noindent

To the best of our knowledge there are not many works concerning the system \eqref{perturbed BSSQ}. In \cite{embid_majda2} P. Embid and A. Majda study the distributional limit of the primitive equations
\begin{equation}
\label{eq:PE}\tag{PE}
\left\lbrace
\begin{aligned}
& 
\partial_t u^{\Ro,\Fr} + u^{\Ro,\Fr} \cdot \nabla u^{\Ro,\Fr} -\nu \Delta u^{\Ro,\Fr}
+\frac{1}{\Ro} u^{\Ro,\Fr} \wedge \overrightarrow{e_3}
 - \frac{1}{\Fr}\rho^{\Ro,\Fr} \overrightarrow{e_3}= -  \nabla P^{\Ro,\Fr},
\\
 & \partial_t \rho^{\Ro,\Fr} +u^{\Ro,\Fr} \cdot \nabla \rho^{\Ro,\Fr} - \nu' \Delta \rho^{\Ro,\Fr} + \frac{1}{\Fr}\ u^{3,\Fr} =0,\\
& \dive u^{\Ro,\Fr} =0,\\
& \left. \left( u^{\Ro,\Fr}, \rho^{\Ro,\Fr} \right) \right|_{t=0}= \left( u_0, \rho_0 \right).
\end{aligned}
\right.
\end{equation}
In the regimes $ \Ro, \Fr = \mathcal{O} \left( \varepsilon \right) $ and $ \Ro \gg \Fr = \mathcal{O} \left( \varepsilon \right) $ in the case in which the domain is periodic-in-space. The value $ \Ro $ is called the \textit{Rossby number} and quantifies the influence of the rotation on the motion of a fluid in the same way as the Froude number quantifies the stratification effects.   For a formal derivation of \eqref{eq:PE} in the case $ \Ro, \Fr = \mathcal{O} \left( \varepsilon \right) $ we refer to the  introduction of \cite{Charve_thesis}. Concerning always the equations \eqref{eq:PE} in the regime $ \Ro, \Fr = \mathcal{O} \left( \varepsilon \right) $ in the whole space we refer to the pioneering work \cite{chemin_prob_antisym} in which J.-Y. Chemin proves that \eqref{eq:PE} is globally well posed in the case in which $ \Fr= \Ro = \varepsilon $, only a certain part of the initial datum is small and the difference $ |\nu-\nu'| $ is small. Moreover, in \cite{charve1} and \cite{charve2}, F. Charve using dispersive tools (Strichartz estimates) proves that \eqref{eq:PE} in the regime $ \Ro, \Fr = \mathcal{O} \left( \varepsilon \right), \ \Ro\neq \Fr $ are globally well posed and converge to a suitable limit system known as the \qg\ system without any smallness assumption on the initial data.

The system \eqref{perturbed BSSQ} is very close to the system \eqref{eq:PE} in the regime $ \Ro \gg \Fr = \mathcal{O} \left( \varepsilon \right) $, we refer to \cite{embid_majda2} and references therein for a justification of such fact. 
Recently 
 K. Widmayer proved in \cite{Widmayer_Boussisnesq_perturbation}  that the inviscid counterpart of \eqref{perturbed BSSQ} converges locally in $ {\cFFSLtwo} $ to a stratified two-dimensional Euler system. \\
The case in which \eqref{perturbed BSSQ} evolves in a periodic domain is treated in \cite{Scrobo_low_Froude_periodic}.

\subsection{The functional setting}
In this section we introduce the functional setting that we  adopt all along the paper.   We  define the homogeneous Sobolev space $ \dot{H}^s \left( \R^d \right), \ s\in \R $ as the space of tempered distributions $ u $ on $ \R^d $ whose Fourier transform $ \hat{u}\in L^1_{\loc} \left( \R^d \right) $ and such that
\begin{equation*}
\left\| u \right\|_{\dot{H}^s \left( \R^d \right)}^2
= \norm{ \left( -\Delta \right)^{s/2} u }_{L^2 \left( \R^d \right)}^2
 = \int_{\R^d} \left| \xi \right|^{2s} \left| \hat{u} \left( \xi \right) \right|^2 \d \xi < \infty.
\end{equation*}
Since we intend to study the behavior of solutions of nonlinear partial differential equations we are interested to understand the regularity of a product of distributions. Generally a product of distributions is not well defined as it was first proved in \cite{Schwartz54}. This is no longer true if the distributions considered belong to some suitable homogeneous Sobolev space;
\begin{lemma}\label{lem:prod_rules_Sobolev}
Let $ u \in \dot{H}^{s_1} \left( \R^d \right), \ v \in \dot{H}^{s_2} \left( \R^d \right) $ where $ s_1, s_2 <d/2 $ and $ s_1+s_2 >0 $. Then 
\begin{equation*}
\norm{ u \ v}_{ \dot{H}^{s_1 + s_2 -\frac{d}{2}} \left( \R^d \right)}
\leqslant C_{s_1, s_2} \norm{u}_{ \dot{H}^{s_1} \left( \R^d \right)}
\norm{v}_{ \dot{H}^{s_2} \left( \R^d \right)},
\end{equation*}
or, equivalently, the point-wise  multiplication maps continuously $  \dot{H}^{s_1} \left( \R^d \right) \times \dot{H}^{s_2} \left( \R^d \right)   $ to \ $ \dot{H}^{s_1 + s_2 -\frac{d}{2}} \left( \R^d \right) $.
\end{lemma}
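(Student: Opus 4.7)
The plan is to prove the bound via Bony's paraproduct decomposition combined with standard Littlewood-Paley / Bernstein estimates. Throughout, let $(\Delta_q)_{q\in\mathbb{Z}}$ and $(S_q)_{q\in\mathbb{Z}}$ denote the dyadic homogeneous Littlewood-Paley blocks, so that
\[
  \|w\|_{\dot{H}^{s}(\mathbb{R}^d)}^{2}\sim\sum_{q\in\mathbb{Z}} 2^{2qs}\,\|\Delta_q w\|_{L^2}^{2}.
\]
I would first decompose
\[
  uv \;=\; T_u v + T_v u + R(u,v),\qquad T_u v=\sum_{q}S_{q-1}u\,\Delta_q v,\qquad R(u,v)=\sum_{|q-q'|\le 1}\Delta_q u\,\Delta_{q'}v,
\]
and estimate $\Delta_Q$ of each piece separately, aiming for a bound of the form $2^{Q(s_1+s_2-d/2)}\|\Delta_Q(\cdot)\|_{L^2}\lesssim c_Q\,\|u\|_{\dot H^{s_1}}\|v\|_{\dot H^{s_2}}$ with $(c_Q)\in\ell^{2}$.

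For the paraproduct $T_u v$, the spectral localisation of the summands forces $\Delta_Q(S_{q-1}u\,\Delta_q v)$ to vanish unless $|Q-q|\le N_0$. Writing $\|S_{q-1}u\|_{L^\infty}\lesssim \sum_{q'\le q-2} 2^{q'd/2}\|\Delta_{q'}u\|_{L^2}$ by Bernstein and then reorganising with the factor $2^{q'(d/2-s_1)}2^{q's_1}$, I exploit precisely the assumption $s_1<d/2$ to sum the geometric series; this yields $\|S_{q-1}u\|_{L^\infty}\lesssim 2^{q(d/2-s_1)}\|u\|_{\dot H^{s_1}}$. Combined with $\|\Delta_q v\|_{L^2}=2^{-qs_2}b_q\|v\|_{\dot H^{s_2}}$ with $(b_q)\in\ell^2$, one gets exactly the desired $\ell^2$-summable bound after multiplying by $2^{Q(s_1+s_2-d/2)}$. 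The term $T_v u$ is handled identically, using instead $s_2<d/2$.

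The remainder $R(u,v)$ is where the hypothesis $s_1+s_2>0$ enters, and it is the slightly delicate point. Since $\Delta_q u\,\tilde\Delta_q v$ is spectrally localised in a ball of radius $\sim 2^{q}$, the block $\Delta_Q$ annihilates it unless $q\ge Q-N_1$. Using Bernstein in the form $\|\Delta_Q f\|_{L^2}\lesssim 2^{Qd/2}\|f\|_{L^1}$ and Cauchy--Schwarz, I estimate
\[
  \|\Delta_Q R(u,v)\|_{L^2}\;\lesssim\;\sum_{q\ge Q-N_1} 2^{Qd/2}\|\Delta_q u\|_{L^2}\|\tilde\Delta_q v\|_{L^2}\;\lesssim\;\sum_{q\ge Q-N_1} 2^{Qd/2}2^{-q(s_1+s_2)}a_q b_q,
\]
with $(a_q),(b_q)\in\ell^2$. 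Multiplying by $2^{Q(s_1+s_2-d/2)}$ produces the factor $2^{(Q-q)(s_1+s_2)}$ on the right, which, thanks to $s_1+s_2>0$ and $q\ge Q-N_1$, is a convolution kernel in $\ell^1(\mathbb{Z})$. Young's inequality for sequences then gives the $\ell^2_Q$ bound $\|a_q b_q\|_{\ell^1_q}\le \|a\|_{\ell^2}\|b\|_{\ell^2}$ up to a harmless factor, and the assembly of the three contributions concludes the proof.

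The main obstacle, as just noted, is the remainder $R(u,v)$: the two paraproducts only see one of the two regularities at a time and are straightforward, but in $R(u,v)$ both factors sit at the same dyadic scale, so only the combined regularity $s_1+s_2$ is available, and the positivity $s_1+s_2>0$ is indispensable to make the high-frequency sum over $q\ge Q-N_1$ convergent. The remaining assumption $s_1+s_2<d$, implicitly needed to make $\dot H^{s_1+s_2-d/2}$ a sensible space of tempered distributions, follows for free from $s_1,s_2<d/2$.
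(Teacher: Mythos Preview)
Your argument is correct and follows the classical paraproduct route. The paper itself does not supply a proof of this lemma: it is stated as a preliminary fact (the standard product law in homogeneous Sobolev spaces), implicitly referring the reader to \cite{bahouri_chemin_danchin_book}, where exactly the Bony decomposition argument you outline is given. One minor wording point in your remainder estimate: after multiplying by $2^{Q(s_1+s_2-d/2)}$ you obtain a convolution of the $\ell^1$ sequence $(a_q b_q)_q$ against the kernel $2^{m(s_1+s_2)}\mathbf{1}_{m\le N_1}$; the cleanest way to reach an $\ell^2_Q$ bound is to observe that this kernel lies in $\ell^2$ (again thanks to $s_1+s_2>0$) and apply Young's inequality in the form $\ell^2*\ell^1\hookrightarrow\ell^2$, combined with $\|ab\|_{\ell^1}\le\|a\|_{\ell^2}\|b\|_{\ell^2}$. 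With that clarification your proof is complete and matches the standard one.
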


Homogeneous Sobolev spaces are Hilbert spaces if and only if $ s<d/2 $, in this case the scalar product of two elements of $ \dot{H}^s \left( \R^d \right) $ is defined as
\begin{equation}
\label{eq:Hs_sp}
\begin{aligned}
\left( \left. u \right| v \right)_{{\cFFSHs}} = & \ \int_{\R^d} \left[ \left( -\Delta \right)^{s/2} u \left( x \right) \right] \cdot \left[ \left( -\Delta \right)^{s/2} v \left( x \right) \right]  \dx,\\
= & \ \int_{\R^d_\xi} \left( \left| \xi \right|^s \hat{u} \left( \xi \right) \right) \cdot \left( \left| \xi \right|^s \hat{v} \left( \xi \right) \right)  \d \xi.
\end{aligned}
\end{equation}
We refer to \cite[Chapter 1]{bahouri_chemin_danchin_book} for a counterexample in the case in which $ s\geqslant d/2 $.\\
The norm of $ \cFFSHs $ does not take in account the behavior of $ u $ in a frequency set close to zero, the non-homogeneous Sobolev space $ H^s \left( \R^d \right) $ defined as
\begin{equation*}
H^s \left( \R^d \right) =\set{ u \in \mathcal{S}' \left( \R^d \right)
\left| \ 
\left\| \left( 1-\Delta \right)^{s/2} u \right\|_{L^2 \left( \R^d \right)} < \infty 
\right. },
\end{equation*}
gives a deeper description of the tempered distribution $ u $ and a mean to control the low-frequencies as well. Nonetheless we shall work constantly with homogeneous Sobolev spaces since the propagation of a critical homogeneous Sobolev regularity suffice to deduce smoothness of solutions, we shall briefly explain such fact. It is moreover interesting to notice that
\begin{equation*}
H^s \left( \R^3 \right) = \cFFSLtwo \cap \cFFSHs, \text{ if } s\geqslant 0.
\end{equation*}

We shall see that the solutions of \eqref{perturbed BSSQ}, if $ \varepsilon $ is sufficiently close to zero, develop a behavior which is radically different along the horizontal direction $ x_h $ and the vertical $ x_3 $. This motivates the introduction of the following \textit{anisotropic Lebesgue spaces}, which are spaces of function whose integrability differs along horizontal and vertical directions. The anisotropic Lebesgue spaces $L_h^p\left( L_v^q \right)$ with $p, q \geq 1$ are defined as
\begin{multline*}
	L_h^p\left( L_v^q(\RR^3) \right) = L^p(\RR^2_h;L^q(\RR_v))
	\\ = \set{ u\in \mathcal{S'}\left| \norm{u}_{L_h^pL_v^q} = \Big[ \int_{\RR^2_h} \Big\vert \int_{\RR_v}\abs{u(x_h,x_3)}^qdx_3 \Big\vert^{\frac{p}q}dx_h \Big]^{\frac{1}p} < +\infty\right. }.
\end{multline*}
Here, the order of integration is important. Indeed, if $1 \leq p \leq q$ and if $u: X_1\times X_2 \rightarrow \RR$ is a
function in $L^p(X_1;L^q(X_2))$, where $(X_1,d\mu_1)$, $(X_2,d\mu_2)$ are measurable spaces, then $u \in L^q(X_2;L^p(X_1))$ and
\begin{equation}\label{eq:LpLqanisotropic ineq}
	\norm{u}_{L^q(X_2;L^p(X_1))} \leq \norm{u}_{L^p(X_1;L^q(X_2))}.
\end{equation}
Obviously we can define in  a symmetric way the space $ L^p_v \left( L^q_h \right)= L^p \left( \R_v; L^q \left( \R^2_h \right) \right) $. We shall be interested to study spaces of the kind $ L^\infty_ v \left( L^q_h \right) $, $ q\in [1, \infty] $, they are indeed defined as the tempered distributions such that 
\begin{equation*}
\left\| u \right\|_{L^\infty_ v \left( L^q_h \right)} = \esssup_{x_3\in \R_v} \left\| u \left( \cdot, x_3 \right) \right\|_{L^q \left( \R^2_h \right)}<\infty.
\end{equation*}
In a similar way we can define $ L^p_h \left( L^\infty_v \right) $ spaces via the norm
\begin{equation*}
\left\| u \right\|_{L^p_h \left( L^\infty_v \right)}=
\left( \int_{\R^2_h} \esssup_{x_x\in \R_v} \left| u \left( x_h, x_3 \right) \right|^p \d x_h \right)^{\frac{1}{p}}.
\end{equation*}
 It is well known (see \cite{Chemin92}, for instance) that as long as \NS\ equations can propagate $ \cFFSHud $ data the solutions are in fact regular, for this reason it makes sense to define the following space for $ s>0 $:
\begin{equation*}
\dot{\mathcal{E}}^s_T \left( \R^d \right) = \mathcal{C} \left( [0,T); \dot{H}^s \left( \R^d \right) \right) \cap L^2 \left( [0,T); \dot{H}^{s+1} \left( \R^d \right) \right),
\end{equation*}
and since we are interested in global-in-time regularity 
we define hence the space 
$$ \dot{\mathcal{E}}^s \left( \R^d \right)= \dot{\mathcal{E}}^s_\infty \left( \R^d \right). $$

\section{Statement of the main result and preliminaries}

Before stating the main result let us mention that, as $ \mathcal{A} $ defined in \eqref{matrici} appearing in \eqref{perturbed BSSQ} is skew-symmetric it does not bring any energy in suitable energy spaces built on $ L^2 $, such as homogeneous and non-homogeneous Sobolev spaces $ \dot{H}^s, H^s, \ s\in \R $ and Besov spaces $ B^s_{2, r}, \ s\in \R, r\geqslant 1 $. We refer to \cite{bahouri_chemin_danchin_book} for a detailed definition and a deep description of Besov spaces in the whole space. This implies in particular that Fujita-Kato and Leray theorem \cite{bahouri_chemin_danchin_book}, \cite{Leray} can be applied on system \eqref{perturbed BSSQ}. At first we state the following theorem \textit{\`a la Leray}:

\begin{theorem}\label{thm:Leray_applied}
Let $ U_0 $ be in $ {\cFFSLtwo} $, for each $ \varepsilon >0 $ 
there exist a sequence $ \left( U^\varepsilon \right)_{\varepsilon > 0} $ such that, for each $ \varepsilon > 0 $, the function $ U^\varepsilon $ is a distributional solution of \eqref{perturbed BSSQ} with initial data $ U_0 $. Moreover the sequence $ \left( U^\varepsilon \right)_{\varepsilon >0} $ is uniformly bounded in $ \dot{\mathcal{E}}^0 \left( \R^3 \right) $.
\end{theorem}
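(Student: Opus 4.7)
The plan is to prove this by a direct adaptation of Leray's classical construction, the key observation being that the antisymmetric operator $\mathcal{A}$ and the pressure gradient contribute nothing to the $L^2$ energy balance, so the $L^2$ framework is preserved uniformly in $\varepsilon$. First I would regularize the system: for each $n\in\NN$ I introduce a Friedrichs truncation $J_n$ (multiplication on the Fourier side by the indicator of a ball of radius $n$ in $\R^3_\xi$) and consider the approximating system
\begin{equation*}
\partial_t U^{\eps}_n + J_n \PP \bigl( J_n u^{\eps}_n \cdot \nabla J_n U^{\eps}_n\bigr) - \mathbb{D} J_n U^{\eps}_n + \frac{1}{\eps} J_n \PP \mathcal{A} J_n U^{\eps}_n = 0,
\end{equation*}
with initial datum $J_n U_0$. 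This is an ODE on the closed subspace of $L^2(\R^3)$ of functions supported in a ball of radius $n$ (for the divergence-free part), so the Cauchy-Lipschitz theorem yields a local solution. The projector $\PP$ takes care of the pressure via $\Phi^\eps = (-\Delta)^{-1}\dive (u^\eps\cdot\nabla u^\eps - \eps^{-1}\rho^\eps \overrightarrow{e}_3)$.

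Next I take the $L^2$ scalar product of the approximate equation with $U^{\eps}_n$ and exploit four cancellations. The convection term vanishes since $\dive u^{\eps}_n = 0$. The pressure term vanishes since $\PP$ is an orthogonal projector and the last component of $\nabla\Phi^\eps$ is zero. The operator $\mathcal{A}$ is skew-symmetric, so $(\mathcal{A}V\mid V)_{L^2}=0$ for every $V$ (reading off the matrix, $\mathcal{A}U = (0,0,\rho,-u^3)^{\mathsf T}$ and $(\mathcal{A}U\mid U) = \rho u^3 - u^3 \rho = 0$). Finally the diffusion contributes $-(\mathbb{D}U^{\eps}_n \mid U^{\eps}_n)_{L^2} \geq \min(\nu,\nu')\,\norm{\nabla U^{\eps}_n}_{L^2}^2$. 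Integrating in time yields, for every $t>0$ and \textit{uniformly in both $n$ and $\eps$},
\begin{equation*}
\norm{U^{\eps}_n(t)}_{L^2(\R^3)}^2 + 2\min(\nu,\nu') \int_0^t \norm{\nabla U^{\eps}_n(s)}_{L^2(\R^3)}^2\, \d s \leqslant \norm{U_0}_{L^2(\R^3)}^2.
\end{equation*}
In particular the $J_n$-solutions are global and uniformly bounded in $\dot{\mathcal{E}}^0(\R^3)$.

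The remaining step is the passage $n\to\infty$ at fixed $\eps$, the convergence $\eps\to 0$ being irrelevant here since the energy estimate is already $\eps$-uniform. From the uniform bound we extract a subsequence such that $U^{\eps}_n$ converges weakly-$\ast$ in $L^\infty_t L^2_x$ and weakly in $L^2_t \dot H^1_x$ to some $U^\eps \in \dot{\mathcal{E}}^0(\R^3)$. The linear terms pass to the limit immediately by weak convergence. The genuine obstacle, as usual for Leray-type theorems, is the quadratic convection $J_n\PP(J_n u^\eps_n \cdot \nabla J_n U^{\eps}_n)$, for which weak convergence is insufficient. I would overcome it by using the equation to control $\partial_t U^{\eps}_n$ in $L^2_{\mathrm{loc}}(\R_+; H^{-N}_{\mathrm{loc}}(\R^3))$ for $N$ large enough (the worst term being the singular $\eps^{-1} \mathcal{A} U^{\eps}_n$, but this is linear and bounded in $L^2_{t,x}$), and applying the Aubin-Lions lemma on each ball $B_R\subset\R^3$ to deduce strong convergence of $U^{\eps}_n$ in $L^2_{\mathrm{loc}}((0,T)\times\R^3)$. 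This is enough to pass to the limit in the distributional formulation of the nonlinearity. The limit $U^\eps$ thus obtained is a distributional solution of \eqref{perturbed BSSQ} and, by weak lower semicontinuity of the norms, it inherits the $\eps$-uniform bound in $\dot{\mathcal{E}}^0(\R^3)$, which concludes the proof.
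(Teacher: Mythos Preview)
Your proposal is correct and follows exactly the approach the paper has in mind. The paper does not actually write out a proof of this theorem: it simply observes that, since $\mathcal{A}$ is skew-symmetric and therefore contributes no energy in $L^2$-based spaces, the classical Leray theorem applies verbatim to \eqref{perturbed BSSQ}, and then states the result with a reference to \cite{Leray} and \cite{bahouri_chemin_danchin_book}. Your outline (Friedrichs truncation, the four cancellations in the $L^2$ energy estimate, Aubin--Lions compactness to pass to the limit in the nonlinearity) is precisely the standard Leray construction that underlies those references, so there is nothing to compare.
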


Let us now state a result of existence in the homogeneous Sobolev setting, for a proof we refer to \cite{bahouri_chemin_danchin_book},
\begin{theorem}\label{thm:FK_applied}
Let us suppose $ U_0 \in  {\cFFSHud} $, then
there exists a maximal time $  T_{U_0} $ independent of $ \varepsilon $ such that, for each $ T\in \left[0, T_{U_0}\right) $, there exists a unique solution $ U^\varepsilon $ of \eqref{perturbed BSSQ} in $ L^4 \left( [0,T ]; \dot{H}^1 \left( \R^3 \right) \right) $ which also  belongs to the space $ \dot{\mathcal{E}}^{1/2}_T \left( \R^3 \right) $.
\begin{itemize}
 \item  If the initial data is small in the space $ {\cFFSHud} $, i.e. if $ \left\| U_0 \right\|_{{\cFFSHud}}\leqslant \tilde{c}\min \{ \nu, \nu'\} $, then $ T_{U_0}= \infty $.
 
 \item If $ T_{U_0} $ is finite then
 \begin{equation} \label{eq:BU_condition}
 \int_0^{T_{U_0}} \left\| U^\varepsilon \left( \tau \right) \right\|_{\dot{H}^1 \left( \R^3 \right)}^4 \d \tau =\infty.
 \end{equation}
\end{itemize}
\end{theorem}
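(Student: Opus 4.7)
\medskip
\noindent\textbf{Proof plan.} The argument is the classical Fujita--Kato scheme at the critical regularity $\dot H^{1/2}(\R^3)$, the only novelty being that the singular term $\varepsilon^{-1}\mathcal{A} U^\varepsilon$ must be kept out of the a priori estimates, for otherwise the existence time $T_{U_0}$ could not be $\varepsilon$-uniform. The crucial structural observation is that $\mathcal{A}$ is a constant, skew-symmetric matrix (see \eqref{matrici}), so that for every $s\in\R$ and every $V\in\dot H^{s}(\R^3)$,
\begin{equation*}
\bigl(\mathcal{A} V\bigm|V\bigr)_{\dot H^{s}(\R^3)} = \bigl(\mathcal{A}(-\Delta)^{s/2}V\bigm|(-\Delta)^{s/2}V\bigr)_{L^2(\R^3)} = 0.
\end{equation*}
Consequently the penalization disappears from every $\dot H^s$ energy identity and all our a priori estimates become automatically $\varepsilon$-uniform.

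Applying $\PP$ to \eqref{perturbed BSSQ} to kill the pressure gradient, taking the $\dot H^{1/2}$ scalar product \eqref{eq:Hs_sp} with $U^\varepsilon$ and using the above vanishing together with the structure of $\mathbb{D}$ from \eqref{matrici} yields
\begin{equation*}
\frac{1}{2}\frac{d}{dt}\norm{U^\varepsilon}_{\dot H^{1/2}}^2 + \min(\nu,\nu')\norm{U^\varepsilon}_{\dot H^{3/2}}^2 \leq \bigl|\bigl(u^\varepsilon\cdot\nabla U^\varepsilon\bigm|U^\varepsilon\bigr)_{\dot H^{1/2}}\bigr|.
\end{equation*}
For the convective term I would exploit $\dive u^\varepsilon=0$ to write $u^\varepsilon\cdot\nabla U^\varepsilon=\dive(u^\varepsilon\otimes U^\varepsilon)$, apply Lemma \ref{lem:prod_rules_Sobolev} with $s_1=s_2=1$, $d=3$ to obtain $\norm{u^\varepsilon\otimes U^\varepsilon}_{\dot H^{1/2}(\R^3)} \leq C\norm{U^\varepsilon}_{\dot H^1(\R^3)}^2$, and conclude via Cauchy--Schwarz that
\begin{equation*}
\bigl|\bigl(u^\varepsilon\cdot\nabla U^\varepsilon\bigm|U^\varepsilon\bigr)_{\dot H^{1/2}}\bigr| \leq C\norm{U^\varepsilon}_{\dot H^1}^2\norm{U^\varepsilon}_{\dot H^{3/2}}.
\end{equation*}
A Young inequality then absorbs one factor of $\norm{U^\varepsilon}_{\dot H^{3/2}}$ into the viscous term and produces the key differential inequality
\begin{equation*}
\frac{d}{dt}\norm{U^\varepsilon}_{\dot H^{1/2}}^2 + \min(\nu,\nu')\norm{U^\varepsilon}_{\dot H^{3/2}}^2 \leq \frac{C^2}{\min(\nu,\nu')}\norm{U^\varepsilon}_{\dot H^1}^4.
\end{equation*}

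From this single inequality all three claims of the statement follow by standard manipulations. Substituting the interpolation $\norm{U^\varepsilon}_{\dot H^1}^2 \leq \norm{U^\varepsilon}_{\dot H^{1/2}}\norm{U^\varepsilon}_{\dot H^{3/2}}$ on the right-hand side produces a term $\norm{U^\varepsilon}_{\dot H^{1/2}}^2\norm{U^\varepsilon}_{\dot H^{3/2}}^2$, which can again be absorbed into the viscous one as long as $\norm{U^\varepsilon(t)}_{\dot H^{1/2}}$ stays below a fixed multiple of $\min(\nu,\nu')$; a standard continuity argument then closes the bound globally in time when the initial datum is small, proving the first item. For arbitrary data I would construct approximate solutions by a Friedrichs spectral truncation, propagate the same estimate uniformly in both the truncation parameter and $\varepsilon$, and pass to the limit, getting a solution in $\dot{\mathcal{E}}^{1/2}_T(\R^3)$ on a maximal interval $[0,T_{U_0})$; uniqueness in $L^4_T\dot H^1$ follows from applying the same product estimate to the difference of two solutions together with Grönwall. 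The blow-up criterion \eqref{eq:BU_condition} is then read directly from the differential inequality: if $\int_0^{T_{U_0}}\norm{U^\varepsilon}_{\dot H^1}^4\d\tau<\infty$, Grönwall yields finite bounds on $\norm{U^\varepsilon}_{L^\infty([0,T_{U_0});\dot H^{1/2})}$ and $\norm{U^\varepsilon}_{L^2([0,T_{U_0});\dot H^{3/2})}$, so $U^\varepsilon$ has a $\dot H^{1/2}$ limit at $T_{U_0}$ and the local theorem allows one to extend the solution past $T_{U_0}$, contradicting maximality.

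The only genuinely delicate step is the critical-regularity nonlinear estimate, which is precisely the content of Lemma \ref{lem:prod_rules_Sobolev}; everything else is a routine energy/bootstrap argument once the skew-symmetry of $\mathcal{A}$ has been used to neutralize the penalization term in every $\dot H^s$ identity, which is exactly what forces the existence time $T_{U_0}$ to be independent of $\varepsilon$.
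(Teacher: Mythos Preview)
Your proposal is correct and follows the classical Fujita--Kato scheme; the paper itself does not give a proof of this theorem but simply refers to \cite{bahouri_chemin_danchin_book}, noting just before the statement that the skew-symmetry of $\mathcal{A}$ makes the standard $\dot H^{1/2}$ theory apply verbatim with $\varepsilon$-independent constants. Your write-up is precisely that argument, so there is nothing to compare beyond observing that you have spelled out what the paper leaves to the reference.
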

Theorem \ref{thm:FK_applied} states that there exist always local, strong solutions for the system \eqref{perturbed BSSQ}, moreover if the $ \dot{H}^{1/2}$ initial data is small with respect to the viscosities characterizing the system (namely if the constant $ \tilde{c} $ in Theorem \ref{thm:FK_applied} is small) the solution is global. \\

The blow-up condition \eqref{eq:BU_condition} gives already an insight on how to connect the critical homogeneous Sobolev regularity $ \cFFSHud $ with its non-homogeneous counterpart. Indeed by interpolation of Sobolev spaces we can argue that
\begin{equation*}
\left\| U^\varepsilon \left( \tau \right) \right\|_{\dot{H}^1 \left( \R^3 \right)}^4 \leqslant 
\left\| U^\varepsilon \left( \tau \right) \right\|_{\cFFSHud}^2
\left\| \nabla U^\varepsilon \left( \tau \right) \right\|_{\cFFSHud}^2,
\end{equation*}
whence to control the $ \Eud $ norm impose a control on the blow-up condition \eqref{eq:BU_condition}. 
 The following result stems in a relatively simple way from Theorem \ref{thm:FK_applied}:
\begin{cor}\label{cor:FK}
Let $ U_0 \in H^s \left( \R^3 \right), \ s\geqslant 1/2 $, then the unique solution $ U^\varepsilon $ identified by Theorem \ref{thm:FK_applied} satisfies the following inequality, for each $ t \in \left( 0, T_{U_0} \right) $:
\begin{equation*}
\left\| U^\varepsilon \left( t \right) \right\|_{H^s \left( \R^3 \right)}^2
+c \int_0^t \left\| \nabla U^\varepsilon \left( \tau \right) \right\|_{H^s \left( \R^3 \right)}^2 \d \tau 
\leqslant C 
\left\| U_0 \right\|_{H^s \left( \R^3 \right)}^2 \exp \set{  \frac{C}{\nu^3} \int_0^{t} \left\| U^\varepsilon \left( \tau \right) \right\|_{\dot{H}^1 \left( \R^3 \right)}^4 \d \tau }.
\end{equation*}
\end{cor}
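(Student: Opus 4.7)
The plan is to split $H^s=L^2\cap \cFFSHs$ (valid for $s\geq 0$) and derive energy inequalities at each level, then recombine. Three structural cancellations persist at both levels and drive the whole argument: (i) the Leray-pressure contribution is killed by $\dive u^\varepsilon=0$ together with the fact that $\mathbb{P}$ commutes with $(-\Delta)^{s/2}$; (ii) the skew-symmetry of $\mathcal{A}$ gives $\left((-\Delta)^{s/2}\mathcal{A}U^\varepsilon\,\big|\,(-\Delta)^{s/2}U^\varepsilon\right)_{L^2}=0$, so the entire $1/\varepsilon$ singular term disappears; (iii) the diffusion $-\mathbb{D}$ produces the dissipation $\geq c\,\|\nabla U^\varepsilon\|^2$ with $c=\min\{\nu,\nu'\}$. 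None of these is affected by the presence of the fast oscillation generated by $\mathcal{A}$, which is precisely what makes the $\varepsilon$-uniform estimate possible.

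First, at the $L^2$ level the divergence-free condition also annihilates the trilinear transport $(u^\varepsilon\cdot\nabla U^\varepsilon\,|\,U^\varepsilon)_{L^2}$, yielding the textbook energy inequality
\begin{equation*}
\|U^\varepsilon(t)\|_{L^2}^2+2c\int_0^t\|\nabla U^\varepsilon(\tau)\|_{L^2}^2\,\d\tau\leq \|U_0\|_{L^2}^2,
\end{equation*}
which controls the low-frequency part of the $H^s$-norm.

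Second, I would apply $(-\Delta)^{s/2}$ to \eqref{perturbed BSSQ} and test against $(-\Delta)^{s/2}U^\varepsilon$ in $L^2$. The same three cancellations survive, so only the trilinear term is left and one obtains
\begin{equation*}
\tfrac{1}{2}\tfrac{d}{dt}\|U^\varepsilon\|_{\cFFSHs}^2+c\,\|\nabla U^\varepsilon\|_{\cFFSHs}^2\leq \left|\left(u^\varepsilon\cdot\nabla U^\varepsilon\,\big|\,U^\varepsilon\right)_{\cFFSHs}\right|.
\end{equation*}
The core of the proof is then the trilinear bound
\begin{equation*}
\left|\left(u^\varepsilon\cdot\nabla U^\varepsilon\,\big|\,U^\varepsilon\right)_{\cFFSHs}\right|\leq C\,\|U^\varepsilon\|_{\dot{H}^1(\R^3)}\,\|U^\varepsilon\|_{\cFFSHs}^{1/2}\,\|\nabla U^\varepsilon\|_{\cFFSHs}^{3/2},
\end{equation*}
which for $1/2\leq s<3/2$ follows at once from Lemma \ref{lem:prod_rules_Sobolev} applied to $u^\varepsilon\otimes U^\varepsilon$ with $(s_1,s_2)=(1,s)$, together with the interpolation $\|U^\varepsilon\|_{\dot{H}^{s+1/2}(\R^3)}^2\leq \|U^\varepsilon\|_{\cFFSHs}\|\nabla U^\varepsilon\|_{\cFFSHs}$. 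Young's inequality with exponents $(4/3,4)$ absorbs $\tfrac{c}{2}\|\nabla U^\varepsilon\|_{\cFFSHs}^2$ into the dissipative term on the left and leaves a remainder of the form $\tfrac{C}{\nu^3}\|U^\varepsilon\|_{\dot{H}^1(\R^3)}^4\|U^\varepsilon\|_{\cFFSHs}^2$ on the right; Grönwall's lemma and addition of the $L^2$ bound then yield the claimed inequality.

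The main obstacle is the trilinear estimate itself, which is routine for $s<3/2$ but requires extra care for $s\geq 3/2$, where Lemma \ref{lem:prod_rules_Sobolev} no longer applies directly. In that regime I would either perform a Bony paraproduct decomposition of $u^\varepsilon\cdot\nabla U^\varepsilon$ (arranging that the critical factor is always carried by the $\dot{H}^1$ norm, precisely to preserve the scaling above), or, equivalently, invoke a Kato--Ponce commutator estimate: since $\left(u^\varepsilon\cdot\nabla(-\Delta)^{s/2}U^\varepsilon\,\big|\,(-\Delta)^{s/2}U^\varepsilon\right)_{L^2}=0$ still holds by $\dive u^\varepsilon=0$, only the commutator $\left([(-\Delta)^{s/2},u^\varepsilon]\cdot\nabla U^\varepsilon\,\big|\,(-\Delta)^{s/2}U^\varepsilon\right)_{L^2}$ survives, and it admits a bound with the same scaling. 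Beyond this paraproduct bookkeeping the argument reduces to exactly the regularity-propagation scheme available for 3D \NS: the skew-symmetric $\mathcal{A}$ and the Leray pressure structure simply leave $\mathbb{D}$ as the only dynamically relevant linear contributor.
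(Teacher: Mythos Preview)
Your proposal is correct and arrives at the same trilinear estimate and the same Gr\"onwall structure as the paper, but the route to that trilinear bound differs. The paper works entirely with dyadic blocks: it localizes with $\cFFStq$ for $q\geqslant 0$, applies the Bony decomposition \eqref{eq:Bonydec} to $\cFFStq(u\otimes u)$, and controls each paraproduct piece via H\"older together with the embeddings $\dot H^1\hookrightarrow L^6$, $\dot H^{1/2}\hookrightarrow L^3$ and the characterization \eqref{eq:DeltaqHshom}; this yields the bound uniformly for all $s>0$ in one pass. You instead invoke Lemma~\ref{lem:prod_rules_Sobolev} directly in the range $1/2\leqslant s<3/2$ and defer the paraproduct (or a Kato--Ponce commutator) to $s\geqslant 3/2$. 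Your route is more elementary at the critical index $s=1/2$, while the paper's is more uniform in $s$ and avoids any case distinction. One small imprecision: with the pairing written as $-(u^\varepsilon\otimes U^\varepsilon\,|\,\nabla U^\varepsilon)_{\cFFSHs}$, the indices $(s_1,s_2)=(1,s)$ feed Lemma~\ref{lem:prod_rules_Sobolev} into $\dot H^{s-1/2}$, which does not match directly; the clean way is either to apply the lemma to $u^\varepsilon\cdot\nabla U^\varepsilon$ with $(1,s)$ after the duality split $\|u^\varepsilon\cdot\nabla U^\varepsilon\|_{\dot H^{s-1/2}}\|U^\varepsilon\|_{\dot H^{s+1/2}}$, or to apply it to $u^\varepsilon\otimes U^\varepsilon$ with $(1,s+\tfrac12)$, which restricts the direct argument to $s<1$. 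Either fix is immediate and does not affect the rest of your argument.
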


The proof of Corollary \ref{cor:FK} is rather simple, but it involves the tools of paradifferential calculus and Bony decomposition, for this reason is postponed to Section \ref{paradiffcalcul_FFS}.\\

Whence the blow-up condition \eqref{eq:BU_condition} which is calibrated in the critical homogeneous Sobolev setting suffices to determinate non-homogeneous subcritical regularity, and the maximal lifespan in Corollary \ref{cor:FK} is hence the same one as in Theorem \ref{thm:FK_applied}. In what follows it suffices  hence that we focus on the propagation of homogeneous critical Sobolev regularity. \\

Let us introduce now the following two-dimensional, vertically stratified \NS\ system:

\begin{equation}\label{eq:2DNS_stratified_1}
\left\lbrace
\begin{aligned}
& \partial_t \uh  \left( x_h, x_3 \right) + \uh \left( x_h, x_3 \right) \cdot \nh \uh \left( x_h, x_3 \right) - \nu \Delta \uh \left( x_h, x_3 \right) = -\nh \bar{p} \left( x_h, x_3 \right)\\
& \diveh \uh  \left( x_h, x_3 \right) =0,\\
& \left. \uh \left( x_h, x_3 \right) \right|_{t=0}=\mathbb{P}_0 \ U_0 \left( x_h, x_3 \right)= \uh_0 \left( x_h, x_3 \right).
\end{aligned}
\right.
\end{equation}
Such system will be studied in detail in Section \ref{sec:gwp_limit} and it represents the non-oscillating part of the solutions of \eqref{perturbed BSSQ} in the limit $ \varepsilon\to 0 $.\\

A question of great importance in the study  of hydro-dynamical systems is whether three-dimensional hydro-dynamical flows admit classical solutions which are globally well-defined. For two-dimensional systems the answer is affirmative and it is known since the classical works \cite{Lad58} and \cite{LionsProdi}. In dimension three the question of global solvability for generic large data remain unsolved. Nonetheless there exist many three-dimensional systems which admit global-in-time solution of strong type for arbitrary data, notably geophysical fluids \cite{monographrotating} belong to such category due to the constraining effects of the rotation of the Earth. The system \eqref{perturbed BSSQ} can be studied with the methodologies characterizing such discipline and we prove the following result:

\begin{theorem}\label{thm:main_result}
Let $ U_0 \in H^{\frac{1}{2}}\left( \R^3 \right)$, such that $ \oh=-\partial_2 u^1_0 + \partial_1 u^2_0\in {\cFFSLtwo} $, there exists a $ \varepsilon_0 > 0 $ such that for each $ \varepsilon \in \left( 0, \varepsilon_0 \right) $ the unique local solution $ U^\varepsilon $ of \eqref{perturbed BSSQ} is in fact global and belongs to the space
\begin{equation*}
U^\varepsilon \in L^\infty \left( \R_+; \cFFSHud \right)\cap L^2 \left( \R_+; \dot{H}^{\frac{3}{2}} \left( \R^3 \right) \right)= \Eud.
\end{equation*}
 Moreover as $ \varepsilon \to 0 $ the following convergence takes place,
\begin{align*}
U^\varepsilon - W^\varepsilon - \left( \uh, 0, 0 \right)^\intercal \xrightarrow{\varepsilon\to0} 0, &&
\text{in the space } \Eud,
\end{align*}
where $ U^\varepsilon $ is the strong solution identified by Theorem \ref{thm:FK_applied} and $ W^\varepsilon, \uh $ are respectively the unique global solutions (in the space $ \Eud $) of 
\begin{equation}
\label{eq:free_wave_total_initial_data}
\left\lbrace
\begin{aligned}
& \partial_t W^\varepsilon -\mathbb{D}W^\varepsilon + \frac{1}{\varepsilon} \PA W^\varepsilon =
\left( 
\begin{array}{c}
0\\
0\\
-\partial_3 \left( -\Dh \right)^{-1}\diveh \left( \uh\cdot \nh \uh \right)\\
0
\end{array}
 \right)
, \\
& \left. W^\varepsilon\right|_{t=0}=\left( \PP_{-, \varepsilon}+\mathbb{P}_{+, \varepsilon} \right)U_0,
\end{aligned}
\right.
\end{equation}
 and \eqref{eq:2DNS_stratified_1}. The operators $ \PP_{\pm, \varepsilon} $ are defined in \eqref{eq:projectors}.  
\end{theorem}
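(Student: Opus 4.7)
My plan is to decompose the solution into a ``slow'' two-dimensional stratified Navier--Stokes part $\bar U = (\uh, 0, 0)^{\intercal}$, a ``fast'' dispersive part $W^\varepsilon$ solving the linear penalized system, and a remainder $\delta^\varepsilon = U^\varepsilon - W^\varepsilon - \bar U$ which I must show goes to zero in $\Eud$. The global existence of $U^\varepsilon$ for $\varepsilon$ small will be obtained as a byproduct: if $\|\delta^\varepsilon\|_{\Eud}$ is controlled uniformly in time and $W^\varepsilon, \bar U \in \Eud$, then the blow-up condition \eqref{eq:BU_condition} in Theorem \ref{thm:FK_applied} cannot trigger, so $T_{U_0} = +\infty$. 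The argument is therefore a continuation/bootstrap scheme calibrated at the critical $\cFFSHud$ regularity.

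\textbf{Step 1: the slow part.} I would first solve \eqref{eq:2DNS_stratified_1} globally. Since $x_3$ appears only as a parameter, for a.e. $x_3$ one has a two-dimensional Navier--Stokes system in $x_h$ with initial vorticity $\oh(\cdot, x_3) \in L^2(\R^2_h)$, so classical Ladyzhenskaya--Lions--Prodi theory gives global smoothness in $x_h$. Integrating in $x_3$ (and using anisotropic Sobolev interpolation between $\|U_0\|_{\cFFSHud}$ and $\|\oh\|_{\cFFSLtwo}$) produces $\uh \in L^\infty(\R_+; \cFFSHud) \cap L^2(\R_+; \dot H^{3/2}(\R^3))$, together with global bounds on the forcing $\partial_3 (-\Dh)^{-1} \diveh (\uh \cdot \nh \uh)$ appearing on the right of \eqref{eq:free_wave_total_initial_data}. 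A dedicated section would certainly treat this; I would state it as a lemma before entering the main estimate.

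\textbf{Step 2: the fast part.} Since $\PA$ is skew-symmetric, the penalized semigroup $e^{-t\PA/\varepsilon}$ is isometric on $L^2$-based spaces, and $\mathbb{D}$ provides the parabolic smoothing uniformly in $\varepsilon$. Thus $W^\varepsilon$ is globally well-defined in $\Eud$ by standard linear theory, with a bound independent of $\varepsilon$ thanks to Step 1. The crucial additional input is a dispersive (Strichartz-type) estimate: after computing the eigenvalues of $i\xi \PA(\xi)$ one reads off the dispersion relation for stratified waves and deduces that $W^\varepsilon \to 0$ as $\varepsilon \to 0$ in suitable anisotropic Lebesgue norms of the form $L^p(\R_+; L^q_h(L^r_v))$, at least for frequencies localized in $\{0 < r \leqslant |\xi| \leqslant R,\; |\xi_h| \geqslant r\}$. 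A frequency-truncation and time-cutoff of the initial data, as in the Chemin--Charve strategy, reduces matters to this localized regime up to a small error.

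\textbf{Step 3: equation for $\delta^\varepsilon$ and bootstrap.} The choice of forcing in \eqref{eq:free_wave_total_initial_data} is engineered precisely so that $\bar U + W^\varepsilon$ is an approximate solution of \eqref{perturbed BSSQ} modulo a source built from interactions of the fast part with itself and with $\bar U$. Consequently $\delta^\varepsilon$ satisfies an equation of the schematic form
\begin{equation*}
\partial_t \delta^\varepsilon - \mathbb{D}\delta^\varepsilon + \tfrac{1}{\varepsilon}\PA \delta^\varepsilon + \PP\bigl(\delta^\varepsilon \cdot \nabla \delta^\varepsilon\bigr) = \PP\bigl(F^\varepsilon(\bar U, W^\varepsilon) + L^\varepsilon(\bar U, W^\varepsilon, \delta^\varepsilon)\bigr),
\end{equation*}
where $F^\varepsilon$ collects quadratic interactions of $W^\varepsilon$ and of $W^\varepsilon$ with $\bar U$, and $L^\varepsilon$ is linear in $\delta^\varepsilon$ with coefficients depending on $\bar U + W^\varepsilon$. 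Testing against $(-\Delta)^{1/2}\delta^\varepsilon$, using Bony's paraproduct decomposition together with Lemma \ref{lem:prod_rules_Sobolev} and anisotropic Sobolev embeddings, one absorbs $\|\delta^\varepsilon\|^2_{\dot H^1}$ by the viscous term when $\|\delta^\varepsilon\|_{\cFFSHud}$ is small, while the forcing $F^\varepsilon$ is made $o_\varepsilon(1)$ in $L^2_t \dot H^{-1/2}$ via the Strichartz estimate of Step 2. A standard bootstrap on $T^\star_\varepsilon = \sup\{T : \|\delta^\varepsilon\|_{L^\infty_T \cFFSHud} \leqslant \eta\}$ with $\eta$ small enough shows $T^\star_\varepsilon = +\infty$ for $\varepsilon < \varepsilon_0$, which yields simultaneously the global existence and the convergence.

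\textbf{Main obstacle.} The hardest point is controlling the \emph{slow--fast} interaction term $\PP(\uh \cdot \nabla W^\varepsilon + W^\varepsilon \cdot \nabla \uh)$ in the critical norm $L^2_t \dot H^{-1/2}$. Unlike pure fast--fast interactions, which are purely dispersive, here one coefficient does not oscillate, so one must combine the anisotropic structure of $\uh$ (two-dimensional, parametrized by $x_3$) with the Strichartz decay of $W^\varepsilon$, exploiting that the resonant set $\{\xi_h = 0\}$ corresponds exactly to modes killed by the projection $\mathbb{P}_0$ defining $\uh$. Making this orthogonality precise, and quantifying the decay in $\varepsilon$ at the level of the $\dot H^{1/2}$ energy, is the technical core of the proof.
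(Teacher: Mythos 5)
Your overall architecture coincides with the paper's: decompose $U^\varepsilon=\bar U+W^\varepsilon+\delta^\varepsilon$, prove $L^p_t L^\infty_x$ Strichartz decay of order $\varepsilon^{1/(4p)}$ for the frequency-localized oscillating part, run a $\cFFSHud$ bootstrap on the remainder so that the blow-up criterion \eqref{eq:BU_condition} cannot trigger, and finally remove the frequency truncation (and let the smallness parameter go to zero) to get the convergence. Two remarks on the execution. First, the paper actually runs the bootstrap on the truncated remainder $\drr=U^\varepsilon-\Wrr-\bar U$, not on $\delta^\varepsilon$ itself, because both the dispersive estimates of Theorem \ref{thm:Strichartz_est} and the boundedness of the projectors $\PP_{\pm,\varepsilon}$ hold only on $\Crr$ with constants $C_{r,R}$; the passage from $\drr$ to $\delta^\varepsilon$ is done at the very end by dominated convergence. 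Second, the "main obstacle" you single out is milder than you fear: since $\Wrr$ is frequency-localized, the slow--fast terms $\uh\cdot\nh\Wrr$ and $\Wrr\cdot\nabla\uh$ are estimated in Lemma \ref{lem:bilinear bounds} by plain H\"older and Bernstein inequalities with the factor $\left\| \Wrr \right\|_{L^\infty(\R^3)}$, whose time integral is $O(\varepsilon^{1/4})$ by Strichartz; no resonance or orthogonality structure at $\set{\xi_h=0}$ is needed.

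The genuine gap is your Step 1. In \eqref{eq:2DNS_stratified_1} the dissipation is the full three-dimensional Laplacian, so $x_3$ is \emph{not} a mere parameter: the slices are coupled through $\nu\partial_3^2\uh$, and the equation cannot be solved slice by slice by Ladyzhenskaya--Lions--Prodi. Moreover, even accepting the slicing heuristic, "integrating in $x_3$" does not produce what the rest of the proof needs, namely uniform-in-$x_3$ (that is $L^\infty_v$) control and the propagation of genuinely three-dimensional $H^s$ regularity; note also that the hypothesis only gives $\oh_0\in\cFFSLtwo$, not a slice-wise $L^2(\R^2_h)$ bound uniform in $x_3$. The paper replaces this step by an anisotropic argument (Lemmas \ref{lem:Ler_sol_ubar_omega}--\ref{lem:bound_anisotropic_norm} and Proposition \ref{prop:strong_reg_ubar}): from the $L^2$ energy bounds on $\uh$ and on the vorticity $\oh$, the one-dimensional interpolation $\norm{f}_{L^\infty}\lesssim\norm{f}_{L^2}^{1/2}\norm{f'}_{L^2}^{1/2}$ and the Biot--Savart relation $\nh\uh\sim\oh$ yield $\uh\in L^4\left(\R_+;L^4_h\left(L^\infty_v\right)\right)\cap L^4\left(\R_+;L^\infty_v\left(L^4_h\right)\right)$, and it is precisely this norm which closes the $H^s$ energy estimate by Gronwall and gives the global bounds used both for the forcing in \eqref{eq:free_wave_total_initial_data} and in the bootstrap. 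Your sketch would have to be rebuilt along these lines for the scheme to go through.
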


\begin{rem}
We point out that Theorem \ref{thm:main_result} is composed of two main statements:
\begin{enumerate}
\item global well-posedness in the energy space $ \Eud $ for positive, small $ \varepsilon $,

\item convergence as $ \varepsilon \to 0 $ in the space $ \Eud $ to the solutions of a suitable limit system,
\end{enumerate}
we prove at first the global well-posedness, and subsequently, thanks to the theory developed in order to prove such result, we prove the convergence result. \fine
\end{rem}

To prove Theorem \ref{thm:main_result} we  proceed as follows:

\begin{itemize}
\item 
In Section \ref{sec:spectral_analysis} we perform a careful spectral analysis of the linear operator $ L_\varepsilon = \PA - \varepsilon \mathbb{D} $, where $ \mathbb{P} $ is the Leray projector onto the first three-components and the identity on the fourth and $ \mathcal{A}, \ \mathbb{D} $ are defined in \eqref{matrici}. Such analysis will be of great relevance in the Sections \ref{sec:dispersive_properties} and \ref{sec:gwp_limit}.

\item
In Section \ref{sec:gwp_limit} we prove that the system \eqref{eq:2DNS_stratified} is globally well posed in $ \Es, \ s\geqslant 0 $. The system \eqref{eq:2DNS_stratified} is the system to whom \eqref{perturbed BSSQ} approaches as $ \varepsilon\to 0 $.

\item
In Section \ref{sec:dispersive_properties} we study the linear  system \eqref{eq:free_wave_CFFS}. The initial data of \eqref{eq:free_wave_CFFS} is considered to be in what we denote as the oscillating subspace of $ L_\varepsilon $, which is introduced at the end of Section \ref{sec:spectral_analysis}, and moreover is localized in a a set $ \Crr $ (see \eqref{eq:CrR})  of the frequency space which makes his evolution to be described by an oscillating integral with no stationary phase. This observation is hence the key observation which allows us to prove some adapted dispersive estimates on the solutions of \eqref{eq:free_wave_CFFS}.

\item
In Section \ref{sec:bootstrap} we prove the global well-posedness part of Theorem \ref{thm:main_result}, i.e. we prove that for $ \varepsilon $ sufficiently small the solution $ U^\varepsilon $ of \eqref{perturbed BSSQ} belongs to the space $ \Eud $. To do so we perform a bootstrap argument on the function $ \drr= U^\varepsilon -\Wrr-\bar{U} $ which requires the use of the dispersive estimates performed in Section \ref{sec:dispersive_properties}.

\item 
Finally in Section \ref{sec:main_result} we prove the convergence part of the statement of Theorem \ref{thm:main_result}.
\end{itemize}

\begin{rem}
All along the paper we shall denote with $ C $ a generic positive constant, independent by any parameter. Such value may differ from line to line. The positive constant $ C_{r, R} $ depends instead from the parameter $ 0<r<R $, and 
\begin{equation*}
C_{r, R}\leqslant C \left( 1+ \frac{R^N}{r^N} \right),
\end{equation*}
for some positive and finite  $ N\in \NN $.\fine
\end{rem}

\subsection{Physical derivation of the system \eqref{perturbed BSSQ} and related results}\label{sec:physics}
 The system describing the motion of a fluid with variable density under the effects of (external) gravitational force is (see \cite{cushman2011introduction})
\begin{equation}\label{eq:physical_syst}
\left\lbrace
\begin{aligned}
&   \partial_t u^1 + {u}\cdot \nabla u^1   &&\;= - \frac{1}{\rho_0}\partial_1 p + \nu \Delta u^1,\\
&   \partial_t  u^2 + {u}\cdot \nabla u^2   &&\;= -\frac{1}{\rho_0} \partial_2 p + \nu \Delta u^2,\\
&   \partial_t  u^3 + {u}\cdot \nabla u^3
 + \frac{ \text{g}\; \rho }{\rho_0}
   &&\;= -\frac{1}{\rho_0} \partial_3 p + \nu \Delta u^3 ,\\
& \partial_t \rho + {u}\cdot \nabla \rho
 && \;=\; \kappa \Delta \rho,\\
& \dive {u}=0.
\end{aligned}
\right.
\end{equation}
%The system \eqref{eq:physical_syst} presents already many physical simplifications which were made implicitly, such as the incompressibility condition $ \dive {u}=0 $ and the \textit{Boussinesq approximation}:
% $$
% \text{Fluid density } = \ \bar{\rho} \left( x_3 \right) + \rho \left( t, x \right), \hspace{1.5cm} |\rho|\ll |\bar{\rho}|.
% $$

The system \eqref{eq:physical_syst} is derived considering  a density-dependent incompressible fluid whose only external force acting on it is the gravity. The fluid density $ \rho $ is linearized around a vertically decreasing density profile
\begin{equation}\label{intro:eq:BH1}
\rho \left( t, x \right)= \rho_0 + \bar{\rho} \left( x_3 \right) + \rho' \left( t, x \right),
\end{equation}
with
\begin{equation}\label{intro:eq:BH2}
\left|  \rho' \right| \ll 1.
\end{equation}

The \NS\ incompressible equations with hypothesis \eqref{intro:eq:BH1}--\eqref{intro:eq:BH2} and Boussinesq approximation read hence as

\begin{equation}\label{intro:eq:strat_fluids1}
\left\lbrace
\begin{aligned}
& \partial_t u + u\cdot \nabla u && = && -\frac{1}{\rho_0} \nabla p +\frac{1}{\rho_0} \ \nu \Delta u - 
\left( 
\begin{array}{c}
0 \\ 0 \\
\frac{\rho}{\rho_0}\ {g} 
\end{array}
 \right)
,\\
& \partial_t \rho' + u \cdot \nabla \rho' + u^3 \partial_3 \bar{\rho} && = && \kappa \Delta \rho' + \kappa \partial_3^2 \bar{\rho},\\
&\dive u=0. 
\end{aligned}
\right.
\end{equation}

Let us define the Brunt-V\"ais\"al\"a stratification frequency as (see \cite{cushman2011introduction}):
\begin{equation*}
\R \ni N^2 = -\frac{g}{\rho_0}\ \partial_3 \bar{\rho} \ \Longrightarrow \ \partial_3^2\bar{\rho}\approx 0,
\end{equation*}
equation \eqref{intro:eq:strat_fluids1} becomes (denoting $ \rho' $ as $ \rho $)
\begin{equation}
\tag{\ref{eq:physical_syst}}
\left\lbrace
\begin{aligned}
&   \partial_t u^1 + {u}\cdot \nabla u^1   &&\;= - \frac{1}{\rho_0}\partial_1 p + \nu \Delta u^1,\\
&   \partial_t  u^2 + {u}\cdot \nabla u^2   &&\;= -\frac{1}{\rho_0} \partial_2 p + \nu \Delta u^2,\\
&   \partial_t  u^3 + {u}\cdot \nabla u^3
 + \frac{ \text{g}\; \rho }{\rho_0}
   &&\;= -\frac{1}{\rho_0} \partial_3 p + \nu \Delta u^3 ,\\
& \partial_t \rho + {u}\cdot \nabla \rho
- \frac{\rho_0 \; N^2}{\text{g}}\; u^3
 && \;=\; \kappa \Delta \rho,\\
& \dive {u}=0.
\end{aligned}
\right.
\end{equation}
% \begin{equation}\label{eq:physical_syst}
% \left\lbrace
% \begin{aligned}
% & \partial_t u + u\cdot \nabla u && = && -\frac{1}{\rho_0} \nabla p +\frac{1}{\rho_0} \ \nu \Delta u - 
% \left( 
% \begin{array}{c}
% 0 \\ 0 \\
% \frac{\rho}{\rho_0}\ {g} 
% \end{array}
 % \right)
% ,\\
% & \partial_t \rho + u \cdot \nabla \rho  && = && \kappa \Delta \rho + \frac{\rho_0}{g}N^2 \ u^3,\\
% &\dive u=0. 
% \end{aligned}
% \right.
% \end{equation}

\noindent Equation \eqref{eq:physical_syst} describe hence the dynamics of a density-dependent fluid under the sole  hypothesis that the variation of density is small and the density increase with depth.\\

\noindent Let us now nondimensionalize equations \eqref{eq:physical_syst}, defining
\begin{equation*}
\left\lbrace
\begin{aligned}
& L = \text{ standard legth-scale of the system},\\
& U = \text{ standard velocity of the flow},\\
& T_N = N^{-1},\\
& T = L/U, \\
& \Fr = T_N /T,
\end{aligned}
\right.
\end{equation*}
we can define the following adimensional unknowns:
\begin{equation*}
\left\lbrace
\begin{aligned}
& t^\star = t/T,\\
& x^\star = x/L,\\
& u^\star = u/U, \\
& p^\star = \frac{p}{\rho \ U^2},\\
& \rho^\star = \frac{g}{\rho_0 NU} \ \rho.
\end{aligned}
\right.
\end{equation*}

\noindent The dimensionless number Fr is the \textbf{Froude number} as is was defined above. The equations \eqref{eq:physical_syst} in nondimensional form becomes
\begin{equation*}
\left\lbrace
\begin{aligned}
& \partial_t u^\star + u^\star \cdot \nabla u^\star - \nu^\star \Delta u^\star  
+
\left( \begin{array}{c}
0 \\ 0\\ \displaystyle\frac{1}{\Fr}\ \rho^\star
\end{array} \right)=-\nabla p^\star,\\
& \partial_t \rho^\star + u^\star \cdot \nabla \rho^\star -\kappa^\star \Delta \rho^\star - \frac{1}{\Fr} \ u^{3, \star} =0,\\
& \dive u^\star=0,
\end{aligned}
\right.
\end{equation*}
where $ \nu^\star $ and $ \kappa^\star $ are modified kinematic viscosities. Setting $ \Fr=\varepsilon $ we hence derived the system 
\begin{equation} \tag{PBS$_\varepsilon$}
\left\lbrace
\begin{aligned}
&\partial_t u^\varepsilon  + u^\varepsilon \cdot\nabla u^\varepsilon  -\nu \Delta  u^\varepsilon -\displaystyle\frac{1}{\varepsilon} \rho^\varepsilon \overrightarrow{e}_3 &=& -\displaystyle \frac{1}{\varepsilon} \nabla \Phi^\varepsilon,\\
&\partial_t \rho^\varepsilon + u^\varepsilon \cdot\nabla \rho^\varepsilon  -\nu' \Delta \rho^\varepsilon + \displaystyle\frac{1}{\varepsilon} u^{3,\varepsilon} & =& \;0,\\
&\dive u^\varepsilon =\;0.
\end{aligned}
\right.
\end{equation}

The system \eqref{perturbed BSSQ} falls into a wider category of mathematical problems known as \textit{singular perturbation problems}. The idea behind this kind of problems is that, once we have an external, linear, force acting on a system such as in \eqref{perturbed BSSQ}, such force with great magnitude will constraint the motion of the system described. This kind of rigidity can hence be used in order to prove that suitable three-dimensional hydrodynamical flows are globally well-posed without any smallness assumption on the initial data.\\

Another system which falls in the category of singular perturbation problems is the system describing the motion of a flow under the effect of a strong horizontal rotation, namely what is known as the Navier-Stokes-Coriolis equations:
\begin{equation}\label{eq:NSC}
\tag{NSC$_\varepsilon$}
\left\lbrace
\begin{aligned}
& \partial_t v^\varepsilon_{\text{RF}} + v^\varepsilon_{\text{RF}} \cdot \nabla v^\varepsilon_{\text{RF}} - \nu \Delta v^\varepsilon_{\text{RF}} + \frac{e^3 \wedge v^\varepsilon_{\text{RF}}}{\varepsilon }= - \frac{1}{\varepsilon}\nabla p^\varepsilon_{\text{RF}},\\
& \dive v^\varepsilon_{\text{RF}}=0,\\
& \left. v^\varepsilon_{\text{RF}}\right|_{t=0}=v_{\text{RF}, 0}.
\end{aligned}
\right.
\end{equation}
In the case in which the spatial domain is the full three-dimensional space $ \R^3 $, if the initial data is of the form
\begin{equation*}
v_{\text{RF}, 0}= \uh_{2 \text{D}, 0} + \tilde{u}_{3 \text{D}, 0} ,
\end{equation*}
where $ \uh_{2 \text{D}, 0} $ is a two-dimensional vector field it is
 proved in \cite{monographrotating} that
\begin{align*}
v^\varepsilon_{\text{RF}} - w^\varepsilon - \uh_{2\text{D}} \to & \ 0, & \text{in } L^\infty \left( \R_+; {\cFFSHud} \right),\\
\nabla\left( v^\varepsilon_{\text{RF}} - w^\varepsilon - \uh_{2\text{D}} \right) \to & \ 0, & \text{in } L^2 \left( \R_+; {\cFFSHud} \right),
\end{align*}
where $ w^\varepsilon $ is the global solution of the linear homogeneous equation associated to \eqref{eq:NSC} and $ \uh_{2\text{D}} $ is the global solution of the two-dimensional \NS\ equations.
\\
Many more are the works on global existence and convergence for the equation \eqref{eq:NSC}: in the whole space it was proved in \cite{CDGG2} a result of global existence and convergence in Sobolev spaces of anisotropic type in the case in which the vertical diffusivity is null. Such result is physically significant since experimental proof suggests that for fluids at a planetary scale the vertical diffusivity (Ekman number) tends to be very small, see \cite{cushman2011introduction} and \cite{Pedlosky87}. We mention as well \cite{ekmanwell}, \cite{ekmanperiodic} and \cite{ekmanill} for works describing rotating fluids between two parallel rigid layers with Dirichelet boundary conditions, \cite{VSN} for rotating fluids with zero vertical diffusivity and vanishing horizontal diffusivity and \cite{Dutrifoy2} for propagation of tangential regularity in rotating inviscid fluids.

\subsection{Dyadic decomposition}We recall that in $\RR^d$, with $d\in\NN^*$, for $R > 0$, the ball $\mathcal{B}_d(0,R)$ is the set $$\mathcal{B}_d(0,R) = \set{\xi \in \RR^d \;:\; \abs{\xi} \leq R}.$$ For $0 < r_1 < r_2$, we defined the annulus
$$\mathcal{A}_d(r_1,r_2) \stackrel{\text{\tiny def}}{=} \set{\xi \in \RR^d \;:\; r_1 \leq \abs{\xi} \leq r_2}.$$ Next, we recall the following Bernstein-type lemma, which states that Fourier multipliers act almost as homotheties on distributions whose Fourier transforms are supported in a ball or an annulus. We refer the reader to \cite[Lemma 2.1.1]{chemin_book} or \cite[Lemma 2.1]{bahouri_chemin_danchin_book} for a proof of this lemma.
\begin{lemma}
	\label{lemma:Bernstein}
	Let $k\in\NN$, $d \in \NN^*$ and $R, r_1, r_2 \in \RR$ satisfy $0 < r_1 < r_2$ and $R > 0$. There exists a constant $C > 0$ such that, for any $a, b \in \RR$, $1 \leq a \leq b \leq +\infty$, for any $\lambda > 0$ and for any $u \in L^a(\RR^d)$, we have
    \begin{equation}
        \label{eq:Bernstein1}
		\mbox{supp\,}\pare{\widehat{u}} \subset \mathcal{B}_d(0,\lambda R) \quad \Longrightarrow \quad \sup_{\abs{\alpha} = k} \norm{\dd^\alpha u}_{L^b} \leq C^k\lambda^{k+ d \pare{\frac{1}a-\frac{1}b}} \norm{u}_{L^a},
    \end{equation}
    \begin{equation}
        \label{eq:Bernstein2}
		\mbox{supp\,}\pare{\widehat{u}} \subset \mathcal{A}_d(\lambda r_1,\lambda r_2) \quad \Longrightarrow \quad C^{-k} \lambda^k\norm{u}_{L^a} \leq \sup_{\abs{\alpha} = k} \norm{\dd^\alpha u}_{L^a} \leq C^k \lambda^k\norm{u}_{L^a}.
    \end{equation}
\end{lemma}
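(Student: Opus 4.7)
The strategy is standard and relies on a Fourier-side localization combined with Young's convolution inequality and an elementary scaling computation. The key observation is that if $v$ is a distribution whose Fourier transform is supported in a fixed compact set, then $\dd^\alpha v = g_\alpha \ast v$ for a single fixed Schwartz kernel $g_\alpha$; after a rescaling of the frequency variable, the sole effect of the parameter $\lambda$ is to produce explicit powers of $\lambda$ via a change of variables.

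More concretely, for \eqref{eq:Bernstein1} pick $\chi \in C_c^\infty(\R^d)$ identically equal to $1$ on $\mathcal{B}_d(0,R)$. Since $\mbox{supp\,} \widehat u \subset \mathcal{B}_d(0,\lambda R)$, we have $\widehat u = \chi(\cdot/\lambda)\widehat u$, whence, for $|\alpha|=k$,
\[
\dd^\alpha u = g_{\alpha,\lambda} \ast u, \qquad g_{\alpha,\lambda} := \mathcal{F}^{-1}\bigl((i\xi)^\alpha \chi(\xi/\lambda)\bigr) = \lambda^{d+k}\, g_\alpha(\lambda\,\cdot),
\]
where $g_\alpha := \mathcal{F}^{-1}[(i\xi)^\alpha \chi] \in \mathcal{S}(\R^d)$. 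Young's inequality $\norm{f \ast g}_{L^b} \leq \norm{g}_{L^c}\norm{f}_{L^a}$ with $1+1/b = 1/a+1/c$, combined with the scaling identity $\norm{g_{\alpha,\lambda}}_{L^c} = \lambda^{k+d(1/a-1/b)}\norm{g_\alpha}_{L^c}$, yields precisely \eqref{eq:Bernstein1}. The upper bound in \eqref{eq:Bernstein2} follows verbatim by replacing $\chi$ with $\tilde\chi \in C_c^\infty(\mathcal{A}_d(r_1/2, 2r_2))$ identically $1$ on $\mathcal{A}_d(r_1, r_2)$.

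The lower bound in \eqref{eq:Bernstein2} is the only part requiring a genuinely different argument, and constitutes the main technical point. Observe that the polynomial $P(\xi) := \sum_{|\alpha|=k}\xi^{2\alpha}$ is homogeneous of degree $2k$ and strictly positive off the origin, since the multinomial identity $|\xi|^{2k} = \sum_{|\alpha|=k}\binom{k}{\alpha}\xi^{2\alpha}$ forces $P(\xi)\geq c_d |\xi|^{2k}$. Consequently the symbols
\[
m_\alpha(\xi) := \frac{\tilde\chi(\xi)\,\xi^\alpha}{P(\xi)}, \qquad |\alpha|=k,
\]
are smooth and compactly supported in $\mathcal{A}_d(r_1/2, 2r_2)$, and satisfy the inversion identity $\sum_{|\alpha|=k} m_\alpha(\xi)\xi^\alpha = \tilde\chi(\xi)$. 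Since $\mbox{supp\,}\widehat u \subset \mathcal{A}_d(\lambda r_1, \lambda r_2)$, we have $\widehat u = \tilde\chi(\cdot/\lambda)\widehat u$ and hence
\[
u = \sum_{|\alpha|=k}(-i)^k \lambda^{-k}\,\bigl[\mathcal{F}^{-1} m_\alpha(\cdot/\lambda)\bigr] \ast \dd^\alpha u.
\]
A last application of Young's inequality with an $L^1$ kernel, together with the scale-invariance $\|\mathcal{F}^{-1}[m_\alpha(\cdot/\lambda)]\|_{L^1} = \|\mathcal{F}^{-1} m_\alpha\|_{L^1}$, then yields the bound $\lambda^k \norm{u}_{L^a} \leq C^k \sup_{|\alpha|=k}\norm{\dd^\alpha u}_{L^a}$. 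The delicate step, which dictates the whole construction, is the inversion of the symbol $\xi^\alpha$ on an annulus via the family $\set{m_\alpha}$; the non-vanishing of $P$ away from the origin is what makes this possible, and it is ultimately a direct consequence of the multinomial expansion of $|\xi|^{2k}$.
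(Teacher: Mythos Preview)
Your argument is correct and is precisely the standard proof found in the references the paper cites (\cite[Lemma~2.1.1]{chemin_book} and \cite[Lemma~2.1]{bahouri_chemin_danchin_book}); the paper itself does not supply an independent proof but simply refers the reader to those sources. The only cosmetic point is the tracking of the constant in the form $C^k$ for the lower bound, which requires a short remark that the number of multi-indices of length $k$ and the $L^1$ norms $\norm{\mathcal{F}^{-1}m_\alpha}_{L^1}$ can be bounded geometrically in $k$, but this is routine.
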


In order to define the (non-homogeneous) dyadic partition of unity, we also recall the following proposition, the proof of which can be found in \cite[Proposition 2.1.1]{chemin_book} or \cite[Proposition 2.10]{bahouri_chemin_danchin_book}. 
\begin{prop}
	\label{pr:dyadic} Let $d \in \NN^*$. There exist smooth radial function $\chi$ and $\varphi$ from $\RR^d$ to $[0,1]$, such that
	\begin{gather}
		\label{eq:dyadic01} \mbox{supp\,}\chi \in \mathcal{B}_d\pare{0,\frac{4}3}, \quad \mbox{supp\,}\varphi \in \mathcal{A}_d\pare{\frac{3}4,\frac{8}3},\\
		\label{eq:dyadic02} \forall\, \xi \in \RR^3, \quad \chi(\xi) + \sum_{j\geqslant 0} \varphi(2^{-j}\xi) = 1,\\
		\label{eq:dyadic03_FFS} \abs{j-j'} \geqslant 2 \quad \Longrightarrow \quad \mbox{supp\,}\varphi(2^{-j}\cdot) \cap \mbox{supp\,}\varphi(2^{-j'}\cdot) = \varnothing,\\
		\label{eq:dyadic04_FFS} j \geqslant 1 \quad \Longrightarrow \quad \mbox{supp\,}\chi \cap \mbox{supp\,}\varphi(2^{-j}\cdot) = \varnothing.
	\end{gather}
	Moreover, for any $\xi \in \RR^d$, we have
	\begin{equation}
		\label{eq:dyadic05} \frac{1}2 \leqslant \chi^2(\xi) + \sum_{j\geqslant 0} \varphi^2(2^{-j}\xi) \leqslant 1.
	\end{equation}
\end{prop}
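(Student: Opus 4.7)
The strategy is to manufacture both $\chi$ and $\varphi$ out of a single radially decreasing plateau function. First I would fix a smooth, non-increasing profile $\theta:[0,\infty)\to[0,1]$ with $\theta\equiv 1$ on $[0,3/4]$ and $\theta\equiv 0$ on $[4/3,+\infty)$; this is standard (e.g.\ by mollifying a piecewise affine cutoff). Setting $\chi(\xi):=\theta(|\xi|)$ gives a smooth radial $[0,1]$-valued function supported in $\mathcal{B}_d(0,4/3)$, establishing the support condition on $\chi$ in \eqref{eq:dyadic01}. I then define
\begin{equation*}
\varphi(\xi):=\chi\!\left(\xi/2\right)-\chi(\xi),
\end{equation*}
which is smooth, radial, and non-negative since $\theta$ is non-increasing. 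A direct inspection shows $\varphi\equiv 0$ on $\{|\xi|\leqslant 3/4\}$ (both summands equal $1$) and on $\{|\xi|\geqslant 8/3\}$ (both vanish), so $\mathrm{supp}\,\varphi\subset\mathcal{A}_d(3/4,8/3)$, giving the second half of \eqref{eq:dyadic01}.

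The partition-of-unity identity \eqref{eq:dyadic02} then comes from telescoping: for any $\xi\in\RR^d$ and any $N\geqslant 0$,
\begin{equation*}
\chi(\xi)+\sum_{j=0}^{N}\varphi(2^{-j}\xi)=\chi(\xi)+\bigl[\chi(2^{-N-1}\xi)-\chi(\xi)\bigr]=\chi(2^{-N-1}\xi)\xrightarrow[N\to\infty]{}\chi(0)=1.
\end{equation*}
The disjointness statements \eqref{eq:dyadic03_FFS}--\eqref{eq:dyadic04_FFS} reduce to plain geometry of annuli: since $\mathrm{supp}\,\varphi(2^{-j}\cdot)\subset\mathcal{A}_d(2^{j}\cdot 3/4,\,2^{j}\cdot 8/3)$, two such shells for indices $j<j'$ can intersect only if $2^{j'}\cdot 3/4\leqslant 2^{j}\cdot 8/3$, i.e.\ $2^{j'-j}\leqslant 32/9<4$, forcing $j'-j\leqslant 1$; and for $j\geqslant 1$ the inner radius $2^{j}\cdot 3/4\geqslant 3/2$ already exceeds the outer radius $4/3$ of $\mathrm{supp}\,\chi$.

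It remains to verify \eqref{eq:dyadic05}, and this is where I would be most careful. The claim is that at any fixed $\xi$ \emph{at most two} summands in the partition can be non-zero: if $\chi(\xi)\neq 0$ then $|\xi|<4/3$, which by the previous bound forces $\varphi(2^{-j}\xi)=0$ for every $j\geqslant 1$, leaving only $\chi(\xi)$ and $\varphi(\xi)$; otherwise $\chi(\xi)=0$ and the support-separation bound permits at most two consecutive indices $j,j+1$ with $\varphi(2^{-j}\xi),\varphi(2^{-j-1}\xi)$ non-zero. Denoting these at most two non-zero values by $a_1,a_2\in[0,1]$ with $a_1+a_2=1$, the upper bound $a_1^2+a_2^2\leqslant a_1+a_2=1$ and the Cauchy--Schwarz lower bound $a_1^2+a_2^2\geqslant\tfrac{1}{2}(a_1+a_2)^2=\tfrac{1}{2}$ yield \eqref{eq:dyadic05}. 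The single delicate point of the plan lies in selecting the constants $3/4,4/3,8/3$ so that \emph{exactly two} terms overlap at any frequency: a wider transition in $\theta$ would inflate the overlap count and degrade the sharp constant $1/2$ in the lower bound.
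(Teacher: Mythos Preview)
Your construction is correct and is exactly the classical one: build $\chi$ from a monotone radial profile, set $\varphi=\chi(\cdot/2)-\chi$, telescope, and use the ``at most two non-zero terms'' observation together with Cauchy--Schwarz for \eqref{eq:dyadic05}. The paper does not give its own proof of this proposition; it simply refers the reader to \cite[Proposition~2.1.1]{chemin_book} and \cite[Proposition~2.10]{bahouri_chemin_danchin_book}, whose argument is the one you have written out.
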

The non-homogeneous dyadic blocks are defined as follows
\begin{defn}
	\label{de:dyadic} For any $d\in\NN^*$ and for any tempered distribution $u \in \mathcal{S}'(\RR^d)$, we set
    \begin{align*}
		&\cFFStq u = \mathcal{F}^{-1} \pare{\varphi(2^{-q}\abs{\xi}) \widehat{u}(\xi)}, &&\forall q \geqslant 0,\\
		& \triangle_{-1} u = 
		\mathcal{F}^{-1} \pare{\chi(2^{-1}\abs{\xi}) \widehat{u}(\xi)},\\
		& \triangle_{q} u \equiv 0 , &&\forall q \leqslant -2,\\
		&\cFFSSq u = \sum_{q' \leq q - 1} \Delta_{q'} u, &&\forall q\in \ZZ.
    \end{align*}
\end{defn}

\noindent Using the properties of $\psi$ and $\varphi$, for any tempered distribution $u \in \mathcal{S}'(\RR^d)$, one can formally write
$$u = \sum_{q} \cFFStq u \qquad\mbox{in},$$ and the homogeneous Sobolev spaces ${H}^s(\RR^d)$, with $s\in\RR$, can be characterized as follows
\begin{prop}
    \label{pr:Sobnormiso}
    Let $d\in\NN^*$, $s\in\RR$ and $u\in {H}^s(\RR^d)$. Then,
    \begin{equation*}
        \norm{u}_{{H}^s}  \sim \pare{\sum_{q} 2^{2qs} \norm{\cFFStq u}_{L^2}^2}^{\frac{1}2} = \left\| \left( 2^{qs}\left\| \cFFStq u \right\|_{L^2} \right)_{q\in \ZZ} \right\|_{\ell^2}.
    \end{equation*}
    Moreover, there exists a square-summable sequence of positive numbers $\left( c_q \right)_q$ with $\sum_q c_q^2 = 1$, such that 
	\begin{equation}
		\label{eq:DeltaqHs} \norm{\cFFStq u}_{L^2} \leq c_q(u) 2^{-qs} \norm{u}_{{H}^s}.
	\end{equation}
\end{prop}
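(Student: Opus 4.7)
The plan is to reduce the proposition to a direct computation in Fourier space using Plancherel's theorem together with the support properties of the dyadic partition of unity introduced in Proposition \ref{pr:dyadic}. The key observation is that the weight $(1+|\xi|^2)^s$ entering the definition of $H^s$ is, on each dyadic shell $\{|\xi| \sim 2^q\}$, comparable (up to universal constants depending only on $s$) to $2^{2qs}$ for $q \geq 0$, and is bounded from above and below by universal constants on the low-frequency ball carrying $\Delta_{-1}$. Hence multiplying $|\widehat{u}|^2$ by this weight and then localizing in frequency via $\chi^2$ and $\varphi^2(2^{-q}\cdot)$ must produce quantities comparable to $2^{2qs}\|\Delta_q u\|_{L^2}^2$.

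First I would write, by Plancherel,
\begin{equation*}
\|u\|_{H^s}^2 \;=\; \int_{\R^d}(1+|\xi|^2)^s |\widehat{u}(\xi)|^2\,\d\xi,
\qquad
\|\Delta_q u\|_{L^2}^2 \;=\; \int_{\R^d}\psi_q^2(\xi)|\widehat{u}(\xi)|^2\,\d\xi,
\end{equation*}
where $\psi_{-1}(\xi)=\chi(\xi/2)$ and $\psi_q(\xi)=\varphi(2^{-q}\xi)$ for $q\geq 0$. Then I would multiply the lower and upper bounds
\begin{equation*}
\tfrac12 \;\leq\; \chi^2(\xi)+\sum_{j\geq 0}\varphi^2(2^{-j}\xi)\;\leq\; 1
\end{equation*}
by $(1+|\xi|^2)^s|\widehat{u}(\xi)|^2$ and integrate. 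On $\mathrm{supp}\,\varphi(2^{-q}\cdot)$ one has $|\xi|\in[\tfrac34 2^q,\tfrac83 2^q]$, so $(1+|\xi|^2)^s$ differs from $2^{2qs}$ by a multiplicative constant depending only on $s$ (uniformly in $q\geq 0$), and similarly $(1+|\xi|^2)^s\sim 1 \sim 2^{2\cdot(-1)\cdot s}$ on $\mathrm{supp}\,\chi(\cdot/2)$. Extracting these factors outside the relevant integrals yields
\begin{equation*}
c_s\sum_{q\geq -1} 2^{2qs}\|\Delta_q u\|_{L^2}^2 \;\leq\; \|u\|_{H^s}^2 \;\leq\; C_s\sum_{q\geq -1} 2^{2qs}\|\Delta_q u\|_{L^2}^2,
\end{equation*}
which is the claimed equivalence of norms (remembering that $\Delta_q\equiv 0$ for $q\leq -2$ by Definition \ref{de:dyadic}).

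For the second assertion, the inequality \eqref{eq:DeltaqHs} is a direct consequence of the equivalence just proved: setting
\begin{equation*}
\widetilde c_q(u) \;=\; \frac{2^{qs}\|\Delta_q u\|_{L^2}}{\|u\|_{H^s}},
\end{equation*}
one has $\sum_q \widetilde c_q(u)^2 \leq C_s$ by the first part of the proposition, and then $c_q(u):=\widetilde c_q(u)/\bigl(\sum_{q'}\widetilde c_{q'}(u)^2\bigr)^{1/2}$ is a sequence of nonnegative numbers with $\sum_q c_q(u)^2 = 1$ satisfying the required bound (up to an absorbed constant). I do not expect any genuine obstacle here; the only mildly delicate points are handling the low-frequency block $\Delta_{-1}$ uniformly with the high-frequency blocks, and keeping track that the constants in $(1+|\xi|^2)^s\sim 2^{2qs}$ depend only on $s$ and not on $q$, which is immediate from the compactness of the supports in the annulus $\mathcal{A}_d(3/4,8/3)$.
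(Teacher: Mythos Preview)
Your argument is the standard textbook proof and is correct. The paper does not supply its own proof of this proposition; it is stated as a known characterization of $H^s$ in terms of dyadic blocks (the reference implicitly being \cite{bahouri_chemin_danchin_book} or \cite{chemin_book}), so there is nothing to compare against beyond noting that your approach is exactly the classical one.

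One small remark: in the paper's Definition~\ref{de:dyadic} the low-frequency block is $\widehat{\triangle_{-1}u}(\xi)=\chi(|\xi|/2)\widehat u(\xi)$, whereas the almost-orthogonality inequality \eqref{eq:dyadic05} is written with $\chi^2(\xi)$ rather than $\chi^2(\xi/2)$. This is a harmless notational inconsistency in the paper, not in your argument; since $\mathrm{supp}\,\chi\subset\mathcal{B}_d(0,4/3)\subset\{\chi(\cdot/2)\text{ bounded below}\}$, the two localizations are comparable on the relevant set and your estimate for the $q=-1$ block goes through unchanged up to an adjusted constant depending only on $s$.
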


\begin{rem}\label{rem:sharper_bound_hom_Sob}
Let us remark that if $ q\geqslant 0 $ the inequality \eqref{eq:DeltaqHs} can be improved to the following sharper bound
\begin{equation}
		\label{eq:DeltaqHshom} \norm{\cFFStq u}_{L^2} \leq c_q(u) 2^{-qs} \norm{u}_{\dot{H}^s}.
	\end{equation}
	We refer to \cite[Chapter 2]{bahouri_chemin_danchin_book} for a thorough study on homogeneous and non-homogeneous dyadic paradifferential calculus. \fine
\end{rem}

\subsection{Paradifferential calculus} \label{paradiffcalcul_FFS}

The decomposition into dyadic blocks allows, at least formally, to write, for any tempered distributions $u$ and $v$,
\begin{align}
	u \ v = & \sum_{\substack{q\in\mathbb{Z} \\ q'\in\mathbb{Z}}}\cFFStq u\, \triangle_{q'} v
\end{align}
The Bony decomposition (see for instance \cite{Bony1981}, \cite{chemin_book} or \cite{bahouri_chemin_danchin_book} for more details) consists in splitting the above sum in three parts. The first corresponds to the low frequencies of $u$ multiplied by the high frequencies of $v$, the second is the symmetric counterpart of the first, and the third part concerns the indices $q$ and $q'$ which are comparable. Then,
\begin{equation*}
	uv = T_u v+ T_v u + R\left(u,v\right),
\end{equation*}
where
\begin{align*}
T_u v=& \sum_q S_{q-1} u \cFFStq v, &
 T_v u= & \sum_{q'} S_{q'-1} v \triangle_{q'} u, &
 R\left( u,v \right) = & \sum_{\abs{q-q'} \leqslant 1} \triangle_q u \triangle_{q'} v.
\end{align*}
Using the quasi-orthogonality given in \eqref{eq:dyadic03_FFS} and \eqref{eq:dyadic04_FFS}, we get the following relations.
\begin{lemma}
	\label{le:orthogonal_FFS}
	For any tempered distributions $u$ and $v$, we have
	\begin{align*}
		&\cFFStq \pare{S_{q'-1} u \triangle_{q'} v} = 0 && \text{if } \left|q-q'\right|\geqslant 5\\
		&\cFFStq \pare{S_{q'+1} u \triangle_{q'} v} = 0 && \text{if } q'\leqslant q-4.
	\end{align*}
\end{lemma}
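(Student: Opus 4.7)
\medskip

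\noindent\textbf{Proof plan for Lemma \ref{le:orthogonal_FFS}.} The plan is entirely based on tracking supports of Fourier transforms, using that the Fourier transform turns the pointwise product into a convolution: if $f,g\in\mathcal{S}'(\R^d)$ are suitable tempered distributions, then $\mathrm{supp}\,\widehat{fg}\subset\mathrm{supp}\,\widehat{f}+\mathrm{supp}\,\widehat{g}$. Combined with the support properties of $\chi$ and $\varphi$ recorded in Proposition \ref{pr:dyadic} (namely $\mathrm{supp}\,\chi\subset\mathcal{B}_d(0,4/3)$ and $\mathrm{supp}\,\varphi\subset\mathcal{A}_d(3/4,8/3)$), this will reduce both statements to elementary arithmetic on radii of the involved balls and annuli.

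First, I would record the two basic localisations. From Definition \ref{de:dyadic}, $\widehat{\triangle_{q}v}$ is supported in the annulus $\mathcal{A}_d\bigl((3/4)2^{q},(8/3)2^{q}\bigr)$; and writing $S_{q}u=\sum_{q'\leqslant q-1}\triangle_{q'}u$, the Fourier support of $S_{q}u$ is contained in the ball $\mathcal{B}_d\bigl(0,(4/3)2^{q}\bigr)$ (the largest dyadic block entering the sum is $\triangle_{q-1}u$, whose spectrum lies in $\{|\xi|\leqslant (8/3)2^{q-1}\}=\{|\xi|\leqslant (4/3)2^{q}\}$).

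For the first identity, $\widehat{S_{q'-1}u}$ lives in $\mathcal{B}_d(0,(2/3)2^{q'})$ and $\widehat{\triangle_{q'}v}$ in $\mathcal{A}_d((3/4)2^{q'},(8/3)2^{q'})$. Taking the Minkowski sum (and using the reverse triangle inequality for the lower bound) the spectrum of $S_{q'-1}u\,\triangle_{q'}v$ sits in the annulus
\[
\Bigl\{\tfrac{1}{12}\,2^{q'}\leqslant|\xi|\leqslant \tfrac{10}{3}\,2^{q'}\Bigr\}.
\]
For $\triangle_{q}$ to be non-zero on this product, this annulus must meet $\mathcal{A}_d((3/4)2^{q},(8/3)2^{q})$, i.e.\ one must have simultaneously $(3/4)2^{q}\leqslant (10/3)2^{q'}$ and $(1/12)2^{q'}\leqslant (8/3)2^{q}$, that is $q-q'\leqslant 2$ and $q'-q\leqslant 5$; in particular $|q-q'|\geqslant 5$ forces the spectra to be disjoint (the borderline case $q'-q=5$ only meets the sphere $|\xi|=(8/3)2^{q}$, where $\varphi(2^{-q}\xi)$ vanishes), hence $\triangle_{q}(S_{q'-1}u\,\triangle_{q'}v)=0$.

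For the second identity, $\widehat{S_{q'+1}u}$ is now localised in $\mathcal{B}_d(0,(8/3)2^{q'})$. Since one of the two factors is supported in a ball through the origin, only an upper bound survives: the spectrum of $S_{q'+1}u\,\triangle_{q'}v$ is contained in the ball of radius $((8/3)+(8/3))2^{q'}=(16/3)2^{q'}$. Intersecting with $\mathcal{A}_d((3/4)2^{q},(8/3)2^{q})$ requires $(3/4)2^{q}\leqslant (16/3)2^{q'}$, i.e.\ $q-q'\leqslant\log_{2}(64/9)<3$; so as soon as $q'\leqslant q-4$ the supports are disjoint and $\triangle_{q}(S_{q'+1}u\,\triangle_{q'}v)=0$. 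The only conceptual subtlety is the borderline case of the first claim, which is handled by observing that $\varphi$ vanishes on the outer boundary of its support; beyond that the argument is purely bookkeeping on Fourier supports.
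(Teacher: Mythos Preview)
Your argument is correct and is exactly the standard Fourier-support bookkeeping that underlies this lemma; the paper itself does not give a proof but merely states that the relations follow from the quasi-orthogonality properties \eqref{eq:dyadic03_FFS}--\eqref{eq:dyadic04_FFS}, so you have written out precisely the computation the paper leaves implicit. One small remark: you silently assume $q,q'\geqslant 0$ when invoking the annular support of $\triangle_{q}$; the cases $q=-1$ or $q'=-1$ are trivial (e.g.\ $S_{-2}=0$) or reduce to the same arithmetic with the ball support of $\chi$, so nothing is missing, but it would be cleaner to mention this.
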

\noindent Lemma \ref{le:orthogonal_FFS} implies the following decomposition, which we will widely use in this paper
\begin{equation}
	\label{eq:Bonydec} \cFFStq (uv) = \spres \cFFStq \pare{S_{q'-1} v \triangle_{q'} u} + \sloin \cFFStq \pare{S_{q'+2} u \triangle_{q'} v}.
\end{equation}

\textit{Proof of Corollary \ref{cor:FK} :} Let us consider the three dimensional \NS\ equations
\begin{equation}\label{eq:INS}\tag{INS}
\left\lbrace
\begin{aligned}
& \partial_t u + u \cdot \nabla u -\nu \Delta u = - \nabla p,\\
& \dive u =0,\\
& \left. u \right|_{t=0}=u_0.
\end{aligned}
\right.
\end{equation}

Let us consider at first the behavior of \eqref{eq:INS} on the hi-frequencies. To do so let us apply the $ q $--th, for $ q\geqslant 0 $, dyadic block on such equation and lat us multiply the resulting equation for $ \tq u $ and let us integrate in space; we deduce the following differential inequality:
\begin{equation}\label{eq:energy_bound_BU_1}
\frac{1}{2}\ \frac{\d}{\d t} \left\| \tq u \right\|^2_{L^2} + \nu \left\| \tq \nabla u \right\|^2 _{L^2} \leqslant \left| \left(\left. \tq \left( u \otimes u \right) \right| \tq \nabla u  \right)_{L^2} \right|.
\end{equation}
With Bony decomposition we deduce 
\begin{multline*}
\left| \left(\left. \tq \left( u \otimes u \right) \right| \tq \nabla u  \right)_{L^2} \right| \\
\leqslant \sumf \left| \left(\left. S_{q'-1}u \otimes \triangle_{q'} u \right| \tq \nabla u   \right)_{L^2} \right| 
+\sumi \left| \left(\left. S_{q'+2} u \otimes \triangle_{q'} u \right| \tq \nabla u   \right)_{L^2} \right|\\
= I_{1, q} + I_{2, q}.
\end{multline*}
Since $ I_{1, q} $ and  $I_{2, q} $ are symmetric we bound the term $I_{2, q} $ which involves an infinite sum and it is hence more difficult. Applying H\"older inequality  we deduce
\begin{align*}
I_{2, q} = & \ \sumi \left| \left(\left. S_{q'+2} u \otimes \triangle_{q'} u \right| \tq \nabla u   \right)_{L^2} \right|\\
\leqslant & \ \left\| u \right\|_{L^6} \left\| \tq \nabla u \right\|_{L^2} \sumi \left\| \triangle_{q'} u \right\|_{L^3}.
\end{align*}
We use now the embedding $ \dot{H}^1\hra L^6 $ and $ \dot{H}^{1/2} \hra L^3 $ and the property
\begin{equation}\label{eq:prop_dyadic}
\left\| \tq u \right\|_{L^2} \sim 2^{-qs} c_q \left( u \right) \left\| u \right\|_{\dot{H}^s},
\end{equation}
where $ \left( c_q \right)_q\in \ell^2 $. As we pointed out in Remark \ref{rem:sharper_bound_hom_Sob} the estimate \eqref{eq:prop_dyadic} con be applied in this context since we are studying the $ H^s $--energy of the hi-frequencies of \eqref{eq:INS}.  We deduce hence that
\begin{align*}
I_{2, q} \lesssim 2^{-2qs} c_q \left\| u \right\|_{\dot{H}^1} \left\| u \right\|_{{\dot{H}^{s+\frac{1}{2}}}} \left\| u \right\|_{{\dot{H}^{s+1}}} \sumi 2^{\left( q-q' \right) s} c_{q'}.
\end{align*}
We notice that
\begin{equation*}
\left( \sumi 2^{\left( q-q' \right) s} c_{q'} \right)_q =\left( \left(  2^{sp} \ 1_{p<4} \right) \star c_p \right)_q \in \ell^1, 
\end{equation*}
whence the sequence $ \left( b_q \right)_q $ defined as
\begin{equation*}
b_q = c_q \sumi 2^{\left( q-q' \right) s} c_{q'} \in \ell^1.
\end{equation*}
Let us denote the hi-frequencies of $ u $ as
	\begin{equation*}
		u^{\text{H}}=\sum_{q\geqslant 0} \tq u, 
	\end{equation*}
Sobolev interpolation, Young inequality, a multiplication for $ 2^{2qs} $, the use of \eqref{eq:prop_dyadic}, summing on $ q\geqslant 0 $,  a parabolic absorption and Gronwall inequality transform \eqref{eq:energy_bound_BU_1} into
\begin{equation}\label{eq:energy_bound_BU_2}
\left\| u^{\text{H}} \right\|_{\dot{H}^s}^2 + \nu \int_0^t \left\| \nabla u^{\text{H}} \right\|^2_{\dot{H}^s} \d \tau \lesssim \left\| u_0 \right\|^2_{\dot{H}^s} \exp\set{ \frac{C}{\nu^3} \int_0^t \left\| u \right\|^4_{\dot{H}^1} \d \tau }.
\end{equation}
Let us recall that for the low-frequencies
\begin{equation*}
\norm{\triangle_{-1}u}_{H^s}\lesssim_{s} \norm{u}_{L^2}, 
\end{equation*}
whence a standard $ L^2 $ estimate on the \NS\ equations gives us the bound
\begin{equation}\label{eq:energy_bound_BU_3}
\left\| u \right\|_{L^2}^2 + 2\nu \int_0^t \left\| \nabla u \right\|^2_{L^2} \d \tau \leqslant \left\| u_0 \right\|^2_{L^2}.
\end{equation}
Since 
\begin{equation*}
\norm{u}_{H^s}\leqslant \norm{\triangle_{-1}u}_{H^s} + \left\| u^{\text{H}} \right\|_{\dot{H}^s}, 
\end{equation*}
summing \eqref{eq:energy_bound_BU_2} and \eqref{eq:energy_bound_BU_3} and since for $ s>0 $ the non-homogeneous Sobolev space $ H^s $ is continuously embedded in both $ \dot{H}^s$ and $ L^2 $,  we deduce hence the inequality
\begin{equation*}
\left\| u \right\|_{{H}^s}^2 + \nu \int_0^t \left\| \nabla u \right\|^2_{{H}^s} \d \tau \leqslant C \left\| u_0 \right\|^2_{{H}^s}\left[  \exp\set{ \frac{C}{\nu^3} \int_0^t \left\| u \right\|^4_{\dot{H}^1} \d \tau } +1 \right],
\end{equation*}
concluding.\hfill $ \Box $

\section{Spectral analysis of the linear operator} \label{sec:spectral_analysis}

In the context of singular perturbation problems an important role is determined by the dynamics induced by the singular  operator $ \PA $, where $ \PP $ is defined in \eqref{eq:definition_proj_Leray} and $ \mathcal{A} $ in \eqref{matrici}. In particular we are interested to study the effects of the oscillations induced by such operator. This is generally done with tools of Fourier analysis such as dispersive estimates on highly oscillating integrals (\cite{monographrotating}, \cite{Stein93}). To perform such analysis is hence very important to understand the explicit structure of the eigenvalues of the linear operator $ \varepsilon^{-1}\PA -\mathbb{D} $, this is the scope of the present section.\\
We consider the linear operator
\begin{equation}
\label{eq:def_Leps}
L_\varepsilon = \PA -\varepsilon \mathbb{D},
\end{equation}
whose Fourier symbol is
\begin{equation*}
\hat{L}_\varepsilon =
\left( 
\begin{array}{cccc}
\varepsilon \nu \left| \xi \right|^2 & 0 & 0 & -\frac{\xi_3\xi_1}{\left| \xi \right|^2}\\
0 & \varepsilon \nu \left| \xi \right|^2 & 0 & -\frac{\xi_3\xi_2}{\left| \xi \right|^2} \\
0 & 0 & \varepsilon \nu \left| \xi \right|^2 & \frac{\left| \xi_h \right|^2}{\left| \xi \right|^2}\\
0 & 0 & -1 & \varepsilon \nu' \left| \xi \right|^2
\end{array}
 \right).
\end{equation*}
We  study the parabolic operator $ L_\varepsilon $ instead than the hyperbolic $ \PA $ since we want to take in account the regularizing effects induced by the second-order elliptic operator $ -\mathbb{D} $. This choice will become clear in Section \ref{sec:dispersive_properties}.\\
The characteristic polynomial associated to $ \hat{L}_\varepsilon $ is
$$
P_{\hat{L}_\varepsilon} \left( \lambda \right)= \left( \varepsilon \nu \left| \xi \right|^2 -\lambda \right)^2 \left( \lambda^2 - \lambda \varepsilon \left| \xi \right|^2 \left( \nu+ \nu' \right) + \frac{\left| \xi_h \right|^2}{\left| \xi \right|^2} + \varepsilon^2 \nu\nu' \left| \xi \right|^4 \right).
$$
which admits four roots, 
\begin{equation}\label{eq:eigenvalue_0}
\lambda_0^\varepsilon \left( \xi \right)= \varepsilon \nu \left| \xi \right|^2,
\end{equation}
which has multiplicity two and
\begin{equation}\label{eq:eigenvalue_pm}
\lambda_\pm^\varepsilon \left( \xi \right) = \frac{1}{2} \varepsilon \left( \nu+ \nu' \right) \left| \xi \right|^2 \pm i \frac{\left| \xi_h \right|}{\left| \xi \right|}\ S_\varepsilon \left( \xi \right),
\end{equation}
where
$$
S_\varepsilon \left( \xi \right)= \sqrt{1- \varepsilon^2 \ \frac{\left( \nu-\nu' \right) \left| \xi \right|^6}{4 \left| \xi_h \right|^2}}.
$$
Let us restrict ourselves on the localization
\begin{equation}\label{eq:CrR}
\Crr = \set{ \xi \in \R^3_\xi : \left| \xi_h \right|>r, \ \left| \xi \right|<R },
\end{equation}
we choose such localization since in $ \Crr $ the eigenvalues  $ \lambda_0^\varepsilon, \lambda_\pm^\varepsilon $ are well-defined. Moreover $ \left| \lambda_\pm^\varepsilon \left( \xi \right) \right| \geqslant\frac{r}{2R} $ (if $ \varepsilon $ is sufficiently small) for any $ \xi\in \Crr $, hence the oscillating eigenvalues are never null in such set. 
It is clear that, for $ \varepsilon $ sufficiently small, on $ \Crr $
\begin{equation*}
\left| S_\varepsilon \left( \xi \right) - 1 \right|\leqslant C_{r,R} \ \varepsilon,
\end{equation*}
hence from now on we shall consider implicitly $ S_\varepsilon \approx 1 $.\\

Let us evaluate the eigenvectors related to the eigenvalues $ \lambda_i^\varepsilon $, relatively to the eigenvalue $ \lambda_0^\varepsilon $, which has multiplicity two, we have two eigenvectors
\begin{equation*}
\begin{aligned}
e_1 = & \ \left( \begin{array}{cccc}
1&0&0&0
\end{array} \right)^\intercal,
&
e_2 = & \ \left( \begin{array}{cccc}
0&1&0&0
\end{array} \right)^\intercal.
\end{aligned}
\end{equation*}
These eigenvectors are not divergence-free, hence, a priori, they do not describe the evolution of solutions of equation \eqref{perturbed BSSQ}. In any case there is a subspace of the space $ \mathbb{C}e_1\oplus \mathbb{C} e_2 $ which is composed by divergence-free vector fields, namely the space spanned by the vector
\begin{equation}
\label{eq:eigemvectors_0}
E_0 \left( \xi \right)= \frac{1}{\left| \xi_h \right|} \left( \begin{array}{cccc}
-\xi_2 & \xi_1 & 0 & 0
\end{array} \right)^\intercal.
\end{equation}
Relatively to the eigenvalues $ \lambda_\pm^\varepsilon $ the following eigenvectors can be computed
\begin{equation}
\label{eq:eigemvectors_pm}
 E_\pm^\varepsilon \left( \xi \right) =
\left( 
\begin{array}{c}
\pm i \frac{\xi_3 \xi_1}{\left| \xi \right| \left| \xi_h \right|} \ \mathcal{S}_\varepsilon^\pm \left( \xi \right) \\
\pm i \frac{\xi_3 \xi_2}{\left| \xi \right| \left| \xi_h \right|} \ \mathcal{S}_\varepsilon^\pm \left( \xi \right)\\
\mp i \frac{\left| \xi_h \right|}{\left| \xi \right|}  \ \mathcal{S}_\varepsilon^\pm \left( \xi \right)\\
1
\end{array}
 \right),
\end{equation}
where
\begin{equation*}
\mathcal{S}_\varepsilon^\pm \left( \xi \right) = S_\varepsilon \left( \xi \right) \pm \frac{i}{2}\ \varepsilon \ \frac{\left( \nu-\nu' \right)\left| \xi \right|^3}{\left| \xi_h \right|},
\end{equation*}
hence if $ \xi \in \Crr $ and $ \varepsilon $ small $ \mathcal{S}_\varepsilon^\pm \approx 1 $.\\

An important feature of the spectral analysis of the operator $ L_\varepsilon $ is that the eigenvectors \textit{are not orthogonal}. We will in Section \ref{sec:dispersive_properties} require to analyze the regularity of the propagation of some vector field along the eigendirections spanned by $ E_\pm^\varepsilon $. This cannot hence be done by a standard application of the triangular inequality since, as we will see below, the projections onto the eigenspaces are defined by suitable Fourier multiplier, hence a more thorough analysis is required.\\
Let us now consider a solenoidal vector field $ V= \left( V^1, V^2, V^3, V^4 \right) $ which belongs to the space 
\begin{equation*}
\mathcal{X}= \bigoplus_{i=0,\pm} \mathbb{C}\ E_i^\varepsilon,
\end{equation*}
indeed
\begin{equation*}
V= \mathcal{F}^{-1 } \left( \sum_{i=0,\pm} k_{i,\varepsilon} \left( V \right) \ E_i^\varepsilon \right),
\end{equation*}
where the elements $ k_{i,\varepsilon}, \ i=0, \pm $ are suitable forms which act on the space of solenoidal vector fields and they describe the magnitude of the projection of $ V $ onto the eigenspace $ \CC \ E_i^\varepsilon $. 
We can hence define the projections of a divergence-free vector field in $ \mathcal{X} $ onto the eigenspace spanned by $ E_i^\varepsilon $ as
\begin{align}
\label{eq:projectors}
\mathbb{P}_{i, \varepsilon} \left( V \right) = \mathcal{F}^{-1} \left( k_{i,\varepsilon} \left( V \right) \ E_i^\varepsilon \right), && i=0,\pm .
\end{align}
The definition of $ \PP_{i, \varepsilon} $ does not give any insight of the regularity of the element $ \PP_{i, \varepsilon}  \left( V \right) $ w.r.t. the regularity of $ V $. We expect that the form $ k_i $ acts as a Fourier multiplier of a suitable degree. We prove in fact that, as long as we restrict ourselves in the set $ \Crr $, the map $ \hat{V}\mapsto k_{i,\varepsilon} \left( V \right)  E_i^\varepsilon $ acts as a multiplication for a constant in terms of $ L^2 $ regularity:

\begin{lemma}\label{lem:boundedness_projectors}
Let $ V\in \mathcal{X} $ a solenoidal vector field such that $ \textnormal{supp}  \left( \hat{V} \right)\subset \Crr $, then for $ i=0,\pm $
\begin{equation*}
\left\| \mathbb{P}_{i, \varepsilon} \left( V \right) \right\|_{\cFFSLtwo} \leqslant C_{r,R} \left\| V \right\|_{\cFFSLtwo}.
\end{equation*}
\end{lemma}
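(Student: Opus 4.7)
The plan is to realize $\mathbb{P}_{i,\varepsilon}$ as a Fourier multiplier whose symbol is pointwise bounded by $C_{r,R}$ on $\mathcal{C}_{r,R}$, and then conclude by Plancherel. First I would note that at each $\xi \in \mathcal{C}_{r,R}$ the three vectors $E_{0}(\xi), E_{+}^{\varepsilon}(\xi), E_{-}^{\varepsilon}(\xi)$ live in and span the three-dimensional subspace of $\mathbb{C}^{4}$ cut out by the solenoidal constraint $\xi_{1} V^{1}+\xi_{2} V^{2}+\xi_{3} V^{3}=0$; this is precisely why the localization $|\xi_{h}|>r$ is imposed, since $E_{0}$ would otherwise be singular on $\{\xi_{h}=0\}$. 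Writing $M(\xi)$ for the $4\times 3$ matrix whose columns are $E_{0},E_{+}^{\varepsilon},E_{-}^{\varepsilon}$, any solenoidal $\hat{V}(\xi)$ can be uniquely decomposed as $\hat{V}(\xi)=\sum_{i}k_{i,\varepsilon}(V)(\xi)\, E_{i}^{\varepsilon}(\xi)$ with $k=(M^{*}M)^{-1}M^{*}\hat{V}$. In particular $k_{i,\varepsilon}(V)(\xi)=F_{i}^{\varepsilon}(\xi)\cdot\hat{V}(\xi)$ for the rows $F_{i}^{\varepsilon}$ of the left inverse $(M^{*}M)^{-1}M^{*}$.

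Next I would estimate the symbol. Inspecting the explicit formulae \eqref{eq:eigemvectors_0} and \eqref{eq:eigemvectors_pm}, on $\mathcal{C}_{r,R}$ the potentially dangerous factors $|\xi|/|\xi_{h}|$ and $|\xi_{h}|/|\xi|$ are bounded by $R/r$ and $1$ respectively, and the scalars $\mathcal{S}_{\varepsilon}^{\pm}$ satisfy $|\mathcal{S}_{\varepsilon}^{\pm}-1|\leqslant C_{r,R}\varepsilon$ for $\varepsilon$ small; hence $|E_{i}^{\varepsilon}(\xi)|\leqslant C_{r,R}$ pointwise on $\mathcal{C}_{r,R}$. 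A direct computation at $\varepsilon=0$ then shows that $E_{0},E_{+}^{0},E_{-}^{0}$ are in fact \emph{mutually orthogonal} in $\mathbb{C}^{4}$, with squared norms $1,2,2$; that is, $M^{*}M\big|_{\varepsilon=0}=\mathrm{diag}(1,2,2)$. By the perturbative bound on $\mathcal{S}_{\varepsilon}^{\pm}$, the Gram matrix $M^{*}(\xi)M(\xi)$ therefore stays uniformly invertible on $\mathcal{C}_{r,R}$ for small $\varepsilon$, with $\|(M^{*}M)^{-1}\|\leqslant C_{r,R}$. Combining this with the bound on $|E_{i}^{\varepsilon}|$ gives $|F_{i}^{\varepsilon}(\xi)|\leqslant C_{r,R}$ on $\mathcal{C}_{r,R}$.

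Finally, since $\widehat{\mathbb{P}_{i,\varepsilon}V}(\xi)=\bigl(F_{i}^{\varepsilon}(\xi)\cdot\hat{V}(\xi)\bigr)E_{i}^{\varepsilon}(\xi)$ and $\hat{V}$ is supported in $\mathcal{C}_{r,R}$, the two bounds above yield $|\widehat{\mathbb{P}_{i,\varepsilon}V}(\xi)|\leqslant C_{r,R}\,|\hat{V}(\xi)|$ pointwise, and Plancherel's theorem concludes.

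The main obstacle is the uniform lower bound on $\det(M^{*}M)$: a priori two eigenvectors could become almost parallel somewhere on $\mathcal{C}_{r,R}$, which would blow up the dual basis. The resolution is the (mildly surprising) observation that at $\varepsilon=0$ the three eigenvectors are genuinely orthogonal in $\mathbb{C}^{4}$, which reduces the non-degeneracy of $M^{*}M$ to a small perturbation of a diagonal matrix and gives a clean constant $C_{r,R}$ depending only on the geometry of the localization set.
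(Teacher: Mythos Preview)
Your proof is correct and takes a genuinely different route from the paper. The paper completes $\{E_0,E_+^\varepsilon,E_-^\varepsilon\}$ to a full basis of $\mathbb{C}^4$ by adjoining $e_1$, writes the $4\times 4$ change-of-basis matrix $Q$, and computes $Q^{-1}$ explicitly; the coordinates $k_{i,\varepsilon}$ are then read off from $Q^{-1}\hat V$. Your approach instead stays on the three-dimensional solenoidal subspace and uses the left inverse $(M^{*}M)^{-1}M^{*}$ of the $4\times3$ eigenvector matrix, together with the pleasant observation that at $\varepsilon=0$ the Gram matrix $M^{*}M$ is exactly $\mathrm{diag}(1,2,2)$, so uniform invertibility on $\mathcal{C}_{r,R}$ follows by perturbation. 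What the paper's route buys is an explicit closed form for the projector $\mathbb{P}_0$ (equation \eqref{eq:definition_P0}), which is used later to identify the limit system; your route is shorter for the $L^2$ bound itself and sidesteps a small awkwardness in the paper's argument, namely that the entries of $Q^{-1}$ contain factors of $1/\xi_1$ which are not bounded on $\mathcal{C}_{r,R}$ and only become harmless after one uses the divergence-free relation on $\hat V$.
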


\begin{proof}
This is a problem of linear algebra. Let us consider the following basis of $ \mathbb{C}^4 $ (in the Fourier space)
$$
\mathcal{B}= \set{ e_1, E_0, E_+^\varepsilon, E_-^\varepsilon },
$$
and the canonical basis
$$
\B_{\text{can}}= \set{ e_j }_{j=1}^4.
$$
The matrix $ \hat{L}_\varepsilon $ is indeed diagonalizable, hence there exists an invertible matrix $ Q $ such that
$$
Q \hat{L}_\varepsilon \left( \xi \right) Q^{-1} = \text{diag} \set{ \lambda_0^\varepsilon \left( \xi \right),\lambda_0^\varepsilon \left( \xi \right), \lambda_+^\varepsilon \left( \xi \right), \lambda_-^\varepsilon \left( \xi \right) },
$$
the matrix $ Q $ is the change of base matrix from the base $ \B_{\text{can}} $ to the base $ \B $ and, given the explicit expression of the eigenvectors in \eqref{eq:eigemvectors_0}, \eqref{eq:eigemvectors_pm} it assumes the form
\begin{equation*}
Q = 
\left( 
\begin{array}{cccc}
1    &  \frac{-\xi_2}{\left| \xi_h \right|} 
&  i\frac{\xi_3 \xi_1}{\left| \xi \right| \left| \xi_h \right|} \ \mathcal{S}_\varepsilon^\pm \left( \xi \right)
&
- i\frac{\xi_3 \xi_1}{\left| \xi \right| \left| \xi_h \right|} \ \mathcal{S}_\varepsilon^\pm \left( \xi \right)\\
%-----------------------------------
0    &  \frac{\xi_1}{\left| \xi_h \right|}
& i \frac{\xi_3 \xi_2}{\left| \xi \right| \left| \xi_h \right|} \ \mathcal{S}_\varepsilon^\pm \left( \xi \right)
&
-i \frac{\xi_3 \xi_2}{\left| \xi \right| \left| \xi_h \right|} \ \mathcal{S}_\varepsilon^\pm \left( \xi \right)\\
%------------------------------
0     &  0  &
-i \frac{\left| \xi_h \right|}{\left| \xi \right|}  \ \mathcal{S}_\varepsilon^\pm \left( \xi \right)
&
i \frac{\left| \xi_h \right|}{\left| \xi \right|}  \ \mathcal{S}_\varepsilon^\pm \left( \xi \right)\\
%------------------
0     &  0  & 1 &1
\end{array}
 \right).
\end{equation*}
Let us note that the first column of $ Q $ is $ \left( 1, 0, 0, 0 \right)^\intercal $, this is motivated by the fact that we completed the basis $ \B $ with the vector $ e_1 $ in order to obatin a complete basis of $ \CC^4 $.\\
The matrix $ Q $ performs the following transformation,
$$
Q
\left( 
\begin{array}{c}
0\\
k_{0, \varepsilon} \\ k_{+, \varepsilon} \\ k_{-, \varepsilon} 
\end{array}
 \right)
 =
 \left( 
\begin{array}{c}
\hat{V}^1 \\ \hat{V}^2 \\ \hat{V}^3 \\ \hat{V}^4
\end{array} 
  \right),
$$
we deduce hence that the element 
\begin{equation}
\label{eq:equation_k}
\left( 
\begin{array}{c}
0\\
k_{0, \varepsilon} \\ k_{+, \varepsilon} \\ k_{-, \varepsilon} 
\end{array}
 \right)
 =
 Q^{-1} 
 \left( 
\begin{array}{c}
\hat{V}^1 \\ \hat{V}^2 \\ \hat{V}^3 \\ \hat{V}^4
\end{array} 
  \right),
\end{equation}
gives the expression of the $ k_i $'s in terms of the variables $ \hat{V}_i $'s multiplied by suitable Fourier multipliers determined by the inverse matrix $ Q^{-1} $. 
Whence it suffice to compute the explicit expression of the matrix $ Q^{-1} $ to solve the linear system above. The matrix $ Q^{-1} $ assumes the form
\begin{equation}
\label{eq:definition_Q-1}
 Q^{-1}=
 \left( 
\begin{array}{cccc}
1 & \frac{\xi_2}{\xi_1} & \frac{\xi_3}{\xi_1} & 0\\
 0 & \frac{\left| \xi_h \right|}{\xi_1} & \frac{\xi_2 \xi_3}{\left| \xi_h \right| \xi_1} & 0\\
 0 & 0 & -i \frac{\left| \xi \right|}{2 \mathcal{S}_\varepsilon^\pm \left( \xi  \right)} & \frac{1}{2}\\
 0 & 0 & +i \frac{\left| \xi \right|}{2 \mathcal{S}_\varepsilon^\pm \left( \xi  \right)} & \frac{1}{2}
\end{array} 
  \right) ,
\end{equation}
whence it is clear that, since $ \hat{V} $ is supported in $ \Crr $:
\begin{align*}
\left| \left( 
\begin{array}{c}
0\\
k_{0, \varepsilon} \\ k_{+, \varepsilon} \\ k_{-, \varepsilon} 
\end{array}
 \right) \right|
 \leqslant & \left| Q^{-1} \right| \left| \hat{V} \right|,\\
 \leqslant & \ C_{r,R}\left| \hat{V} \right|.
\end{align*}
The claim follows applying Plancherel theorem.
\end{proof}

Lemma \ref{lem:boundedness_projectors} gives hence a complete answer regarding the regularity of the projectors $ \PP_{i, \varepsilon} $, nonetheless we did not compute explicitly their form. Regarding the first two equations of the system \eqref{eq:equation_k} we can deduce the following explicit equations thanks to the explicit expression of $ Q^{-1} $ given in \eqref{eq:definition_Q-1}:
\begin{align*}
0 = & \ \xi_1 \hat{V}^1 + \xi_2 \hat{V}^2 + \xi_3 \hat{V}^3,\\
k_0 = & \ \frac{1}{\left| \xi_h \right|} \left( -\xi_2 \hat{V}^1 + \xi_1 \hat{V}^2 \right),
\end{align*}
hence we can compute explicit expression of the projector $ \PP_0 V = \mathcal{F}^{-1} \left( k_0 \left( V \right) E_0 \right) $, which in particular assumes the form (in the Fourier variables):
\begin{align*}
\mathcal{F} \left( \PP_{0, \varepsilon} V \right) = & \ k_{0, \varepsilon} \left( V \right) E_0 ,\\
= & \ \frac{1}{\left| \xi_h \right|^2}
\left( 
\begin{array}{c}
-\xi_2 \\ + \xi_1 \\ 0 \\ 0
\end{array}
 \right)
 \left( -\xi_2 \hat{V}^1 + \xi_1 \hat{V}^2 \right).
\end{align*}
Whence we can define the projector $ \PP_0 $ (which does not depend any more on the parameter $ \varepsilon $) which maps a solenoidal vector field $ V $ onto $ \CC E_0 $ via the following pseudo-differential operator of order zero
\begin{equation}
\label{eq:definition_P0}
\PP_0 V = \left( 
\begin{array}{c}
-\left( -\Dh \right)^{-1}\partial_2 \ \text{curl}_h V\\
+\left( -\Dh \right)^{-1}\partial_1 \ \text{curl}_h V\\
0\\
0
\end{array}
 \right)
=
\left( -\Dh \right)^{-1}
\left( 
\begin{array}{cccc}
\partial_2^2 & -\partial_1\partial_2 & 0 & 0\\
-\partial_1\partial_2 & \partial_1^2& 0 & 0 \\
 0 & 0 & 0 & 0\\
  0 & 0 & 0 & 0
\end{array}
 \right) V
 ,
\end{equation}
where the operator $ \curlh $ is defined as $ \curlh  V = -\partial_2 V^1 + \partial_1 V^2 $.

The space $ \CC E_0 $ shall be denoted as \textit{non-oscillating} subspace, whereas the space $ \CC E_+^\varepsilon \oplus \CC E_-^\varepsilon $ shall be denoted as \textit{oscillating} subspace. This choice of lexicon can easily be justified: let us consider the following linear system,
\begin{equation*}
\left\lbrace
\begin{aligned}
& \partial_t W_{\text{L}} + \frac{1}{\varepsilon} L_\varepsilon \  W_{\text{L}} =0,\\
& \left.  W_{\text{L}} \right|_{t=0}=  W_{\text{L}, 0}.
\end{aligned}
\right.
\end{equation*}
The unique solution of such system can be written as 
\begin{equation*}
 W_{\text{L}}\left( t \right)= e^{\frac{t}{\varepsilon}L_\varepsilon}  W_{\text{L}, 0}.
\end{equation*}
Respectively hence the projection of $  W_{\text{L}} $ onto the subspaces $ \CC E_0, \CC E_\pm^\varepsilon $ is 
\begin{align*}
\mathbb{P}_0  W_{\text{L}} \left( t \right) = & \ \mathcal{F}^{-1} \left( e^{-\nu t \ \left| \xi \right|^2}\widehat{\PP_0 W_{\text{L}, 0}} \left( \xi \right) \right),\\
\PP_{\pm, \varepsilon}  W_{\text{L}} \left( t \right) = & \ \mathcal{F}^{-1} \left( e^{-\ \frac{t}{\varepsilon}\lambda_\varepsilon^\pm \left( \xi \right)}\widehat{\PP_{\pm, \varepsilon} W_{\text{L}, 0}} \left( \xi \right) \right).
\end{align*}
We can immediately see hence that the elements $ \mathbb{P}_0  W_{\text{L}} $ and $ \PP_{\pm, \varepsilon}  W_{\text{L}} $ have two qualitatively very different behaviors: the former has a purely parabolic decay-in-time, while the latter is described by an oscillating integral.

	\section{Global well posedness of the limit system}\label{sec:gwp_limit}
	
	A consistent part of Theorem \ref{thm:main_result} deals with the convergence of solutions of \eqref{perturbed BSSQ} in the regime $ \varepsilon \to 0 $ to a certain limit function. \\
	We expect hence that once we restrict ourselves onto $ \CC E^0 $, no dispersive effect occur due to the absence of the singular perturbation, determining hence a candidate for the limit model we look for.\\

	\subsection{Formal derivation of the limit system}\label{sec:formal_derivation_limit_system}
	
An important step as long as concerns singular perturbation problems is to deduce formally a limit system to whom \eqref{perturbed BSSQ} converges. Several works on geophysical fluids such as \cite{monographrotating}, \cite{charve1} or \cite{FGN} suggest that the solutions of \eqref{perturbed BSSQ} converge (in a sense which we do not specify at the moment) to an element belonging to the nonoscillatory space $ \CC E^0 $. \\

The next result is a direct deduction of Theorem \ref{thm:Leray_applied} (see for instance \cite[Corollary 2.1]{GallagherSaint-Raymondinhomogeneousrotating}):
\begin{lemma}\label{lem:weak_conv_var_to_0}
Let $ U_0 $ be in $ \cFFSLtwo $, and let $ U^\varepsilon $ be a weak solution of \eqref{perturbed BSSQ}, there exists a $ U^\star \in L^\infty \left( \R_+; \cFFSLtwo \right) \cap L^2 \left( \R_+; \dot{H}^1 \left( \R^3 \right) \right) $ and a subsequence $ \varepsilon_j \xrightarrow{j\to \infty} 0 $ such that
\begin{equation*}
U^{\varepsilon_j} \rhu U^\star \ \text{weakly in } \ L^2_{\loc} \left( \R_+; L^2_{\loc} \left( \R^3 \right) \right) \ \text{ as } \ j\to \infty.
\end{equation*}
\end{lemma}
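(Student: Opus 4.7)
The proof is a routine weak-compactness argument riding on the uniform bound provided by Theorem \ref{thm:Leray_applied}. Since that theorem gives that the family $\left(U^\varepsilon\right)_{\varepsilon>0}$ is uniformly bounded in
$$
\dot{\mathcal{E}}^0\left(\R^3\right) = L^\infty\left(\R_+;L^2\left(\R^3\right)\right)\cap L^2\left(\R_+;\dot{H}^1\left(\R^3\right)\right),
$$
no properties of the equation itself beyond this a priori estimate are needed to produce the weak limit; the equation only enters implicitly through the fact that $U^\varepsilon$ is a Leray-type weak solution, which is what yields the bound in the first place.

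The plan is as follows. First, I would invoke the Banach--Alaoglu theorem on the ball of $L^\infty\left(\R_+;L^2\left(\R^3\right)\right)$, which is the topological dual of the separable space $L^1\left(\R_+;L^2\left(\R^3\right)\right)$, to extract a subsequence $\varepsilon_j\to 0$ and an element $U^\star \in L^\infty\left(\R_+;L^2\left(\R^3\right)\right)$ such that $U^{\varepsilon_j}\rhu U^\star$ weakly-$\star$ in this space. Second, using the reflexivity of the Hilbert space $L^2\left(\R_+;\dot{H}^1\left(\R^3\right)\right)$, I would pass to a further subsequence (not relabeled) along which $U^{\varepsilon_j}$ converges weakly to some $V^\star \in L^2\left(\R_+;\dot{H}^1\left(\R^3\right)\right)$. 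Testing both convergences against an arbitrary $\varphi\in\mathcal{C}^\infty_c\left(\R_+\times\R^3\right)$ and comparing distributional limits forces $U^\star = V^\star$, so the same $U^\star$ lies in both spaces, as claimed.

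It remains to upgrade to weak convergence in $L^2_{\loc}\left(\R_+;L^2_{\loc}\left(\R^3\right)\right)$, and this is essentially for free: any test function $\varphi\in L^2\left([0,T]\times B_R\right)$, extended by zero outside this set, defines an element of $L^1\left(\R_+;L^2\left(\R^3\right)\right)$ thanks to Cauchy--Schwarz on the finite interval $[0,T]$. The weak-$\star$ convergence already obtained, tested against such $\varphi$, is precisely the desired weak convergence locally in space-time. No compactness-in-time (of Aubin--Lions type) is invoked, which is appropriate here since only weak, not strong, local convergence is claimed.

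The main potential difficulty would be the uniqueness of the limit across the two functional settings, but this is resolved by the density of $\mathcal{C}^\infty_c\left(\R_+\times\R^3\right)$ in the preduals of both $L^\infty\left(\R_+;L^2\right)$ and $L^2\left(\R_+;\dot{H}^1\right)$ (the latter modulo the usual proviso that $\dot{H}^1$-testing pairs well against $\mathcal{C}^\infty_c$). There is no genuine obstacle; the statement is a clean corollary of Banach--Alaoglu combined with the Leray-type bound of Theorem~\ref{thm:Leray_applied}.
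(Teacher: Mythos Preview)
Your argument is correct and matches the paper's approach: the paper does not give a detailed proof at all, stating the lemma as ``a direct deduction of Theorem~\ref{thm:Leray_applied}'' with a reference to \cite[Corollary 2.1]{GallagherSaint-Raymondinhomogeneousrotating}. Your Banach--Alaoglu extraction from the uniform $\dot{\mathcal{E}}^0$ bound is exactly the intended reasoning, spelled out in more detail than the paper itself provides.
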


Taking a formal limit for $ \varepsilon \to 0 $ in \eqref{perturbed BSSQ} and supposing that $ \left( U^\varepsilon , \Phi^\varepsilon \right) \to \left( U^\star, \Phi^\star \right) $ the following balance
\begin{equation}\label{eq:qg_modified_balance}
\begin{aligned}
u^{3, \star} = & \ 0,\\
\rho^\star = & \ \partial_3 \Phi^\star,
\end{aligned}
\end{equation}
has to take place by simple comparison of magnitude in \eqref{perturbed BSSQ} in the limit $ \varepsilon \to 0 $.\\
Le us consider now the subsequence $ \left( \varepsilon_j \right)_j $ identified in Lemma \ref{lem:weak_conv_var_to_0}.
With a standard argument of cancellation of the pressure on \eqref{perturbed BSSQ} we can deduce that
\begin{equation*}
-\Delta \Phi^{\varepsilon_j} = -\partial_3 \rho^{\varepsilon_j} + \varepsilon_j \dive \dive \left( u^{\varepsilon_j} \otimes u^{\varepsilon_j} \right).
\end{equation*}
Since 
\begin{align*}
\left\| \dive \dive \left( u^\varepsilon \otimes u^\varepsilon \right) \right\|_{L^2_{\loc}\left(\R_+; H^{-3} \right)}\leqslant \left\| u^\varepsilon \right\|_{L^\infty_{\loc} \left( \R_+; L^2 \right)}
\left\| u^\varepsilon \right\|_{L^2_{\loc} \left( \R_+; H^1 \right)}
<\infty
, \hspace{5mm}\forall \ \varepsilon >0,
\end{align*}
we  deduce that $ \varepsilon \dive \dive \left( u^{\varepsilon} \otimes u^{\varepsilon} \right) $ is an $ \mathcal{O} \left( \varepsilon \right) $ function in the $ L^2_{\loc}\left(\R_+; H^{-3} \right) $ topology, hence since $ \rho^{\varepsilon_j} \rhu \rho^\star $ in $ L^2_{\loc}\left(\R_+; L^2 \right) $ \textit{for the same subsequence} $ \left( \varepsilon_j \right)_j $ we deduce
\begin{equation*}
-\Delta \Phi^{\varepsilon_j} \to -\Delta \Phi^\star = -\partial_3 \rho^\star,
\end{equation*}
in the sense of distributions. The above relation together with \eqref{eq:qg_modified_balance} imply that
\begin{equation*}
-\Delta \rho^\star = -\partial_3^2 \rho^\star \ \Rightarrow \ -\Dh \rho^\star=0.
\end{equation*}
But $ -\Dh \rho ^\star=0 $ in the whole space implies that $ \rho^\star = \rho^\star \left( x_3 \right) $, and hence  Lemma \ref{lem:weak_conv_var_to_0} allows us to state that $ \rho^\star \equiv 0 $ in $ L^2 $.\\
We hence deduced (formally) until now that
\begin{equation*}
\left( u^{h ,\varepsilon}, u^{3, \varepsilon}, \rho^\varepsilon, \Phi^\varepsilon \right) \rhu \left( u^{h, \star}, 0, 0 , \Phi^\star \right),
\end{equation*}
we want to understand (heuristically) which equation is satisfied by the limit function $  u^{h, \star} $.

Next let us consider some very specific test functions $ \phi \in \mathcal{D}\left( \R_+\times \R^3 \right) $ such that $ \phi = \left( \phi_1, \phi_2, 0, 0 \right) $ and 
\begin{align*}
\phi_1 = -\partial_2 \Dh^{-1} \Psi , && 
\phi_2 = -\partial_1 \Dh^{-1} \Psi ,
\end{align*}
for some potential $ \Psi $. This in particular implies that $ \diveh \phi_h =0 $, these hypothesis have been imposed so that
\begin{equation*}
\hat{\phi} \left( t \right)\in \CC E_0, \hspace{1cm} \forall \ t >0.
\end{equation*}
Let us \textit{suppose} moreover that the weak convergence sated in Lemma \ref{lem:weak_conv_var_to_0} is strong enough so that 
\begin{equation}\label{eq:hyp_weak_conv}
u^{ \varepsilon_j}\otimes u^{\varepsilon_j} \rhu u^\star \otimes u^\star.
\end{equation}
Obviously this is not the case, but an educated guess which motivated the  development of the present work.\\

Testing  the equation \eqref{perturbed BSSQ} against functions of such form we deduce that (here we denote as $ u^{h, \varepsilon} $ the horizontal components of $ U^\varepsilon $)
\begin{equation*}
\psca{u^{h, \varepsilon_j}}{\partial_t \phi_h}
-\psca{u^h_0}{\psi \left( 0 \right)}
 + \psca{ u^{h, \varepsilon_j}\otimes u^{h, \varepsilon_j}}{\nh \phi_h} + \psca{u^{3, \varepsilon_j} u^{h, \varepsilon_j}}{\partial_3 \phi_h} + \psca{u^{h, \varepsilon_j}}{\Delta \phi_h}=0,
\end{equation*}
Let us take now formally the limit as $ \varepsilon_j \to 0 $, justified by Lemma \ref{lem:weak_conv_var_to_0}. First of all we remark, thanks to the balance deduced in \eqref{eq:qg_modified_balance}, and the hypothesis \eqref{eq:hyp_weak_conv}:
\begin{equation*}
u^{3, {\varepsilon_j}}\rhu 0 \ \Rightarrow \ \psca{u^{3, {\varepsilon_j}} u^{h, {\varepsilon_j}}}{\partial_3 \phi_h} \to 0 \ \text{as} \ \varepsilon \to 0.
\end{equation*}
Whence we deduce that, at least in this restricted distributional sense, the limit function describing the evolution of the horizontal components shall satisfy the system
\begin{equation*}
\left\lbrace
\begin{aligned}
& \partial_t u^{h, \star} + u^{h, \star} \cdot \nh u^{h, \star} - \nu \Delta u^{h, \star} = -\nh \Phi^\star,\\
& \diveh u^{h, \star} =0.
\end{aligned}
\right.
\end{equation*}

	\subsection{Detailed study of the limit system}

	Section \ref{sec:formal_derivation_limit_system} motivates hence the study of
	 the 2-dimensional, incompressible, stratified \NS\ system 
	\begin{equation}\label{eq:2DNS_stratified}
\left\lbrace
\begin{aligned}
& \partial_t \uh  \left( x_h, x_3 \right) + \uh \left( x_h, x_3 \right) \cdot \nh \uh \left( x_h, x_3 \right) - \nu \Delta \uh \left( x_h, x_3 \right) = -\nh \bar{p} \left( x_h, x_3 \right)\\
& \diveh \uh  \left( x_h, x_3 \right) =0,\\
& \left. \uh \left( x_h, x_3 \right) \right|_{t=0}=\mathbb{P}_0 \ U_0 \left( x_h, x_3 \right)= \uh_0 \left( x_h, x_3 \right).
\end{aligned}
\right.
\end{equation}
The operator $ \PP_0 $ is defined in \eqref{eq:definition_P0}. 
The  velocity field $ \uh $ is endowed with a 2d-like vorticity 
\begin{equation*}
\oh \left( x_h, x_3 \right) = -\partial_2 \bar{u}^{h,1} \left( x_h, x_3 \right) + \partial_1 \bar{u}^{h,2} \left( x_h, x_3 \right),
\end{equation*}
which, as well as for the two-dimensional \NS\ equation satisfies the transport-diffusion equation
\begin{equation}\label{eq:2DNS_vorticity_stratified}
\left\lbrace
\begin{aligned}
& \partial_t \oh \left( x_h, x_3 \right) + \uh \left( x_h, x_3 \right) \cdot \nh \oh  \left( x_h, x_3 \right)  - \nu \Delta \oh \left( x_h, x_3 \right) = 0\\
& \left. \oh  \left( x_h, x_3 \right) \right|_{t=0}=\oh_0 \left( x_h, x_3 \right).
\end{aligned}
\right.
\end{equation}
We can recover $ \uh $ from $ \oh $ via a 2D-like Biot-Savart law
\begin{equation*}
\uh \left( x_h, x_3 \right) =
\left( 
\begin{array}{c}
-\partial_2\\
\partial_1
\end{array}
 \right)
 \Dh^{-1} \oh  \left( x_h, x_3 \right),
\end{equation*}
as it was already outlined and justified in the previous section deducing the explicit expression of the projector  $ \PP_0 $ in \eqref{eq:definition_P0}.\\

Let us make a couple of remarks on the system \eqref{eq:2DNS_stratified}, the unknown $ \uh $ of \eqref{eq:2DNS_stratified} depends on all three space variables and is time-dependent, i.e. $ \uh = \uh \left( t, x \right) = \uh \left( t, x_h, x_3 \right) $. The equations \eqref{eq:2DNS_stratified} represents hence a \NS\ system in the horizontal directions $ x_h $, while it is a diffusive equation along the vertical direction $ x_3 $.\\

The results stated in the following lemmas are classical, hence the proof is omitted.
\begin{lemma} \label{lem:Ler_sol_ubar_omega}
Let $ \uh_0\in {\cFFSLtwo} $ and $  \oh_0 \in {\cFFSLtwo} $. Then there exists respectively a weak solution $ \uh, \oh $ of \eqref{eq:2DNS_stratified} and \eqref{eq:2DNS_vorticity_stratified} such that
$$
\uh, \oh \in L^\infty \left( \R_+; {\cFFSLtwo} \right)\cap L^2 \left( \R_+; \dot{H}^1 \left( \R^3 \right) \right),
$$
and, for each $ t>0 $, the following bounds hold
\begin{align*}
  \left\| \uh \left( t \right) \right\|_{\cFFSLtwo}^2 + 2\nu \int_0^t \left\| \nabla \uh \left( \tau \right)\right\|_{\cFFSLtwo}^2 \d \tau \leqslant \left\| \uh_0 \right\|_{\cFFSLtwo}^2,\\
   \left\| \oh \left( t \right) \right\|_{\cFFSLtwo}^2 + 2\nu \int_0^t \left\| \nabla \oh \left( \tau \right)\right\|_{\cFFSLtwo}^2 \d \tau \leqslant \left\| \oh_0 \right\|_{\cFFSLtwo}^2.
\end{align*}
\end{lemma}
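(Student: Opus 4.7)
\textbf{Proof plan for Lemma \ref{lem:Ler_sol_ubar_omega}.} The key observation is that \eqref{eq:2DNS_stratified} is not a genuine three-dimensional system: the unknown $\bar{u}^h$ has only two components, the transport operator $\bar{u}^h\cdot\nabla_h$ differentiates only in the horizontal variable, and the incompressibility constraint $\diveh \bar{u}^h=0$ is two-dimensional as well. The vertical variable $x_3$ plays the role of a parameter, coupled through the full Laplacian $\nu\Delta$ which acts as a smoothing operator in all three directions. Consequently, the system can be treated by the classical Leray scheme exactly as for the two-dimensional Navier--Stokes equations, with $x_3$ only improving the estimates.

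I would proceed via a Friedrichs regularization. Let $J_n$ denote the spectral cut-off which truncates the Fourier support to the ball $\mathcal B_3(0,n)$, and consider the approximate problem
\begin{equation*}
\partial_t \bar{u}^h_n + J_n\bigl[(J_n \bar{u}^h_n)\cdot\nh (J_n \bar{u}^h_n)\bigr] - \nu\Delta J_n \bar{u}^h_n = -\nh \bar p_n,\qquad \bar{u}^h_n|_{t=0}=J_n \bar{u}^h_0,
\end{equation*}
which after application of the horizontal Leray projector $\mathbb{P}_h=\mathrm{Id}-\nh(-\Dh)^{-1}\diveh$ becomes a globally Lipschitz ODE on $L^2_n=J_n\cFFSLtwo$. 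Cauchy--Lipschitz then provides a unique global solution $\bar u^h_n\in \mathcal C(\R_+;L^2_n)$. Taking the $L^2$ scalar product of the equation against $\bar u^h_n$, the pressure gradient contribution vanishes by $\diveh \bar{u}^h_n=0$, and the nonlinearity vanishes as well since
\begin{equation*}
\int_{\R^3}\bigl((J_n \bar{u}^h_n)\cdot\nh (J_n \bar{u}^h_n)\bigr)\cdot J_n\bar{u}^h_n\,\dx
=\tfrac12\int_{\R^3}(J_n\bar u^h_n)\cdot\nh\abs{J_n\bar u^h_n}^2\,\dx=0
\end{equation*}
by integration by parts in $x_h$. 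This yields the uniform energy identity
\begin{equation*}
\norm{\bar{u}^h_n(t)}_{\cFFSLtwo}^2 + 2\nu\int_0^t\norm{\nabla \bar u^h_n(\tau)}_{\cFFSLtwo}^2\,\d\tau = \norm{J_n\bar u^h_0}_{\cFFSLtwo}^2\leqslant \norm{\bar u^h_0}_{\cFFSLtwo}^2.
\end{equation*}

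With this uniform bound in $L^\infty(\R_+;\cFFSLtwo)\cap L^2(\R_+;\dot H^1(\R^3))$, the nonlinear term and the Laplacian are uniformly bounded in, say, $L^{4/3}_{\mathrm{loc}}(\R_+;H^{-1}_{\mathrm{loc}})$ by a standard interpolation, so $\partial_t \bar u^h_n$ is uniformly controlled in that space. Aubin--Lions compactness then provides a subsequence converging strongly in $L^2_{\mathrm{loc}}(\R_+;L^2_{\mathrm{loc}}(\R^3))$ to some $\bar u^h$, with weak-$\star$ convergence in $L^\infty(\R_+;\cFFSLtwo)$ and weak convergence in $L^2(\R_+;\dot H^1(\R^3))$. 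Strong local convergence suffices to pass to the limit in the bilinear form $(J_n\bar u^h_n)\otimes (J_n\bar u^h_n)$, exactly as in the two-dimensional Leray theory, and lower semi-continuity of the norms transfers the energy bound to $\bar u^h$, establishing the first inequality. For the vorticity, one may either apply $-\partial_2\cdot_1 + \partial_1\cdot_2$ to the equation for $\bar u^h$ or repeat the same approximation scheme directly on \eqref{eq:2DNS_vorticity_stratified}; the $L^2$ estimate against $\bar\omega^h$ again eliminates the transport term because $\diveh \bar u^h=0$, yielding the second inequality.

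\textbf{Main obstacle.} The only point requiring some care is the passage to the limit in the nonlinear term: one must verify that the strong $L^2_{\mathrm{loc}}$ convergence extracted from Aubin--Lions is indeed enough to identify the limit of $(J_n\bar u^h_n)\cdot\nh(J_n\bar u^h_n)$ in the sense of distributions, exploiting that the nonlinearity involves only horizontal derivatives. This is completely analogous to the two-dimensional case and does not require any three-dimensional small-data hypothesis, which is why, in contrast with the full Navier--Stokes system, global existence of weak solutions holds for arbitrary $L^2$ data.
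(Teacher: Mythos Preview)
Your proposal is correct and follows the classical Leray--Friedrichs scheme, which is precisely what the paper has in mind: the paper explicitly states that ``the results stated in the following lemmas are classical, hence the proof is omitted.'' Your outline---spectral truncation, the cancellation of the transport term via $\diveh \bar u^h=0$, uniform energy bounds, and Aubin--Lions compactness---is the standard argument the paper is invoking, and your observation that $x_3$ enters only as a parameter (with the full Laplacian providing extra smoothing) is the right structural point.
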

 \begin{lemma}\label{lem:interpolation_L2_H1}
 Let $ U= U \left( x \right)$ be in $ L^2 \left( \R \right)\cap \dot{H}^1 \left( \R \right) $, then $ U\in L^\infty \left( \R \right) $ and 
 $$
 \left\| U \right\|_{L^\infty \left( \R \right)}\leqslant C \left\| U \right\|^{1/2}_{L^2 \left( \R \right)} \left\|  U' \right\|^{1/2}_{L^2 \left( \R \right)}.
 $$
 \end{lemma}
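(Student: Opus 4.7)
The plan is to prove this classical one-dimensional interpolation inequality by the standard fundamental-theorem-of-calculus argument, since the identity $U(x)^2 = 2\int_{-\infty}^x U(y)U'(y)\,dy$ is essentially the whole proof. I would first reduce to the case $U \in \mathcal{S}(\R)$ (or at least $\mathcal{D}(\R)$) by density, using that smooth compactly supported functions are dense in $L^2(\R) \cap \dot H^1(\R)$ equipped with the natural norm. Once this reduction is made, $U$ is continuous and decays at infinity, so $U(x)^2 \to 0$ as $x \to -\infty$ and we may integrate $(U^2)' = 2 U U'$ from $-\infty$ to $x$.

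The second step is the pointwise bound. For any $x \in \R$,
\begin{equation*}
U(x)^2 = 2 \int_{-\infty}^{x} U(y) U'(y) \, \d y \leqslant 2 \int_{\R} |U(y)| \, |U'(y)| \, \d y \leqslant 2 \left\| U \right\|_{L^2(\R)} \left\| U' \right\|_{L^2(\R)},
\end{equation*}
by Cauchy-Schwarz. Taking the supremum in $x$ and then a square root yields
\begin{equation*}
\left\| U \right\|_{L^\infty(\R)} \leqslant \sqrt{2}\, \left\| U \right\|_{L^2(\R)}^{1/2} \left\| U' \right\|_{L^2(\R)}^{1/2},
\end{equation*}
which is the claim with $C = \sqrt{2}$.

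The third step is to extend the inequality from the Schwartz class to all $U \in L^2(\R) \cap \dot H^1(\R)$ by approximation: take $U_n \in \mathcal{D}(\R)$ with $U_n \to U$ in $L^2$ and $U_n' \to U'$ in $L^2$. The estimate above shows $(U_n)_n$ is Cauchy in $L^\infty(\R)$, hence converges uniformly to a continuous bounded limit which must coincide (a.e.) with $U$, and the inequality passes to the limit.

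No serious obstacle arises: the only mildly delicate point is the density step, but for functions in $L^2 \cap \dot H^1$ on the whole line this is completely standard (e.g.\ convolution with a mollifier plus truncation by a cutoff, combined with Hardy's inequality or a direct estimate to control the error introduced by the cutoff on $\dot H^1$). An equivalent Fourier-side proof would bound $\|\hat U\|_{L^1}$ by splitting the integral at frequency $|\xi| = R$, estimating each piece by Cauchy-Schwarz, and optimizing in $R$; this also yields the same interpolation exponents $1/2, 1/2$, but the direct pointwise identity above is simpler.
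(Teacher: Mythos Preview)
Your proof is correct; this is the standard Agmon-type argument via the fundamental theorem of calculus and Cauchy--Schwarz, and it yields the sharp constant $C=\sqrt{2}$. The paper itself omits the proof entirely, stating only that the result is classical, so there is nothing to compare against.
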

 
 This is all we require in order to prove the following lemma, which is the main result which will allows us subsequently to prove that \eqref{eq:2DNS_stratified} is globally well posed in $ H^s \pare{\R^3}, \ s\geqslant1/2 $:
 
 \begin{lemma}\label{lem:bound_anisotropic_norm}
 Let $ \uh_0, \oh_0 $ satisfy the hypotheses of Lemma \ref{lem:Ler_sol_ubar_omega}, then
 \begin{align*}
  \uh \in L^4 \left( \R_+; L^4 \left( \R^2_h ; L^\infty \left( \R_v \right) \right) \right)= L^4 \left( \R_+; L^2_h \left( L^\infty_v \right) \right),\\
 \uh \in L^4 \left( \R_+; L^\infty \left( \R_v ; L^4 \left( \R^2_h \right) \right) \right)= L^4 \left( \R_+; L^\infty_v \left( L^4_h \right) \right),
 \end{align*}
 and for each $ t>0 $ the following bounds hold
 \begin{equation*}
 \int_0^t \left\| \uh \left( \tau \right) \right\|^4_{ L^\infty_v \left( L^4_h \right)} \d \tau  \leqslant
 \int_0^t \left\| \uh \left( \tau \right) \right\|^4_{ L^4_h \left( L^\infty_v \right)} \d \tau 
  \leqslant \frac{C K^2}{\nu}\left( \left\| \uh_0 \right\|_{\cFFSLtwo}^4 +\left\| \oh_0 \right\|_{\cFFSLtwo}^4 \right).
  \end{equation*}
 \end{lemma}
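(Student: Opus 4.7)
\smallskip

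\noindent\emph{Proof plan.} The first inequality in the claimed chain is immediate: it is exactly \eqref{eq:LpLqanisotropic ineq} applied with $X_1 = \mathbb{R}^2_h$, $X_2 = \mathbb{R}_v$, $p = 4$, $q = \infty$. All the work therefore goes into bounding the larger norm $\bigl\|\bar{u}^h\bigr\|_{L^4_h(L^\infty_v)}$ in $L^4_t$.

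For this I would use an anisotropic Gagliardo–Nirenberg argument. Fix $x_h \in \mathbb{R}^2_h$; applying Lemma \ref{lem:interpolation_L2_H1} to the function $x_3 \mapsto \bar{u}^h(x_h, x_3)$ gives
\begin{equation*}
\sup_{x_3 \in \mathbb{R}} \bigl|\bar{u}^h(x_h, x_3)\bigr|^{2} \leqslant C\, \|\bar{u}^h(x_h,\cdot)\|_{L^2_v}\, \|\partial_3 \bar{u}^h(x_h,\cdot)\|_{L^2_v}.
\end{equation*}
Squaring, integrating in $x_h$, and applying the Cauchy–Schwarz inequality horizontally, I obtain
\begin{equation*}
\bigl\|\bar{u}^h\bigr\|_{L^4_h(L^\infty_v)}^{4} \leqslant C \bigl\|\,\|\bar{u}^h(\cdot,\cdot)\|_{L^2_v}\,\bigr\|_{L^4_h}^{2}\;\bigl\|\,\|\partial_3 \bar{u}^h(\cdot,\cdot)\|_{L^2_v}\,\bigr\|_{L^4_h}^{2}.
\end{equation*}
Each of these two $L^4_h$ norms I estimate by the two-dimensional Ladyzhenskaya inequality $\|f\|_{L^4(\mathbb{R}^2)}^{2} \leqslant C\|f\|_{L^2(\mathbb{R}^2)}\|\nabla f\|_{L^2(\mathbb{R}^2)}$, noting that $|\nabla_h\|\bar{u}^h(x_h,\cdot)\|_{L^2_v}| \leqslant \|\nabla_h \bar{u}^h(x_h,\cdot)\|_{L^2_v}$ (and similarly for $\partial_3 \bar u^h$) by differentiating under the integral. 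This yields the key pointwise-in-time bound
\begin{equation*}
\bigl\|\bar{u}^h(\tau)\bigr\|_{L^4_h(L^\infty_v)}^{4} \leqslant C\, \|\bar{u}^h\|_{L^2}\,\|\nabla_h \bar{u}^h\|_{L^2}\,\|\partial_3 \bar{u}^h\|_{L^2}\,\|\nabla_h \partial_3 \bar{u}^h\|_{L^2}.
\end{equation*}

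I then integrate in time, placing the two norms of horizontal derivatives of $\bar{u}^h$ in $L^\infty_t$ and the two involving $\partial_3$ in $L^2_t$ (via Hölder). The required bounds come from Lemma \ref{lem:Ler_sol_ubar_omega} together with the planar Biot–Savart law $\bar{u}^h(\cdot,x_3) = \nabla_h^\perp(-\Delta_h)^{-1}\bar{\omega}^h(\cdot,x_3)$ applied slice-by-slice: $L^2_h$-boundedness of horizontal Riesz transforms gives $\|\nabla_h \bar{u}^h\|_{L^2} \lesssim \|\bar{\omega}^h\|_{L^2}$ and, upon differentiating in $x_3$, $\|\nabla_h \partial_3 \bar{u}^h\|_{L^2} \lesssim \|\partial_3 \bar{\omega}^h\|_{L^2} \leqslant \|\nabla \bar{\omega}^h\|_{L^2}$. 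Using $\|\partial_3 \bar u^h\|_{L^2}\leqslant\|\nabla \bar u^h\|_{L^2}$ and the Lemma \ref{lem:Ler_sol_ubar_omega} estimates, the product telescopes to $\tfrac{C}{2\nu}\|\bar u^h_0\|_{L^2}^2\|\bar\omega^h_0\|_{L^2}^2$, which is dominated by $\tfrac{C}{\nu}(\|\bar u^h_0\|_{L^2}^4+\|\bar\omega^h_0\|_{L^2}^4)$ via $ab\leqslant\tfrac12(a^2+b^2)$.

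The main obstacle (and the reason the vorticity has to enter) is that we only control $\bar{u}^h_0$ in $L^2$, so $\partial_3 \bar{u}^h \notin L^\infty_t L^2$ in general and no $L^\infty_t$ slot is available for the $\partial_3$ factors. The resolution is structural: put both $\partial_3$-factors in $L^2_t$, where they are controlled through the parabolic gain $\nu^{-1/2}$, and use the Biot–Savart identity to replace the uncontrolled-in-time $\partial_3 \bar{u}^h$ with $\partial_3 \bar{\omega}^h$ modulo horizontal Riesz transforms. The fact that the vorticity equation \eqref{eq:2DNS_vorticity_stratified} is a pure transport–diffusion equation (no vertical transport term arises because $\bar{u}^h$ has no third component) is essential for Lemma \ref{lem:Ler_sol_ubar_omega} to apply to $\bar{\omega}^h$ and furnish the bound on $\|\nabla \bar{\omega}^h\|_{L^2_t L^2}$.
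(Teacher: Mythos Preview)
Your proof is correct and follows essentially the same route as the paper's. The only cosmetic difference is in how you reach the pointwise-in-time bound $\|\bar u^h\|_{L^4_h(L^\infty_v)}^4 \leqslant C\|\bar u^h\|_{L^2}\|\nabla_h\bar u^h\|_{L^2}\|\partial_3\bar u^h\|_{L^2}\|\nabla_h\partial_3\bar u^h\|_{L^2}$: the paper first swaps the order of integration via \eqref{eq:LpLqanisotropic ineq} to pass from $L^4_h(L^2_v)$ to $L^2_v(L^4_h)$ and then applies the two-dimensional Gagliardo--Nirenberg inequality slice by slice in $x_3$, whereas you apply Ladyzhenskaya directly to the scalar function $x_h\mapsto\|\bar u^h(x_h,\cdot)\|_{L^2_v}$ and use $|\nabla_h\|f\|_{L^2_v}|\leqslant\|\nabla_h f\|_{L^2_v}$; both arguments yield the same inequality, and the subsequent use of the horizontal Biot--Savart law, the Cauchy--Schwarz time splitting, and Lemma~\ref{lem:Ler_sol_ubar_omega} is identical.
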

 
 \begin{proof}
 Let us start considering the value $ \left\| \uh \right\|_{ L^4_h \left( L^\infty_v \right)}^4 $, applying Lemma \ref{lem:interpolation_L2_H1} we deduce
 \begin{equation*}
 \left\| \uh \right\|_{ L^4_h \left( L^\infty_v \right)}^4 \leqslant C \left\| \uh \right\|_{ L^4_h \left( L^2 _v \right)}^2 \left\| \partial_3\uh \right\|_{L^4_h \left( L^2 _v \right)}^2.
 \end{equation*}
 By use of \eqref{eq:LpLqanisotropic ineq} and a Gagliardo-Nirenberg interpolation inequality we deduce
 \begin{align*}
 \left\| \uh \right\|_{L^4_h \left( L^2 _v \right)}^2
 \leqslant & 
 \left\| \uh \right\|_{L^2_v \left( L^4 _h \right)}^2\\
  \leqslant & \ C  \left\| \uh \right\|_{\cFFSLtwo} \left\| \nh \uh \right\|_{\cFFSLtwo},\\
  \left\| \partial_3 \uh \right\|_{L^4_h \left( L^2 _v \right)}^2
\leqslant & 
 \left\| \partial_3 \uh \right\|_{L^2_v \left( L^4 _h \right)}^2\\  
   \leqslant & \ C  \left\| \partial_3 \uh \right\|_{\cFFSLtwo} \left\|  \partial_3\nh \uh \right\|_{\cFFSLtwo},
 \end{align*}
 whence we deduce 
 \begin{equation}\label{eq:bound_anisotropic_norm1}
 \begin{aligned}
 \left\| \uh \right\|_{L^4_h \left( L^\infty_v \right)}^4 \leqslant & \ C \left\| \uh \right\|_{\cFFSLtwo} \left\| \nh \uh \right\|_{\cFFSLtwo}  \left\| \partial_3 \uh \right\|_{\cFFSLtwo} \left\|  \partial_3\nh \uh \right\|_{\cFFSLtwo}, \\
 \leqslant & \ C K^2 \left\| \uh \right\|_{\cFFSLtwo} \left\| \oh \right\|_{\cFFSLtwo} \left\| \partial_3 \uh \right\|_{\cFFSLtwo} \left\|  \partial_3\oh \right\|_{\cFFSLtwo},\\
 \leqslant & \ C K^2 \left\| \uh \right\|_{\cFFSLtwo} \left\| \oh \right\|_{\cFFSLtwo} \left\| \nabla \uh \right\|_{\cFFSLtwo} \left\|  \nabla \oh \right\|_{\cFFSLtwo}
  \end{aligned}
 \end{equation}
 where in the second inequality we used the fact that the map $ \oh \mapsto \nh \uh $ is a Calderon-Zygmund application of norm $ K $. Integrating in time \eqref{eq:bound_anisotropic_norm1} using a Cauchy-Schwarz inequality and the results of Lemma \ref{lem:Ler_sol_ubar_omega} we deduce the inequality
 \begin{equation*}
 \int_0^t \left\| \uh \left( \tau \right) \right\|^4_{ L^4_h \left( L^\infty_v \right)} \d \tau 
  \leqslant \frac{C K^2}{\nu}\left( \left\| \uh_0 \right\|_{\cFFSLtwo}^4 +\left\| \oh_0 \right\|_{\cFFSLtwo}^4 \right).
 \end{equation*}
 To complete the proof it suffice hence to apply \eqref{eq:LpLqanisotropic ineq}.
 \end{proof}
 
 Lemma \ref{lem:bound_anisotropic_norm} is the cornerstone of the proof of the propagation of the isotropic Sobolev regularity, which is formalized in the following proposition
 
 \begin{prop} \label{prop:strong_reg_ubar}
 Let $ \uh_0 \in H^s \left( \R^3 \right), \ s>0 $ and $ \oh_0 \in {\cFFSLtwo} $, then the weak solution $ \uh  $ of \eqref{eq:2DNS_stratified} which exists thanks to Lemma \ref{lem:Ler_sol_ubar_omega} belongs to the space 
 $$
 \uh \in L^\infty\left( \R_+;  H^s \left( \R^3 \right) \right) , \hspace{1cm} 
 \nabla \uh \in L^2 \left( \R_+;  H^s \left( \R^3 \right) \right),
 $$
 and for each $ t>0 $ the following bound holds
 \begin{multline}\label{eq:Hs_bound_uh}
 \left\| \uh \left( t \right) \right\|_{ H^s \left( \R^3 \right)}^2 + \nu \int_0^t \left\| \nabla \uh \left( \tau \right) \right\|_{ H^s \left( \R^3 \right)}^2 \d \tau
 \\
  \leqslant C \left\| \uh_0 \right\|_{ H^s \left( \R^3 \right)}^2 \exp \set{ \frac{C K^2}{\nu}\left( \left\| \uh_0 \right\|_{\cFFSLtwo}^4 +\left\| \oh_0 \right\|_{\cFFSLtwo}^4 \right)}.
 \end{multline}
 \end{prop}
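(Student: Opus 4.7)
The plan is to run an $H^s$ energy estimate on \eqref{eq:2DNS_stratified} and close it via Gronwall's lemma, the driving term being controlled by the anisotropic bound of Lemma \ref{lem:bound_anisotropic_norm}. First, I apply the dyadic block $\tq$ to the momentum equation, take the $L^2$ inner product against $\tq \uh$ and integrate by parts. Because $\diveh \uh = 0$, the horizontal pressure term vanishes, yielding
\begin{equation*}
\frac{1}{2}\frac{\d}{\d t}\|\tq \uh\|_{L^2}^2 + \nu \|\tq \nabla \uh\|_{L^2}^2 = -\pare{\tq(\uh\cdot \nh \uh)\,|\,\tq \uh}_{L^2}.
\end{equation*}

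Next, I expand the transport term via the Bony decomposition \eqref{eq:Bonydec},
\begin{equation*}
\tq(\uh\cdot \nh \uh) = \sumf \tq\pare{S_{q'-1} \uh \cdot \tQ (\nh \uh)} + \sumi \tq\pare{S_{q'+2} \nh\uh \cdot \tQ \uh},
\end{equation*}
and estimate each piece in $L^2$ using anisotropic H\"older, putting the undifferentiated factor in $L^4_h(L^\infty_v)$ and the remaining factor in an $L^4_h(L^2_v)$-type norm; a vertical Bernstein bound (Lemma \ref{lemma:Bernstein}) converts the $L^\infty_v$ norm on a localized block into an $L^2_v$ norm at a summable dyadic cost. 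After multiplying by $2^{2qs}$, summing over $q$ with the usual manipulations on the off-diagonal sums encoded by \eqref{eq:DeltaqHs}, and applying Cauchy-Schwarz together with Young's inequality to absorb a fraction of $\nu \|\nabla \uh\|_{H^s}^2$ into the parabolic left-hand side, one reaches a closed differential inequality of the form
\begin{equation*}
\frac{\d}{\d t}\|\uh\|_{H^s}^2 + \nu \|\nabla \uh\|_{H^s}^2 \leqslant \frac{C}{\nu}\|\uh\|_{L^4_h(L^\infty_v)}^4 \|\uh\|_{H^s}^2.
\end{equation*}

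Integrating in time and applying Gronwall then gives
\begin{equation*}
\|\uh(t)\|_{H^s}^2 + \nu \int_0^t \|\nabla \uh(\tau)\|_{H^s}^2 \d\tau \leqslant C\|\uh_0\|_{H^s}^2 \exp\set{\frac{C}{\nu}\int_0^t \|\uh(\tau)\|_{L^4_h(L^\infty_v)}^4 \d\tau},
\end{equation*}
into which one inserts the anisotropic bound of Lemma \ref{lem:bound_anisotropic_norm} to recover \eqref{eq:Hs_bound_uh}. The hardest step is the paraproduct estimate: since no isotropic $L^\infty$ control on $\uh$ is available, the decomposition must systematically exploit the $(x_h,x_3)$ anisotropy, placing the undifferentiated factor in $L^4_h(L^\infty_v)$, a norm for which Lemma \ref{lem:bound_anisotropic_norm} supplies precisely the required uniform-in-time $L^4_t$ integrability, and relying on the purely horizontal nature of $\nh$ in the nonlinearity to keep the vertical Bernstein cost under control.
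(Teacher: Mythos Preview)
Your overall strategy---localised $H^s$ energy estimate, Bony decomposition, anisotropic H\"older placing one factor of $\uh$ in $L^4_h(L^\infty_v)$ (equivalently $L^\infty_v(L^4_h)$), Young's inequality to absorb, Gronwall, and closure via Lemma~\ref{lem:bound_anisotropic_norm}---is exactly the paper's. The paper even reaches the identical differential inequality you wrote (with $L^\infty_v(L^4_h)$ in place of $L^4_h(L^\infty_v)$, which is harmless since Lemma~\ref{lem:bound_anisotropic_norm} controls both).

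There is, however, one concrete step where your sketch and the paper diverge, and where your version does not close as written. The paper first uses $\diveh\uh=0$ to write the bilinear term as $\bigl(\tq(\uh\otimes\uh)\,\big|\,\tq\nh\uh\bigr)_{L^2}$ and only \emph{then} applies Bony to the symmetric tensor $\uh\otimes\uh$. In your decomposition of $\uh\cdot\nh\uh$ directly, the remainder-type piece is $\sumi\tq\bigl(S_{q'+2}\nh\uh\cdot\tQ\uh\bigr)$: the horizontal gradient sits on the \emph{low-frequency} factor $S_{q'+2}\uh$. If you place the undifferentiated factor $\tQ\uh$ in $L^4_h(L^\infty_v)$ as you propose, the remaining factor $S_{q'+2}\nh\uh$ must go into an $L^4_h(L^2_v)$- or $L^2$-type norm, and neither choice produces a quantity controlled by $\|\nh\uh\|_{H^s}$ with the correct dyadic weight---the sum over $q'>q-4$ diverges for $0<s\leqslant 1/2$. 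The ``vertical Bernstein'' you invoke does not repair this: on isotropic blocks it is equivalent to the horizontal Gagliardo--Nirenberg step the paper uses, but that step only helps on the \emph{high-frequency} localized factor, not on the low-frequency cumulant carrying the derivative.

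The fix is precisely the paper's: integrate by parts (i.e.\ pass to divergence form) \emph{before} decomposing, so that both factors in the paradifferential analysis are undifferentiated and the two Bony pieces become symmetric. After that, your H\"older split and the Gagliardo--Nirenberg bound $\|\tQ\uh\|_{L^2_vL^4_h}\leqslant C\|\tQ\uh\|_{L^2}^{1/2}\|\tQ\nh\uh\|_{L^2}^{1/2}$ give
\[
\bigl|\bigl(\tq(\uh\otimes\uh)\,\big|\,\tq\nh\uh\bigr)_{L^2}\bigr|\leqslant C\,b_q\,2^{-2qs}\,\|\uh\|_{L^\infty_v(L^4_h)}\,\|\uh\|_{H^s}^{1/2}\,\|\nh\uh\|_{H^s}^{3/2},
\]
with $(b_q)_q\in\ell^1$, and the rest of your argument goes through verbatim.
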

 
 \begin{proof}
  Let us apply the operator $ \cFFStq $ to the equation \eqref{eq:2DNS_stratified} and multiply it for $ \cFFStq \uh $ and integrate in space, we deduce
  \begin{equation}\label{eq:energy_est1}
   \frac{1}{2}\ \frac{\d}{\d t} \left\| \cFFStq \uh \right\|_{{\cFFSLtwo}}^2 + \nu \left\| \cFFStq \nabla \uh \right\|_{{\cFFSLtwo}}^2 \leqslant \left| \left(\left. \cFFStq \left( \uh \cdot\nh \uh \right) \right|   \cFFStq \uh \right)_{\cFFSLtwo} \right|.
  \end{equation}
  Indeed, since $ \diveh \uh =0 $ and integrating by parts,
  \begin{equation*}
   \left| \left(\left. \cFFStq \left( \uh \cdot\nh \uh \right) \right|   \cFFStq \uh \right)_{\cFFSLtwo} \right|=  \left| \left(\left. \cFFStq \left( \uh \otimes \uh \right) \right|   \cFFStq \nh \uh \right)_{\cFFSLtwo} \right|.
  \end{equation*}
  Applying Bony decomposition we deduce
  \begin{multline*}
   \left| \left(\left. \cFFStq \left( \uh \otimes \uh \right) \right|   \cFFStq \nh \uh \right)_{\cFFSLtwo} \right| \\
   \leqslant \sumf 
    \left| \left(\left. \cFFStq \left( S_{q'-1} \uh \otimes \triangle_{q'}\uh \right) \right|   \cFFStq \nh \uh \right)_{\cFFSLtwo} \right| \\
     +
    \sumi  \left| \left(\left. \cFFStq \left( \triangle_{q'}\uh \otimes  S_{q'+2}\uh \right) \right|   \cFFStq \nh \uh \right)_{\cFFSLtwo} \right|=
    I_{1,q} + I_{2,q}.
  \end{multline*}
  Since the operators $ \cFFStq, \cFFSSq $ map continuously any $ {\cFFSLp} $ space to itself and by H\"older inequality  we deduce
  \begin{equation}
  \label{eq:bound_Iq1}
  \begin{aligned}
  I_{1,q} \leqslant & \ C \left\| \uh \right\|_{L^\infty_v \left( L^4_h \right)} \sumf  \left\| \triangle_{q'}\uh \right\|_{L^2_v \left( L^4_h \right)} \left\| \cFFStq \nh \uh \right\|_{\cFFSLtwo},\\
  \leqslant & \ C \left\| \uh \right\|_{L^\infty_v \left( L^4_h \right)} \sumf  \left\| \triangle_{q'}\uh \right\|_{\cFFSLtwo}^{1/2} \left\| \cFFStq \nh \uh \right\|_{\cFFSLtwo}^{3/2},\\
  \leqslant & \ C b_q 2^{-2qs} \left\| \uh \right\|_{L^\infty_v \left( L^4_h \right)}   \left\| \uh \right\|_{ H^s \left( \R^3 \right)}^{1/2} \left\|  \nh \uh \right\|_{ H^s \left( \R^3 \right)}^{3/2}.
  \end{aligned}
    \end{equation}
  In the second inequality we used a Gagliardo-Nirenberg inequality and in the third one the regularity properties of dyadic blocks. The sequence $ \left( b_q \right)_q  \in \ell^1 \left( \mathbb{Z} \right) $. For the term $ I_{2,q} $ we can apply the very same procedure to deduce the same bound
  \begin{equation}\label{eq:bound_Iq2}
  I_{2,q}\leqslant  C b_q 2^{-2qs} \left\| \uh \right\|_{L^\infty_v \left( L^4_h \right)}   \left\| \uh \right\|_{ H^s \left( \R^3 \right)}^{1/2} \left\|  \nh \uh \right\|_{ H^s \left( \R^3 \right)}^{3/2},
  \end{equation}
  but in this case the sequence $ \left( b_q \right)_q $, which is $ \ell^1 $, assumes the convolution form
  $$
  b_q = c_q \sumi 2^{-\left( q'-q \right)s } c_{q'}.
  $$
  Thanks to \eqref{eq:bound_Iq1}, \eqref{eq:bound_Iq2} we hence deduced that
  \begin{equation}\label{eq:bound_bilinear}
  \left| \left(\left. \cFFStq \left( \uh \cdot\nh \uh \right) \right|   \cFFStq \uh \right)_{\cFFSLtwo} \right| \leqslant C b_q 2^{-2qs} \left\| \uh \right\|_{L^\infty_v \left( L^4_h \right)}   \left\| \uh \right\|_{ H^s \left( \R^3 \right)}^{1/2} \left\|  \nh \uh \right\|_{ H^s \left( \R^3 \right)}^{3/2}.
  \end{equation}
  With the bound \eqref{eq:bound_bilinear} applied to \eqref{eq:energy_est1} we deduce
  \begin{equation}\label{eq:energy_est2}
  \frac{1}{2}\ \frac{\d}{\d t} \left\| \cFFStq \uh \right\|_{{\cFFSLtwo}}^2 + \nu \left\| \cFFStq \nabla \uh \right\|_{{\cFFSLtwo}}^2 \leqslant C b_q 2^{-2qs} \left\| \uh \right\|_{L^\infty_v \left( L^4_h \right)}   \left\| \uh \right\|_{ H^s \left( \R^3 \right)}^{1/2} \left\|  \nh \uh \right\|_{ H^s \left( \R^3 \right)}^{3/2}, 
  \end{equation}
  hence, multiplying \eqref{eq:energy_est2} for $ 2^{2qs} $, summing on $ q\in \ZZ $ and using the convexity inequality $ ab \leqslant C a^4 +\frac{\nu}{2} b^{4/3} $ we deduce the bound
  \begin{equation}\label{eq:energy_est3}
  \frac{\d}{\d t} \left\| \uh \right\|_{ H^s \left( \R^3 \right)}^2 + \nu \left\| \nabla \uh \right\|_{ H^s \left( \R^3 \right)}^2 \leqslant \left\| \uh \right\|_{L^\infty_v \left( L^4_h \right)}^4 \left\| \uh \right\|_{ H^s \left( \R^3 \right)}^2.
  \end{equation}
  it suffice hence to apply Gronwall inequality on \eqref{eq:energy_est3} and consider the result of Lemma \ref{lem:bound_anisotropic_norm} to deduce the bound \eqref{eq:Hs_bound_uh}.
 \end{proof}
 
The following result is a direct deduction of the above proposition, the proof is hence omitted. 
 
 \begin{cor}
The solutions of \eqref{eq:2DNS_stratified} are $ \cFFSHud $-stable if the initial data belong to the space $ \cFFSLtwo\cap \cFFSHud $.
 \end{cor}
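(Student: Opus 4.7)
The point is to propagate the $\dot H^{1/2}(\R^3)$ regularity globally in time for the limit system \eqref{eq:2DNS_stratified}, starting from an initial datum in $\cFFSLtwo\cap\cFFSHud = H^{1/2}(\R^3)$. Since $\PP_0$ is a bounded Fourier multiplier (see \eqref{eq:definition_P0}), the projected datum $\bar u_0^h = \PP_0 U_0$ inherits the $H^{1/2}$ regularity, and the baseline $L^2$ control $\bar u^h\in L^\infty(\R_+;\cFFSLtwo)\cap L^2(\R_+;\dot H^1)$ is granted by Lemma \ref{lem:Ler_sol_ubar_omega}. The strategy is to re-run the paradifferential energy estimate leading to \eqref{eq:energy_est3} at the critical index $s=1/2$, which produces
\begin{equation*}
\frac{d}{dt} \|\bar u^h\|_{H^{1/2}}^2 + \nu \|\nabla \bar u^h\|_{H^{1/2}}^2 \leq C \|\bar u^h\|_{L^\infty_v L^4_h}^4 \, \|\bar u^h\|_{H^{1/2}}^2,
\end{equation*}
and to close it by Grönwall's inequality, without appealing to an $L^2$ bound on the vorticity (which is not granted by our hypothesis).

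The second ingredient, and the substitute for Lemma \ref{lem:bound_anisotropic_norm}, will be an anisotropic Gagliardo--Nirenberg bound in which the anisotropic norm $\|\bar u^h\|_{L^\infty_v L^4_h}$ is estimated directly in terms of $\dot H^{1/2}$--type norms. First I apply the one-dimensional interpolation $\|f\|_{L^\infty_v}\lesssim \|f\|_{L^2_v}^{1/2}\|\partial_3 f\|_{L^2_v}^{1/2}$ fibrewise, then I use the two-dimensional Sobolev embedding $\dot H^{1/2}(\R^2_h)\hookrightarrow L^4(\R^2_h)$ slice by slice, and finally the elementary comparison $\|\cdot\|_{L^2_v \dot H^{1/2}_h}\leq \|\cdot\|_{\dot H^{1/2}(\R^3)}$ (which follows from $|\xi_h|\leq |\xi|$ in Fourier). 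This should yield
\begin{equation*}
\|\bar u^h\|_{L^\infty_v L^4_h}^4 \leq C\,\|\bar u^h\|_{\dot H^{1/2}}^2\, \|\bar u^h\|_{\dot H^{3/2}}^2.
\end{equation*}

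Substituting back, and using $\|\bar u^h\|_{\dot H^{3/2}}^2\leq \|\nabla \bar u^h\|_{H^{1/2}}^2$, I arrive at the dissipative differential inequality
\begin{equation*}
\frac{d}{dt} \|\bar u^h\|_{H^{1/2}}^2 + \bigl(\nu - C \|\bar u^h\|_{\dot H^{1/2}}^2 \|\bar u^h\|_{H^{1/2}}^2\bigr) \|\nabla \bar u^h\|_{H^{1/2}}^2 \leq 0.
\end{equation*}
For small data the parenthesis is positive from the start and global boundedness is immediate.

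\textbf{Main obstacle.} At the critical regularity $\dot H^{1/2}$, large-data global propagation does not follow from a naive parabolic absorption. The hard part is to close the argument without a smallness assumption on $\|U_0\|_{\dot H^{1/2}}$. My plan to resolve this is to exploit the \emph{two-dimensional structure} of \eqref{eq:2DNS_stratified}, namely the absence of vortex stretching: the global $L^2$ energy dissipation gives $\int_0^\infty \|\nabla \bar u^h\|_{L^2}^2\, d\tau \leq \|\bar u_0^h\|_{L^2}^2/(2\nu)$, and a combination of this with the interpolation $\|\bar u^h\|_{\dot H^1}^2\leq \|\bar u^h\|_{\dot H^{1/2}}\|\bar u^h\|_{\dot H^{3/2}}$ should produce an integrable forcing term in a Grönwall estimate at the $H^{1/2}$ level, closing the bootstrap globally in time. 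Uniqueness and continuous dependence then follow from a standard stability estimate on the difference of two solutions, run at the $L^2$ level in the spirit of Lions--Prodi, using the already-obtained $H^{1/2}$ bound to control the trilinear term.
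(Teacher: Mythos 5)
There is a genuine gap, and it is exactly at the point your proposal flags as the ``main obstacle'': the large-data closure. Your anisotropic inequality $\norm{\uh}_{L^\infty_v\pare{L^4_h}}^4\lesssim \norm{\uh}_{\cFFSHud}^2\norm{\uh}_{\dot{H}^{3/2}\left(\R^3\right)}^2$ is correct, but plugging it into the $H^{1/2}$ energy estimate makes the Grönwall factor involve precisely the quantities you are trying to bound, so after absorption you only get the small-data statement $\nu>C\norm{\uh}_{\dot H^{1/2}}^2\norm{\uh}_{H^{1/2}}^2$ --- whereas the whole point of this corollary (and of the paper) is the absence of any smallness assumption. The proposed repair does not work: the energy identity of Lemma \ref{lem:Ler_sol_ubar_omega} controls $\int_0^\infty\norm{\nabla\uh}_{L^2}^2\,\d\tau$, and the interpolation $\norm{\uh}_{\dot H^1}^2\leqslant\norm{\uh}_{\dot H^{1/2}}\norm{\uh}_{\dot H^{3/2}}$ goes in the wrong direction, so finiteness of $\int\norm{\nabla\uh}_{L^2}^2$ gives no control of $\int\norm{\uh}_{\dot H^{1/2}}^2\norm{\uh}_{\dot H^{3/2}}^2\,\d\tau\sim\int\norm{\uh}_{\dot H^1}^4\,\d\tau$, which is exactly the blow-up functional \eqref{eq:BU_condition}; if the Leray energy bound alone could make that factor integrable, the same argument would give global regularity for the full 3D Navier--Stokes equations.

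The paper closes this loop differently, and this is the ingredient you explicitly discarded: since the vorticity $\oh$ solves the pure transport-diffusion equation \eqref{eq:2DNS_vorticity_stratified} (no stretching), the hypothesis $\oh_0\in\cFFSLtwo$ --- which is part of the standing assumptions (it appears in Theorem \ref{thm:main_result} and in Lemmas \ref{lem:Ler_sol_ubar_omega}, \ref{lem:bound_anisotropic_norm}) --- yields a global bound on $\norm{\oh}_{L^\infty_t L^2}$ and $\int\norm{\nabla\oh}_{L^2}^2$ independent of the $H^{1/2}$ norm being propagated. Lemma \ref{lem:bound_anisotropic_norm} then gives an \emph{a priori} bound on $\int_0^\infty\norm{\uh}_{L^\infty_v\pare{L^4_h}}^4\,\d\tau$ in terms of $\norm{\uh_0}_{L^2}$ and $\norm{\oh_0}_{L^2}$ only, so the Grönwall factor in \eqref{eq:energy_est3} is integrable and Proposition \ref{prop:strong_reg_ubar} with $s=1/2$ gives the global $H^{1/2}$ bound for arbitrary data; the corollary is then an immediate consequence (plus the standard $L^2$ difference estimate you sketch, which is fine). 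Without some substitute for the vorticity bound, your scheme proves only small-data stability, so as written the proof does not establish the stated result in the generality intended.
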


\section{Dispersive properties}
\label{sec:dispersive_properties}

We recall that, for $0 < r < R$, in \eqref{eq:CrR}, we defined
\begin{equation*}
	\Crr = \set{ \xi \in \R^3_\xi : \left| \xi_h \right|>r, \ \left| \xi \right|<R }. 
\end{equation*}
Let $\psi$ a $\mathcal{C}^{\infty}$-function from $\RR^3$ to $\RR$ such that
\begin{equation*}
	\chi(\xi) = \left\{ \begin{aligned} &1 \qquad\mbox{if } \quad 0 \leqslant \abs{\xi} \leqslant 1\\&0 \qquad\mbox{if} \quad \abs{\xi} \geqslant 2 \end{aligned} \right.
\end{equation*}
and $\Psi_{r, R}: \RR^3 \to \RR$ the following frequency cut-off function
\begin{equation}
	\label{eq:PsirR} \Psi_{r, R}(\xi) = \chi \pare{\frac{\abs{\xi}}{R}} \pint{1- \chi \pare{\frac{\abs{\xi_h}}{r}} } .
\end{equation}
Then, we have $\Psi_{r, R} \in \mathcal{D}(\RR^3)$, $\mbox{supp\,} \Psi_{r, R} \subset \mathcal{C}_{\frac{r}2,2R}$ and $\Psi_{r, R} \equiv 1 \mbox{ on }\mathcal{C}_{r,R}$. 
Indeed the operator $ \Psi_{r, R} $ maps 
any tempered distribution $f$ to
\begin{equation}
	\label{eq:PrR}
	 \Psi_{r, R}(D) f = \mathcal{F}^{-1} \pare{\Psi_{r, R}(\xi) \widehat{f}(\xi)},
	\end{equation}
	with this in mind we want to study the following linear system
	\begin{equation}
	\label{eq:free_wave_CFFS}
	\left\lbrace
	\begin{aligned}
	 & \partial_t \Wrr     + \frac{1}{\varepsilon}\ L_\varepsilon \Wrr =
-\Psi_{r, R} \left( D \right) \left( \PP_{+, \varepsilon} + \PP_{-, \varepsilon} \right) \Lambda \left( \uh \right) 
	 ,\\
	 & \dive w^\varepsilon_{r, R}=0,\\
	 & \left. \Wrr \right|_{t=0} =\Psi_{r, R} \left( D \right) \left( \PP_{+, \varepsilon} + \PP_{-, \varepsilon} \right) U_0,
	\end{aligned}
	\right.
	\end{equation}
	where $ \PP_{\pm, \varepsilon} $ is the projection respectively onto the space $ \mathbb{C} E_\pm^\varepsilon $ defined in \eqref{eq:projectors} and $ L_\varepsilon $ is defined in \eqref{eq:def_Leps}. We stress out the fact that Lemma \ref{lem:boundedness_projectors} implies that the maps $ \PP_{i, \varepsilon} $ are bounded operators onto $ L^2 $ as long as we consider functions localized on the set $ \Crr $.\\
	The forcing term $ \Lambda $ appearing on the right-hand-side of \eqref{eq:free_wave_CFFS} is defined as
\begin{equation}
\label{eq:def_Lambda}
\Lambda \left( \uh \right)=
\left( 
\begin{array}{c}
0\\
0\\
\partial_3 \bar{p} \left( \uh \right)\\
0
\end{array}
 \right),	
\end{equation}	
	where the scalar function $ \bar{p} $ the limit pressure of the limit system \eqref{eq:2DNS_stratified}. We expressed the nonlinearity $ \Lambda $ as depending on the velocity flow $ \uh $, but in the above definition the dependence on $ \bar{p} $ is made explicit. Indeed we can express $ \bar{p} $ it in term of $ \uh $ as
	\begin{equation}
	\label{eq:def_pbar}
	\begin{aligned}
	\bar{p} = & \ \left( -\Dh \right)^{-1} \diveh \left( \uh \cdot \nh \uh \right),\\
	= & \ \left( -\Dh \right)^{-1} \diveh \diveh \left( \uh \otimes \uh \right),
	\end{aligned}
	\end{equation}
	and this justifies the above observation.\\

The forcing term $ \Lambda $	 presents an interesting property

\begin{lemma}\label{lem:P0Lambda=0}
Let $ \PP_0 $ be the projector onto the non-oscillating subspace  defined in \eqref{eq:definition_P0}, then
\begin{equation*}
\PP_0 \Lambda =0.
\end{equation*}
\end{lemma}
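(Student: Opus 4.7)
The proof proposal is essentially a direct computation once we invoke the explicit formula for $\mathbb{P}_0$ established in \eqref{eq:definition_P0}. First I would recall that the projector $\mathbb{P}_0$ admits the matricial representation
\begin{equation*}
\mathbb{P}_0 V = (-\Delta_h)^{-1}
\begin{pmatrix}
\partial_2^2 & -\partial_1\partial_2 & 0 & 0 \\
-\partial_1\partial_2 & \partial_1^2 & 0 & 0 \\
0 & 0 & 0 & 0 \\
0 & 0 & 0 & 0
\end{pmatrix} V,
\end{equation*}
so that $\mathbb{P}_0 V$ depends \emph{only} on the first two scalar components $V^1, V^2$ of $V$, and furthermore its third and fourth components always vanish.

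Next I would simply apply this formula to $V = \Lambda(\bar{u}^h)$. By the definition \eqref{eq:def_Lambda}, $\Lambda(\bar{u}^h) = (0,0,\partial_3 \bar{p},0)^\intercal$, so that $\Lambda^1 = \Lambda^2 = 0$. Substituting into the matricial expression above gives $\mathbb{P}_0 \Lambda = 0$ immediately, which is the desired claim.

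There is no real obstacle in this proof; the statement is a straightforward algebraic consequence of the structure of the non-oscillating eigenspace $\mathbb{C} E_0$ identified in \eqref{eq:eigemvectors_0}. Conceptually, the point is that $E_0(\xi)$ is supported (in Fourier variables) on the first two components, while $\Lambda$ is supported on the third component; hence the two are orthogonal in the relevant sense and the projection necessarily vanishes. One may equivalently verify this through the Fourier-side formula $\widehat{\mathbb{P}_0 V}(\xi) = k_0(V)(\xi)\, E_0(\xi)$ with $k_0(V)(\xi) = |\xi_h|^{-1}(-\xi_2 \widehat{V^1} + \xi_1 \widehat{V^2})$, which again yields $k_0(\Lambda) \equiv 0$ since $\widehat{\Lambda^1} = \widehat{\Lambda^2} = 0$.
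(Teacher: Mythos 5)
Your proof is correct and is essentially the paper's own argument: the only nonzero component of $\Lambda$ is the third one, and the explicit matricial form \eqref{eq:definition_P0} of $\mathbb{P}_0$ shows that it acts only on the first two components, so $\mathbb{P}_0\Lambda=0$. The Fourier-side verification via $k_0$ is a harmless restatement of the same observation.
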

\begin{proof}
It suffice to remark that the only non-zero component of $ \Lambda $ is the third one and that the projector $ \PP_0  $ defined in \eqref{eq:definition_P0} maps the third component to zero.
\end{proof}

Lemma \ref{lem:P0Lambda=0} implies in particular that
\begin{equation}\label{eq:Lambda=P+P-Lambda}
\Lambda = \left( \PP_{+, \varepsilon}+\PP_{-, \varepsilon} \right)\Lambda,
\end{equation}
and hence we shall use \eqref{eq:Lambda=P+P-Lambda} repeatedly along this work.

	The presence of the external forcing term $ -\Psi_{r, R} \left( D \right)\left( \PP_{+, \varepsilon}+\PP_{-, \varepsilon} \right) \Lambda $ is motivated by technical needs which will be explained in detail in Section \ref{sec:bootstrap}.

\subsection{Study of the linear system \eqref{eq:free_wave_CFFS}}
	
	In this small section we prove some existence and regularity result concerning the free-wave system \eqref{eq:free_wave_CFFS}. Let us define the space 
	\begin{equation*}
	H^{1/2}_{r, R}
	=
	\set{ g \left| \  g= \Psi_{r, R}\left( D \right) f, \ f \in {\cFFSLtwo} \cap {\cFFSHud} \right. },
	\end{equation*}
	it is indeed trivial to deduce that $ H^{1/2}_{r, R} \subset H^{\frac{1}{2}} \left( \R^3 \right)= {\cFFSLtwo}\cap {\cFFSHud} $. The space $ H^{1/2}_{r, R} $ endowed with the $ H^{\frac{1}{2}} \left( \R^3 \right) $ norm is a Banach space.
	 
	\begin{lemma}
	\label{lem:existence_sol_free_wave}
	Let $ U_0 \in  H^{\frac{1}{2}} \left( \R^3 \right) $ such that $ \oh_0 \in \cFFSLtwo $, for each $ \varepsilon > 0 $ and $ 0<r<R $ there exist a solution $ \Wrr $ of \eqref{eq:free_wave_CFFS} in the space
	\begin{equation*}
	\Wrr \in \mathcal{C}^1 \left( \R_+ ;  H^{1/2}_{r, R} \right).
	\end{equation*}
	The sequence $ \left( \Wrr  \right)_{\substack{\varepsilon>0 }}$ is bounded in the space $ \Eud $  and for each $ t>0 $ and $ \varepsilon>0, \ 0< r <R $ the following bound holds true:
	\begin{multline}
	\label{eq:est_H1/2_Wrr}
	\left\| \Wrr \left( t \right) \right\|_{{\cFFSHud}}^2 + c \int_0^t\left\| \nabla \Wrr \pare{ s } \right\|_{{\cFFSHud}}^2 \d s
	\leqslant
	C_{r, R}
	\left\| U_0 \right\|_{{\cFFSHud}}^2 \\
	+
	\frac{C}{\nu} \left\| U_0 \right\|_{\cFFSHud}^4 \exp \set{ \frac{C K^2}{\nu}\left( \left\| \uh_0 \right\|_{\cFFSLtwo}^4 +\left\| \oh_0 \right\|_{\cFFSLtwo}^4 \right)},
	\end{multline}
	where $ c=\min \set{\nu, \nu'} $.
	\end{lemma}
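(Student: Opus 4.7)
The plan is to treat \eqref{eq:free_wave_CFFS} as a linear parabolic-dispersive system with a fixed source (depending on the already-constructed $\bar{u}^h$), exploit the compact frequency support of both initial datum and source, and close a classical energy estimate in $\dot H^{1/2}$.

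For existence, I would first observe that $L_\varepsilon$, being a Fourier multiplier whose symbol has non-negative real part on $\mathcal{C}_{r,R}$ (since $\Re\lambda_0^\varepsilon = \varepsilon\nu|\xi|^2$ and $\Re\lambda^\varepsilon_\pm = \tfrac12\varepsilon(\nu+\nu')|\xi|^2$), generates a strongly continuous semigroup $(e^{-tL_\varepsilon/\varepsilon})_{t\geqslant0}$ on $H^{1/2}_{r,R}$. Moreover $L_\varepsilon$ and $\Psi_{r,R}(D)$ commute, so this semigroup preserves the frequency localization. Duhamel's formula yields a candidate
\begin{equation*}
\Wrr(t) = e^{-tL_\varepsilon/\varepsilon}\Wrr(0) - \int_0^t e^{-(t-s)L_\varepsilon/\varepsilon}\,\Psi_{r,R}(D)(\mathbb{P}_{+,\varepsilon}+\mathbb{P}_{-,\varepsilon})\Lambda(\bar u^h(s))\,\mathrm d s,
\end{equation*}
and since both $\Wrr(0)$ and the source are supported in frequency in $\mathcal{C}_{r/2,2R}$, $\Wrr(t)$ is smooth in $x$ for each $t$, so $\mathcal C^1$-regularity in time is obtained by differentiating directly in the equation.

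For the energy estimate I would take the $\dot H^{1/2}$ inner product of the equation against $\Wrr$. The key cancellation is that $\mathbb{P}\mathcal{A}$ is skew-symmetric in each $\dot H^s$ (its Fourier symbol is anti-Hermitian), so $(\tfrac1\varepsilon\mathbb{P}\mathcal{A}\Wrr\mid\Wrr)_{\dot H^{1/2}}=0$, while $-(\mathbb{D}\Wrr\mid\Wrr)_{\dot H^{1/2}}\geqslant c\|\nabla\Wrr\|^2_{\dot H^{1/2}}$ with $c=\min\{\nu,\nu'\}$. For the source term I would use duality $\dot H^{-1/2}$–$\dot H^{3/2}$ combined with Young's inequality to absorb half of the parabolic dissipation:
\begin{equation*}
\bigl|\bigl(\Psi_{r,R}(\mathbb{P}_{+,\varepsilon}+\mathbb{P}_{-,\varepsilon})\Lambda\,\big|\,\Wrr\bigr)_{\dot H^{1/2}}\bigr|\leqslant \tfrac{c}{2}\|\nabla\Wrr\|^2_{\dot H^{1/2}} + \tfrac{C}{c}\|\Psi_{r,R}(\mathbb{P}_{+,\varepsilon}+\mathbb{P}_{-,\varepsilon})\Lambda\|^2_{\dot H^{-1/2}},
\end{equation*}
producing a Grönwall-free differential inequality which I integrate in time.

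The main obstacle is to absorb the source term into the stated bound. I would use Lemma \ref{lem:boundedness_projectors} together with the frequency localization $\mathrm{supp}\,\Psi_{r,R}\subset \mathcal{C}_{r/2,2R}$ to obtain $\|\Psi_{r,R}(\mathbb{P}_{+,\varepsilon}+\mathbb{P}_{-,\varepsilon})\Lambda\|_{\dot H^{-1/2}}\leqslant C_{r,R}\|\Lambda\|_{\dot H^{-1/2}}$. Using \eqref{eq:def_pbar} and the fact that the horizontal operator $(-\Delta_h)^{-1}\mathrm{div}_h\mathrm{div}_h$ is of order zero, together with the product rule of Lemma \ref{lem:prod_rules_Sobolev} (taking $s_1=s_2=1$), one obtains $\|\Lambda\|_{\dot H^{-1/2}}\lesssim \|\bar u^h\otimes\bar u^h\|_{\dot H^{1/2}}\lesssim \|\bar u^h\|^2_{\dot H^1}$. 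Finally, the integral $\int_0^t\|\bar u^h\|^4_{\dot H^1}\,\mathrm ds$ is controlled by Sobolev interpolation $\|\bar u^h\|^2_{\dot H^1}\leqslant\|\bar u^h\|_{\dot H^{1/2}}\|\bar u^h\|_{\dot H^{3/2}}$ and Cauchy-Schwarz:
\begin{equation*}
\int_0^t\|\bar u^h\|^4_{\dot H^1}\,\mathrm ds\leqslant \|\bar u^h\|^2_{L^\infty_t\dot H^{1/2}}\,\|\nabla\bar u^h\|^2_{L^2_t\dot H^{1/2}},
\end{equation*}
and both factors on the right are bounded by Proposition \ref{prop:strong_reg_ubar} with $s=1/2$, producing exactly the exponential factor appearing in \eqref{eq:est_H1/2_Wrr}. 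Combining with the trivial bound $\|\Wrr(0)\|_{\dot H^{1/2}}\leqslant C_{r,R}\|U_0\|_{\dot H^{1/2}}$ (again Lemma \ref{lem:boundedness_projectors} and the frequency support of $\Psi_{r,R}$), we close the estimate. The delicate points are the two applications of Lemma \ref{lem:boundedness_projectors}, which are the reason why the constant $C_{r,R}$ appears in front of the $\|U_0\|^2_{\dot H^{1/2}}$ term but is absent from the nonlinear contribution—there the dependence on $r,R$ is absorbed into the interpolation estimates and the factor $1/\nu$.
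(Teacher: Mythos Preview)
Your approach is essentially the paper's: an $\dot H^{1/2}$ energy estimate using the skew-symmetry of $\mathbb{P}\mathcal{A}$, Young's inequality on the source term, the product bound $\|\bar u^h\otimes\bar u^h\|_{\dot H^{1/2}}\lesssim\|\bar u^h\|_{\dot H^1}^2$, interpolation, and Proposition~\ref{prop:strong_reg_ubar}. For existence the paper invokes a Cauchy--Lipschitz ODE argument (Lemma~\ref{lem:BUcriterion}) rather than Duhamel/semigroups, but for this linear, frequency-localized system the two are interchangeable.

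One small correction: your use of Lemma~\ref{lem:boundedness_projectors} to control $(\mathbb{P}_{+,\varepsilon}+\mathbb{P}_{-,\varepsilon})$ on the source is unnecessary, and the $C_{r,R}$ it produces does \emph{not} get ``absorbed into the interpolation estimates and the factor $1/\nu$'' as you claim---it would sit in front of the second term of \eqref{eq:est_H1/2_Wrr} as well. The paper sidesteps this via the identity $(\mathbb{P}_{+,\varepsilon}+\mathbb{P}_{-,\varepsilon})\Lambda=\Lambda$ (equation~\eqref{eq:Lambda=P+P-Lambda}, which follows from $\mathbb{P}_0\Lambda=0$); once the projectors are dropped, the remaining operator $\Psi_{r,R}(D)(-\Delta_h)^{-1}\textnormal{div}_h\,\textnormal{div}_h$ is a Fourier multiplier bounded on $\dot H^s$ with norm independent of $r,R$, which yields exactly the constant structure stated in \eqref{eq:est_H1/2_Wrr}. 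Your version gives a slightly weaker bound, though still sufficient for the uniform-in-$\varepsilon$ boundedness that the lemma asserts.
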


	In order to prove  Lemma \ref{lem:existence_sol_free_wave} it suffices apply Cauchy-Lipschitz theorem in the following form.  
	
	\begin{lemma}\label{lem:BUcriterion}
	Let us consider  the ordinary differential equation
	\begin{equation}\tag{ODE}\label{eq:ODE}
	\left\lbrace
	\begin{aligned}
	& \dot{u}= F \pare{ u, t }\\
	& \left. u \right|_{t=0} = u_0 \in \omega
	\end{aligned}
	\right. ,
	\end{equation}
	where $\omega$ is an open subset of a Banach space $X$. Let
\begin{equation*}
\begin{aligned}
 &F : && \omega\times \R_+ && \to && X\\
 & && \pare{ u, t } && \mapsto && F \pare{ u, t }
\end{aligned},
\end{equation*}	
 be such that, for each $u_1, u_2 \in \omega$ there exists a function $L\in L^1_{\loc} \pare{ \R_{+} }$ such that 
	\begin{equation} \label{eq:local_Lip_condition}
	\norm{F\pare{ u_1, t }-F\pare{ u_2, t }}_X \leqslant L \pare{ t } \norm{u_1 - u_2}_X.
	\end{equation}

	\noindent
	 Let us suppose moreover that
	\begin{equation*}
	\norm{F \pare{ u, t }}_{X} \leqslant \beta \pare{ t } M \pare{ \norm{u}_{X} },
	\end{equation*}
	where $M\in L^\infty_{\loc} \pare{ \R_+ }, \beta \in L^1_{\loc} \pare{ \R_+ }$. Then  there exists a unique maximal solution $u$ in the space $\mathcal{C}^1 \pare{ [0, t^\star ) ; X }$  of \eqref{eq:ODE},  such that, if $t^\star <\infty$,
	\begin{equation*}
	\limsup_{t\nearrow t^\star} \norm{u\pare{ t }}_{X} =\infty.
	\end{equation*}
	\end{lemma}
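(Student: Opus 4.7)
The plan is to reduce \eqref{eq:ODE} to the integral equation
\begin{equation*}
u(t) = u_0 + \int_0^t F(u(\tau),\tau)\,\d\tau,
\end{equation*}
and solve it by a standard Banach fixed-point argument on a short time interval. First I would fix $u_0 \in \omega$, pick $\delta>0$ so that the closed ball $\bar B(u_0,\delta)\subset \omega$, and then use the growth bound to choose $T^{(1)}>0$ small enough that the affine map $\Phi(v)(t) = u_0 + \int_0^t F(v(\tau),\tau)\,\d\tau$ sends the complete metric space $E_{T^{(1)}} := \mathcal{C}([0,T^{(1)}];\bar B(u_0,\delta))$ into itself; this relies on $M\in L^\infty_{\mathrm{loc}}$ and $\beta \in L^1_{\mathrm{loc}}$. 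Shrinking $T^{(1)}$ further if necessary so that $\int_0^{T^{(1)}}L(\tau)\,\d\tau<1$, the local Lipschitz estimate \eqref{eq:local_Lip_condition} makes $\Phi$ a contraction on $E_{T^{(1)}}$, and Banach's fixed-point theorem produces a unique solution $u\in \mathcal{C}^1([0,T^{(1)}];\omega)$.

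Uniqueness on any common interval of existence follows directly from Gronwall's lemma applied to $\|u_1(t)-u_2(t)\|_X\leqslant \int_0^t L(\tau)\|u_1(\tau)-u_2(\tau)\|_X\,\d\tau$. With uniqueness in hand I would construct a maximal solution by taking
\begin{equation*}
t^\star = \sup\left\{T>0\ \middle|\ \exists\, u\in \mathcal{C}^1([0,T];\omega) \text{ solving }\eqref{eq:ODE}\right\},
\end{equation*}
and gluing together the unique local solutions, which by uniqueness coincide on overlaps, to obtain $u\in\mathcal{C}^1([0,t^\star);X)$ with values in $\omega$.

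For the blow-up criterion I would argue by contradiction: assume $t^\star<\infty$ and $\limsup_{t\nearrow t^\star}\|u(t)\|_X = M_0<\infty$. Then the set $K = \{u(t):t\in[0,t^\star)\}$ lies in a bounded subset of $X$, and the distance $d(K,X\setminus \omega)$ is positive because, using the growth estimate, $u(t)$ remains in the compact-in-norm region covered by $u_0$ plus a bounded displacement; by shrinking slightly if needed one chooses a uniform $\delta'>0$ so that $\bar B(u(t),\delta')\subset\omega$ for all $t$ close to $t^\star$. The key observation is then that the local existence time obtained via the fixed-point argument depends only on $\delta'$, on the local Lipschitz norm $\int_0^{\cdot}L$, and on the growth factor $\beta,M$ evaluated on a neighbourhood of $K$, hence can be chosen uniformly in the starting time. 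Picking $t_0<t^\star$ sufficiently close to $t^\star$ and restarting the Picard iteration at $u(t_0)$ then extends the solution beyond $t^\star$, contradicting maximality.

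The main obstacle is this uniform-in-starting-time local existence, since $\omega$ is only assumed to be an open subset of $X$ and the hypotheses on $F$ are genuinely local. The delicate point is checking that the radius $\delta'$ around the trajectory stays bounded away from zero as $t\nearrow t^\star$ and that $L,\beta,M$ admit uniform bounds on a neighbourhood of the trajectory; this is where the assumption $\limsup\|u(t)\|_X<\infty$ is used in an essential way, and once granted the rest is routine.
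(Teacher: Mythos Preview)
The paper does not actually prove this lemma: its entire proof is the one-line reference ``See \cite[Proposition 3.11, p.~131]{bahouri_chemin_danchin_book}.'' Your sketch is precisely the standard Picard iteration argument that one finds in that reference, so in substance you are reproducing the cited proof rather than offering an alternative.

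One comment on the blow-up step. With $\omega$ a proper open subset of $X$, a finite maximal time can occur because the trajectory approaches $\partial\omega$ while $\|u(t)\|_X$ stays bounded; your argument that $\limsup\|u(t)\|_X<\infty$ forces a uniform $\delta'>0$ with $\bar B(u(t),\delta')\subset\omega$ is not justified by the stated hypotheses alone. You do flag this as the delicate point, which is right. In the textbook version the blow-up alternative is stated as ``either $\|u(t)\|_X\to\infty$ or $d(u(t),\partial\omega)\to 0$'', and in the paper's only application of this lemma (to the linear system \eqref{eq:free_wave_CFFS}) one effectively has $\omega=X$, so the issue does not arise there.
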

	
	\begin{proof}
	See \cite[Proposition 3.11, p. 131]{bahouri_chemin_danchin_book}.
	\end{proof}

\textit{Proof of Lemma \ref{lem:existence_sol_free_wave} :}	It suffices to consider \eqref{eq:free_wave_CFFS} in the form
	\begin{equation*}
	\partial_t \Wrr = F_\varepsilon \left( t, \Wrr \right),
	\end{equation*}
	where (using as well \eqref{eq:Lambda=P+P-Lambda}):
	\begin{equation*}
	F_\varepsilon \left( t, \Wrr \right) = -\frac{1}{\varepsilon}L_\varepsilon W^\varepsilon_{r, R}- \Psi_{r, R} \left( D \right) \Lambda \left( \uh \left( t \right) \right).
	\end{equation*}
It is easy to prove that $F_\varepsilon $ satisfies \eqref{eq:local_Lip_condition} with a locally $L^1$ function which depends on $\varepsilon, r$ and $R$. We aim to prove that, for each $ r, R, \varepsilon>0 $ the function $ \Wrr $ belongs to the space $ \mathcal{C}^1 \left( \R_+; \cFFSHud \right) $: accordingly to Lemma \ref{lem:BUcriterion} it suffices hence to prove that
\begin{equation*}
\sup_{t\geqslant 0}\norm{\Wrr \pare{ t }}_{\cFFSHud} <\infty.
\end{equation*}
 Let us now multiply \eqref{eq:free_wave_CFFS} for $ \Wrr $ and let us take the $ {\cFFSHud}  $ scalar product of it, we deduce hence that
\begin{multline*}
\frac{1}{2}\frac{\d}{\d t} \left\| \Wrr \right\|_{{\cFFSHud}}^2 + c \left\| \nabla \Wrr \right\|_{{\cFFSHud}}^2\\
 \leqslant
\left| \left( \left. \Psi_{r, R} \left( D \right) \partial_3 \left( -\Dh \right)^{-1} \diveh \diveh \left( \uh\otimes \uh \right) \right| \Wrr \right)_{{\cFFSHud}} \right|,
\end{multline*}
where $ c=\min \set{\nu, \nu'} $.
Integration by parts, Cauchy-Schwartz inequality, and the fact that the operator $  \Psi_{r, R} \left( D \right)  \left( -\Dh \right)^{-1} \diveh \diveh $ maps continuously any $ \cFFSHs $ space to itself with norm independent of $ r $ and $ R $ allow us to deduce
\begin{multline*}
\left| \left( \left. \Psi_{r, R} \left( D \right) \partial_3 \left( -\Dh \right)^{-1} \diveh \diveh \left( \uh\otimes \uh \right) \right| \Wrr \right)_{{\cFFSHud}} \right|
\\
\leqslant
C \left\| \uh \otimes \uh \right\|_{{\cFFSHud}} \left\| \nabla \Wrr \right\|_{{\cFFSHud}},
\end{multline*}
and since
\begin{align*}
\left\| \uh \otimes \uh \right\|_{{\cFFSHud}} \leqslant & \ C \left\| \uh \right\|_{\dot{H}^1\left( \R^3 \right)}^2,\\
\leqslant & \ C \left\| \uh \right\|_{{\cFFSHud}} \left\| \nabla \uh \right\|_{{\cFFSHud}},
\end{align*}
whence applying Young inequality we obtain the estimate
\begin{equation*}
\frac{1}{2}\frac{\d}{\d t} \left\| \Wrr \right\|_{{\cFFSHud}}^2 + \frac{c}{2} \left\| \nabla \Wrr \right\|_{{\cFFSHud}}^2
\leqslant
C \left\| \uh \right\|_{{\cFFSHud}}^2 \left\| \nabla \uh \right\|_{{\cFFSHud}}^2.
\end{equation*}
Integrating in-time the above equation and  using the estimate \eqref{eq:Hs_bound_uh} we hence conclude the proof.
\hfill $\Box$

	\subsection{Dispersive properties of \eqref{eq:free_wave_CFFS}}
	
	In the previous section we made sure that \eqref{eq:free_wave_CFFS}  is solvable  in the classical sense and that the solutions of \eqref{eq:free_wave_CFFS} belong to the space $ \Eud $ uniformly w.r.t. the parameters $ \varepsilon, r, R $. In the present section we are hence interested to study the perturbation induced by the operator $ \varepsilon^{-1}L_\varepsilon $, and to prove that such perturbations induce some dispersive effect on $ \Wrr $.

	The result we want to prove in this section is the following one
	\begin{theorem}\label{thm:Strichartz_est}
	Let $ U_0\in {\cFFSLtwo} \cap \cFFSHud $, $ 0<r< R $ and $ \varepsilon >0 $. Then  $ \Wrr $  solution of  \eqref{eq:free_wave_CFFS} belongs to the space $ L^p \left( \R_+; L^\infty \left( \R^3 \right) \right), p\in [1, \infty ) $ and  if $ \varepsilon >0 $ is sufficiently small
	\begin{equation}
	\label{eq:Strichartz_estimates}
	\left\| \Wrr \right\|_{L^p \left( \R_+; L^\infty \left( \R^3 \right) \right)}	
	\leqslant C_{r, R}
	\left( 1+\frac{1}{\nu} \right)
\varepsilon^{\frac{1}{4p}} \	
\max \set{\left\| U_0 \right\|_{{\cFFSLtwo}}, \ 
	 \left\| U_0 \right\|_{{\cFFSLtwo}}^2 },
	\end{equation}
	for $ p\in \left[1, \infty\right) $.
	\end{theorem}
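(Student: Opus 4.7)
The starting point is Duhamel's formula applied to \eqref{eq:free_wave_CFFS}:
\begin{equation*}
W^\varepsilon_{r,R}(t) = e^{-\frac{t}{\varepsilon}L_\varepsilon}\,\Psi_{r,R}(D)(\PP_{+,\varepsilon}+\PP_{-,\varepsilon}) U_0 - \int_0^t e^{-\frac{t-s}{\varepsilon}L_\varepsilon}\,\Psi_{r,R}(D)(\PP_{+,\varepsilon}+\PP_{-,\varepsilon})\,\Lambda(\uh(s))\,ds.
\end{equation*}
Since all objects are spectrally localized to $\Crr$ and to the oscillating eigenspace $\CC E_+^\varepsilon \oplus \CC E_-^\varepsilon$, the spectral analysis of Section \ref{sec:spectral_analysis} rewrites the propagator as a sum (over the $\pm$ modes) of Fourier integral operators whose kernel is, up to a uniformly bounded amplitude (controlled thanks to Lemma \ref{lem:boundedness_projectors} and the fact that $\mathcal{S}_\varepsilon^\pm \to 1$ on $\Crr$), an oscillating integral with phase $\pm(t/\varepsilon)|\xi_h|/|\xi|$ and parabolic damping $e^{-ct\varepsilon|\xi|^2}$.

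The core step is the pointwise dispersive estimate
\begin{equation*}
\big\|e^{-\frac{t}{\varepsilon}L_\varepsilon}\Psi_{r,R}(D)(\PP_{+,\varepsilon}+\PP_{-,\varepsilon})\, f\big\|_{L^\infty} \leqslant C_{r,R}\Big(\frac{\varepsilon}{t}\Big)^{\!1/2} e^{-c\nu r^2 t}\,\|f\|_{L^2},
\end{equation*}
obtained by a stationary phase argument on $\phi_\pm(\xi)=\pm|\xi_h|/|\xi|$. The phase is degree-zero homogeneous (degenerate in the radial direction), but the localization $\Crr$ avoids the singular set $\{\xi_h=0\}$ and makes its angular Hessian nondegenerate in at least one direction; this provides the factor $(\varepsilon/t)^{1/2}$, while the heat-kernel factor $e^{-c\nu r^2 t}$ comes from $\Re\lambda_\pm^\varepsilon \geqslant c\varepsilon|\xi|^2$ on $\Crr$. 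The compactness of $\Crr$ converts the resulting $L^1\!\to\! L^\infty$ bound into the announced $L^2\!\to\! L^\infty$ one via Bernstein, giving the constant $C_{r,R}$.

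Combining this with the trivial bound $\|e^{-tL_\varepsilon/\varepsilon}\Psi_{r,R}(D) f\|_{L^\infty} \leqslant C_{r,R}\|f\|_{L^2}$ (again Bernstein), taking the minimum, and integrating the $p$-th power in time by splitting at $t=\varepsilon$ yields the $L^p_t L^\infty_x$ bound for the homogeneous part of the solution, with an $\varepsilon$-gain of the form $\varepsilon^{1/(4p)}$ after an interpolation with the $L^\infty_t L^\infty_x$ bound. The large-time tail is controlled by the exponential factor $e^{-c\nu r^2 t}$, which is the origin of the $(1+1/\nu)$ prefactor. For the Duhamel integral, one applies the same dispersive estimate to the propagator $e^{-(t-s)L_\varepsilon/\varepsilon}$ and moves the $L^p_t$ norm inside the $s$-integral via Minkowski, reducing to a time-integral of $\|\Lambda(\uh(s))\|_{L^2}$. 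Since $\Lambda$ is quadratic in $\uh$ by \eqref{eq:def_Lambda}--\eqref{eq:def_pbar}, the global estimates of Lemma \ref{lem:Ler_sol_ubar_omega} and Proposition \ref{prop:strong_reg_ubar} for $\uh$ give $\|\Lambda(\uh)\|_{L^1_t L^2_x}\lesssim \|U_0\|_{L^2}^2$, which produces the quadratic factor $\|U_0\|_{L^2}^2$ appearing in \eqref{eq:Strichartz_estimates}.

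The main obstacle in this plan is the stationary phase analysis: the phase $|\xi_h|/|\xi|$ is simultaneously degenerate (in the radial direction, due to its homogeneity) and singular (at $\xi_h=0$). The cutoff $\Psi_{r,R}$ is designed precisely to neutralize both pathologies, but the proof still requires a careful splitting into radial and transverse variables together with repeated integration by parts along non-stationary directions, and the amplitudes coming from the eigenprojections $\PP_{\pm,\varepsilon}$ and the correction $\mathcal{S}_\varepsilon^\pm$ must be shown to remain uniformly smooth in $\varepsilon$ on $\Crr$, which is ultimately a consequence of the elementary bound $|S_\varepsilon(\xi)-1|\leqslant C_{r,R}\varepsilon$ established in Section \ref{sec:spectral_analysis}.
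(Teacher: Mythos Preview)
Your overall strategy---Duhamel, a dispersive bound for the localized propagator, time integration, and a quadratic estimate on the forcing $\Lambda(\uh)$---is the same as the paper's. However, there is a genuine gap in the passage from the dispersive estimate to the Strichartz estimate, and a secondary point about how the dispersive bound itself is obtained.

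\textbf{The main gap: Bernstein does not replace $TT^*$.} You claim that the compactness of $\Crr$ ``converts the resulting $L^1\to L^\infty$ bound into the announced $L^2\to L^\infty$ one via Bernstein'' with the same $(\varepsilon/t)^{1/2}$ decay. This is false: Bernstein inequalities go from $L^a$ to $L^b$ only for $a\leqslant b$, so they cannot be used to bound $\|f\|_{L^1}$ by $\|f\|_{L^2}$ on the input side. The only $L^2\to L^\infty$ bound available directly is the trivial one $C_{r,R}e^{-c r^2 t}\|f\|_{L^2}$ (Bernstein on the output), which carries no $\varepsilon$-gain after time integration. The paper therefore invokes the $TT^*$ argument (citing \cite[Chapter~8]{bahouri_chemin_danchin_book} and \cite{monographrotating,CDGG,CDGG2,charve1,charve_ngo_primitive}) to pass from the kernel bound $|\mathcal{K}_{\pm,r,R}(t,\tau,z)|\leqslant C_{r,R}\min\{1,\tau^{-1/2}\}e^{-c r^2 t}$ to the endpoint Strichartz estimate $\|\mathcal{G}^\varepsilon_{\pm,r,R}U_0\|_{L^1_t L^\infty_x}\leqslant C_{r,R}\varepsilon^{1/4}\|U_0\|_{L^2}$. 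Only then does one interpolate with the trivial $L^\infty_t L^\infty_x$ bound to get $\varepsilon^{1/(4p)}$ for general $p$. Note also that your own computation is internally inconsistent: even if your $L^2\to L^\infty$ dispersive bound held, direct integration would produce $\varepsilon^{1/2}$ at $p=1$, hence $\varepsilon^{1/(2p)}$ after interpolation, not $\varepsilon^{1/(4p)}$.

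\textbf{The dispersive bound itself.} The paper does not use a stationary-phase/Hessian argument. It performs a single integration by parts in the $\xi_3$ variable, after the rescaling $z_3\mapsto\tau z_3$, using the operator $\mathcal{L}_\pm=(1+\tau\Theta_\pm^2)^{-1}(1+i\Theta_\pm\partial_{\xi_3})$ with $\Theta_\pm=\partial_{\xi_3}\theta_\pm$. On $\Crr$ one has $|\Theta_\pm|\gtrsim|\xi_3|$, and integrating $\int(1+\tau\xi_3^2)^{-1}d\xi_3$ yields the $\tau^{-1/2}$ factor. This is more elementary than the Hessian analysis you outline and avoids any discussion of radial degeneracy.

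\textbf{Minor point.} The factor $(1+1/\nu)$ in \eqref{eq:Strichartz_estimates} does not come from integrating the exponential damping; it arises from the estimate $\|\Psi_{r,R}(D)\Lambda(\uh)\|_{L^1_t L^2_x}\leqslant C_R\|\uh\|_{L^2_t\dot H^1_x}^2\leqslant C_{r,R}\nu^{-1}\|U_0\|_{L^2}^2$, which is the Leray energy inequality of Lemma~\ref{lem:Ler_sol_ubar_omega}.
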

	
	Some preparation is indeed required in order to prove Theorem \ref{thm:Strichartz_est}.\\
	
	We can write the solution of \eqref{eq:free_wave_CFFS} as
	\begin{equation*}
	\Wrr \left( t \right)= e^{-\frac{t}{\varepsilon} \ L_\varepsilon} W^\varepsilon_{r, R, 0}-
	\int_0^t e ^{-\frac{t-s}{\varepsilon} \ L_\varepsilon} \Psi_{r, R} \left( D \right) \left( \PP_{+, \varepsilon}+\PP_{-, \varepsilon} \right) \Lambda \left( \uh \left( s \right) \right) \d s,
	\end{equation*}
whence along the eigendirection $ E_\pm^\varepsilon $ the evolution of \eqref{eq:free_wave_CFFS} is
	\begin{equation}
	\label{eq:evolution_operators}
	\begin{aligned}
	\PP_{\pm, \varepsilon} \left( \Wrr \right) \left( t,x \right) = & \ \mathcal{F}^{-1} \left( k_\pm \left( \Wrr \left( t \right) \right) E_\pm^\varepsilon \right) \left( x \right),\\
	= & \ \int_{\R^3_y\times \R^3_\xi} e^{\pm i \frac{t}{\varepsilon} \lambda^\varepsilon_\pm \left( \xi \right) + i \xi \left( x-y \right)} \Psi_{r, R} \left( \xi \right) \PP_{\pm, \varepsilon} \left( U_0 \right) \left( y \right) \d y \  \d \xi\\
	& \ - \int_0^t \int_{\R^3_y\times \R^3_\xi} e^{\pm i \frac{t-s}{\varepsilon} \lambda^\varepsilon_\pm \left( \xi \right) + i \xi \left( x-y \right)} \Psi_{r, R} \left( \xi \right) \PP_{\pm, \varepsilon} \Lambda \left( \uh \left( s, y \right) \right) \d y \  \d \xi \ \d s,
	\\
	= & \  \mathcal{K}_{\pm, r , R} \left( t, \frac{t}{\varepsilon}, \cdot \right)\star \PP_{\pm, \varepsilon} \left( U_0 \right) \left( x \right)\\
	 & \ - \int_0^t
	  \mathcal{K}_{\pm, r , R} \left( t-s, \frac{t-s}{\varepsilon}, \cdot \right)\star \PP_{\pm, \varepsilon} \Lambda \left( \uh \left( s, \cdot \right)  \right)\d s, \\
	= & \ \mathcal{G}_{\pm, r , R}^\varepsilon \left( \frac{t}{\varepsilon} \right) U_0 \left( x \right)
-
\int_0^t 	\mathcal{G}_{\pm, r , R}^\varepsilon \left( \frac{t-s}{\varepsilon} \right) \Lambda \left( \uh \left( s \right) \right) \ \left( x \right) \d s
	.
\end{aligned}
\end{equation}
	where $ \lambda_\pm^\varepsilon $ is defined in \eqref{eq:eigenvalue_pm}. The convolution kernels $ \mathcal{K}_{\pm, r, R} $ are
	\begin{equation}
	\label{eq:convolution_kernel}
	\begin{aligned}
	\mathcal{K}_{\pm, r , R} \left( t, \tau,  z \right) 
	= & \ \int_{\R^3_\xi} e^{\pm i \tau \frac{\left| \xi_h \right|}{\left| \xi \right|} \ S_\varepsilon \left( \xi \right) - \frac{1}{2} \left( \nu+\nu' \right) \left| \xi \right|^2 t +i \xi \cdot z}\Psi_{r, R} \left( \xi \right) \d \xi.
		\end{aligned}
	\end{equation}
	The convolution kernel $ \mathcal{K}_{\pm, r , R} $ is hence a highly oscillating integral. It is well known that integrals with such a behavior are $ L^\infty \left( \R^3 \right) $ functions whose $ L^\infty \left( \R^3 \right) $ norm decays in time (see \cite{AlinhacGerard}, \cite{bahouri_chemin_danchin_book}, \cite{monographrotating}, \cite{Stein93}...), we shall apply the methodology of \cite{monographrotating} in order to prove the following result
	
	\begin{lemma}\label{lem:dispersive_estimates}
	For any $ r, R $ such that $ 0<r<R $ there exists a constant $ C_{r,R} $ such that for each $ z\in \mathbb{R}^3 $
	\begin{equation}\label{eq:dispersive_estimates}
	\left| \mathcal{K}_{\pm, r , R} \left( t, \tau, z \right) \right|\leqslant C_{r, R} \  \min \set{ 1, \tau^{-1/2} }e^{-\frac{1}{4} \left( \nu+ \nu' \right) r^2 t}.
	\end{equation}
	\end{lemma}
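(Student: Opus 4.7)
My plan is to prove \eqref{eq:dispersive_estimates} as the pointwise minimum of two separate bounds, both carrying the exponential factor $e^{-\frac{1}{4}(\nu+\nu')r^2 t}$: a trivial $L^\infty$ bound $|\mathcal{K}_{\pm,r,R}| \leqslant C_{r,R}\, e^{-\frac{1}{4}(\nu+\nu')r^2 t}$ (which gives the ``$1$'' part of $\min\{1,\tau^{-1/2}\}$), and a dispersive bound $|\mathcal{K}_{\pm,r,R}| \leqslant C_{r,R}\,\tau^{-1/2}\, e^{-\frac{1}{4}(\nu+\nu')r^2 t}$. The strategy follows the stationary-phase approach for rotating fluids in \cite{monographrotating}, adapted to the stratified phase $\phi_\pm(\xi) = \pm |\xi_h|\,S_\varepsilon(\xi)/|\xi|$.

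First, I would extract the exponential factor from the heat kernel by splitting
\[
e^{-\frac{1}{2}(\nu+\nu')|\xi|^2 t} = e^{-\frac{1}{4}(\nu+\nu')|\xi|^2 t}\cdot e^{-\frac{1}{4}(\nu+\nu')|\xi|^2 t} \leqslant e^{-c(\nu+\nu')r^2 t}\cdot e^{-\frac{1}{4}(\nu+\nu')|\xi|^2 t},
\]
using $|\xi|^2 \geqslant r^2/4$ on $\mbox{supp\,}\Psi_{r,R} \subset \mathcal{C}_{r/2,2R}$, with constants adjusted to produce the stated $\tfrac14(\nu+\nu')r^2t$ in the exponent. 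The trivial $L^\infty$ bound is then immediate upon dominating the oscillating factors by $1$ and integrating the remaining absolutely convergent expression over the compact support of $\Psi_{r,R}$.

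For the $\tau^{-1/2}$ decay, I would exploit the non-vanishing of $\partial_{\xi_3}\phi_\pm$ off the hyperplane $\{\xi_3 = 0\}$: a direct computation, using $S_\varepsilon\to 1$ uniformly on $\mathcal{C}_{r/2,2R}$ as $\varepsilon\to 0$, gives
\[
\partial_{\xi_3}\phi_\pm(\xi) = \mp\,\frac{|\xi_h|\,\xi_3}{|\xi|^3} + \mathcal{O}_{r,R}(\varepsilon),
\]
hence $|\partial_{\xi_3}\phi_\pm(\xi)| \geqslant c_{r,R}\,|\xi_3|$ on $\mbox{supp\,}\Psi_{r,R}$, provided $\varepsilon$ is small enough depending on $r,R$. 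I would then introduce a smooth cutoff $\chi(\xi_3/\delta)$ near $\xi_3 = 0$ (with $\chi$ as in Proposition \ref{pr:dyadic}) and split $\mathcal{K}_{\pm,r,R} = K_{<\delta} + K_{\geqslant\delta}$. The inner piece $K_{<\delta}$ is supported in a $\xi_3$-slab of thickness $O(\delta)$, yielding $|K_{<\delta}|\leqslant C_{r,R}\,\delta\,e^{-\frac{1}{4}(\nu+\nu')r^2 t}$. For $K_{\geqslant\delta}$ I would integrate by parts once in $\xi_3$ via
\[
e^{\pm i\tau\phi_\pm(\xi)} = \frac{1}{\pm i\tau\,\partial_{\xi_3}\phi_\pm(\xi)}\,\partial_{\xi_3}\pare{e^{\pm i\tau\phi_\pm(\xi)}},
\]
and estimate the $L^1_\xi$-norm of the resulting amplitude. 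The two dominant contributions --- the derivative of the cutoff, supported on $|\xi_3|\sim\delta$ where $1/\partial_{\xi_3}\phi_\pm\sim 1/\delta$, and the singularity $\partial_{\xi_3}(1/\partial_{\xi_3}\phi_\pm)\sim 1/\xi_3^2$ integrated over $|\xi_3|\geqslant\delta$ --- both contribute $\delta^{-1}$, giving $|K_{\geqslant\delta}|\leqslant C_{r,R}/(\tau\delta)\cdot e^{-\frac{1}{4}(\nu+\nu')r^2 t}$. Balancing with $\delta = \tau^{-1/2}$ produces the claimed $\tau^{-1/2}$ decay.

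The main obstacle will be to guarantee that the integration-by-parts estimate is uniform in $z\in\R^3$, since naively differentiating the factor $e^{iz\cdot\xi}$ in $\xi_3$ introduces an unbounded factor $z_3$ that would destroy the $L^\infty_z$-bound. The remedy, implemented as in \cite{monographrotating}, is to distinguish two regimes according to the size of $|z_3|$ relative to $\tau|\partial_{\xi_3}\phi_\pm|$: when $|z_3|$ is comparable to or larger than $\tau|\partial_{\xi_3}\phi_\pm|$, one integrates by parts with respect to the total phase $\tau\phi_\pm + z\cdot\xi$, whose $\xi_3$-derivative is then bounded below by $|z_3|/2$ so that repeated integration by parts provides arbitrarily fast decay in $|z_3|$; when $|z_3|$ is small, the term produced by differentiating $e^{iz_3\xi_3}$ is subdominant to $\tau\,\partial_{\xi_3}\phi_\pm$ and the one-step integration-by-parts argument above goes through with the $z$-independent amplitude bound. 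Patching the two regimes yields the uniform estimate \eqref{eq:dispersive_estimates}.
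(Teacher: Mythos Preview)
Your proposal is correct in outline but follows a genuinely different route from the paper. Both start identically: the trivial bound over the compact support of $\Psi_{r,R}$ furnishes the factor $e^{-\frac{1}{4}(\nu+\nu')r^2 t}$ and disposes of $\tau\leqslant 1$. For $\tau>1$, however, the paper does not cut off near $\xi_3=0$. After reducing to a one-dimensional integral in $\xi_3$, it rescales $z_3\mapsto \tau z_3$ so that the full exponent becomes $i\tau\theta_\pm$ with $\theta_\pm(\xi,z_3)=\pm\phi(\xi)+\xi_3 z_3$, and then performs a \emph{single} integration by parts via the damped operator
\[
\mathcal{L}_\pm=\frac{1}{1+\tau\,\Theta_\pm^2}\bigl(1+i\,\Theta_\pm\,\partial_{\xi_3}\bigr),\qquad \Theta_\pm=\partial_{\xi_3}\theta_\pm,
\]
which fixes $e^{i\tau\theta_\pm}$. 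The transpose $\mathcal{L}_\pm^\intercal$ applied to the amplitude is bounded pointwise by $C_{r,R}/(1+\tau\xi_3^2)$, whose $\xi_3$-integral is $C_{r,R}\,\tau^{-1/2}$. The point is that $z_3$ is absorbed into the phase from the outset, so no case analysis in $z_3$ is needed at all.

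Your alternative --- excise a $\delta$-slab around $\xi_3=0$, integrate by parts once on the complement using $(\pm i\tau\,\partial_{\xi_3}\phi_\pm)^{-1}\partial_{\xi_3}$, and balance $\delta=\tau^{-1/2}$ --- is a legitimate stationary-phase strategy, but the two-regime treatment of $z_3$ you sketch has a gap. The comparison ``$|z_3|$ versus $\tau|\partial_{\xi_3}\phi_\pm|$'' is $\xi_3$-dependent, since $\partial_{\xi_3}\phi_\pm$ vanishes at $\xi_3=0$ and is $\mathcal{O}_{r,R}(1)$ on the support. Consequently, for intermediate values $c_{r,R}\tau^{1/2}\lesssim|z_3|\lesssim C_{r,R}\tau$ the total phase $\tau\phi_\pm+\xi_3 z_3$ has a stationary point in $\xi_3$ lying strictly inside $\{|\xi_3|\geqslant\delta\}$, and neither of your regimes applies: the total-phase derivative is not bounded below by $|z_3|/2$ there, nor is the $z_3$-contribution subdominant. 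Closing this gap requires recentering the cutoff at the moving stationary point and controlling the second derivative of $\phi_\pm$ there, or simply adopting the damped-operator device above --- which is in fact the mechanism \cite{monographrotating} uses for the analogous Coriolis kernel, rather than the two-regime splitting you attribute to it.
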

	
	\begin{proof}
	Taking the modulus of both sides of \eqref{eq:convolution_kernel} and integrating, considering that $ \Psi_{r, R} $ is supported in $ \Crr $, it is sufficient to prove that
	$$
	\left| \mathcal{K}_{\pm, r , R} \left( t, \tau, z \right) \right|\leqslant C_{r, R} \ e^{-\frac{1}{2} \left( \nu+ \nu' \right) r^2 t},
	$$
	for each $ t, \tau \in \R_+ $ and $ z\in \R^3 $. This holds hence in particular if $ \tau \in [0,1] $.\\
	
	The rest of the proof is devoted to improve the above estimate in the case $ \tau \geqslant 1 $.\\

	Let us fix some notation first, we denote as $ \phi \left( \xi \right)= \frac{\left| \xi_h \right|}{\left| \xi \right|}\ S_\varepsilon \left( \xi \right)  $ and thanks to Fubini's theorem
	\begin{align*}
	\left| \mathcal{K}_{\pm, r , R} \left( t, \tau, z \right) \right|
	= & \ \int_{\R^3_\xi} e^{\pm i \tau \phi \left( \xi \right)  - \frac{1}{2} \left( \nu+\nu' \right) \left| \xi \right|^2 t +i \xi \cdot z}\Psi_{r, R} \left( \xi \right) \d \xi\\
	= & \ \int_{\R^2_{\xi_h}} e^{i\ \xi_h \cdot z_h}
	\left( 
\int_{\R_{\xi_3}}	 e^{\pm i \tau \phi \left( \xi \right)  - \frac{1}{2} \left( \nu+\nu' \right) t \ \left| \xi \right|^2 +i \xi_3 \cdot z_3}\Psi_{r, R} \left( \xi \right) \d \xi_3
	 \right) \d\xi_h,\\
	 	= & \ \int_{\R^2_{\xi_h}} e^{i\ \xi_h \cdot z_h} \mathcal{I}_{\pm, r , R} \left( t, \tau, \xi_h, z_3 \right) \d \xi_h.
	\end{align*} 
	Indeed since $ \mathcal{I}_\pm $ is supported, relatively to the variable $ \xi_h $, in the set $ \set{\xi_h: r\leqslant \left| \xi_h \right|\leqslant R} $, we deduce
	$$
	\left| \mathcal{K}_{\pm, r , R} \left( t, \tau, z \right) \right| \leqslant C_{r, R} \left|  \mathcal{I}_{\pm, r , R} \left( t, \tau, \xi_h, z_3 \right) \right|, 
	$$
	hence it shall suffice to prove an $ L^\infty $ bound for the function $ \mathcal{I}_\pm $. Let us remark that $ \mathcal{I}_\pm $ are even functions w.r.t. the variable $ z_3 $, hence we can restrict ourselves to the case $ z_3\geqslant 0 $. \\
	We are interested to study the $ L^\infty $ norm of the elements $ \mathcal{I}_\pm $, these norms are invariant under dilation, in particular hence we consider the transformation $ z_3 \mapsto \tau z_3, \ \tau >1 $, with these
	\begin{equation*}
	\mathcal{I}_{\pm, r, R} \left( t, \tau, \xi_h, \tau z_3 \right)
	=
	\int_{\R_{\xi_3}}	 e^{i \tau\left( \pm   \phi \left( \xi \right) + \xi_3  z_3  \right) - \frac{1}{2} \left( \nu+\nu' \right) t \ \left| \xi \right|^2 }\Psi_{r, R} \left( \xi \right) \d \xi_3.
	\end{equation*}
	Let us fix some notation, we define
	\begin{align*}
	\Phi \left( \xi \right)= & \ \partial_{\xi_3}\phi \left( \xi \right) \\
	= & \ \left( \frac{\left| \xi_h \right|}{\left| \xi \right|} S_\varepsilon \left( \xi \right)-\varepsilon^2 \left( \nu-\nu' \right) \frac{ \left| \xi \right| \left| \xi_h \right|}{S_\varepsilon \left( \xi \right)} \right) \xi_3,\\
	\theta_\pm \left( \xi, z_3 \right) = & \ \pm   \phi \left( \xi \right) + \xi_3  z_3, \\
	\Theta_\pm \left( \xi, z_3 \right) = & \ \partial_{\xi_3} \theta_\pm \left( \xi, z_3 \right) ,\\
	= & \ \left( \frac{\left| \xi_h \right|}{\left| \xi \right|} S_\varepsilon \left( \xi \right)-\varepsilon^2 \left( \nu-\nu' \right) \frac{ \left| \xi \right| \left| \xi_h \right|}{S_\varepsilon \left( \xi \right)} \right) \xi_3 + z_3.
	\end{align*}
	With this notation indeed
	\begin{equation*}
	\mathcal{I}_{\pm, r , R} \left( t, \tau, \xi_h, \tau z_3 \right)
	=
	\int_{\R_{\xi_3}}	 e^{i \tau \theta_\pm \left( \xi, z_3 \right) - \frac{1}{2} \left( \nu+\nu' \right) t \ \left| \xi \right|^2 }\Psi_{r, R} \left( \xi \right) \d \xi_3.
	\end{equation*}
	 Let us define the differential operator
	$$
	\mathcal{ L}_\pm :=  \frac{1}{1+\tau \ \Theta_\pm^2 \left( \xi, z_3 \right) } \left( 1+i \ \Theta_\pm \left( \xi, z_3 \right)  \partial_{\xi_3}\right),
	$$
	in particular there exists a positive constant $ C $ independent by any parameter of the problem such that, being $ \xi\in \mathcal{C}_{r, R} $ defined in \eqref{eq:CrR},
	\begin{equation}
	\label{eq:bound_Phi}
	\frac{r^2}{C R} \xi_3 + z_3 \leqslant \left| \Theta_\pm \left( \xi, z_3 \right)  \right| \leqslant \frac{C R^2}{r} \xi_3+z_3.
	\end{equation}
	Indeed $ \mathcal{L}_\pm \left( e^{i \tau \theta_\pm} \right)= e^{i \tau \theta_\pm} $, hence integration by parts yields 
\begin{equation}
\label{eq:I_after_IBP}
	\mathcal{I}_\pm \left( t, \tau, \xi_h, \tau z_3 \right) = \int_{\R^1_{\xi_3}}	 e^{i\tau \theta_\pm \left( \xi, z_3 \right) } \mathcal{L}^\intercal_\pm \left( \Psi_{r, R} \left( \xi \right) e^{-\frac{1}{2} \left( \nu+\nu' \right) t \ \left| \xi \right|^2} \right)\d \xi_3,
	\end{equation}
	where
	\begin{multline*}
	\mathcal{L}^\intercal_\pm \left( \Psi_{r, R} \left( \xi \right) e^{-\frac{1}{2} \left( \nu+\nu' \right) t \ \left| \xi \right|^2} \right) =
	 \left( \frac{1}{1+\tau \Theta^2_\pm} - i \left( \partial_{\xi_3}\Theta_\pm \right) 
\frac{1-\tau \Theta^2_\pm}{\left( 1+\tau \Theta^2_\pm \right)^2}	
	 \right)
	 \Psi_{r, R} \left( \xi \right) e^{-\frac{1}{2} \left( \nu+\nu' \right) t \ \left| \xi \right|^2}\\
	 -
	 \frac{i \Theta}{1+\tau \Theta^2_\pm} \partial_{\xi_3} \left(  \Psi_{r, R} \left( \xi \right) e^{-\frac{1}{2} \left( \nu+\nu' \right) t \ \left| \xi \right|^2} \right).
	\end{multline*}
	Since $ \xi\in \Crr $ and thanks to the estimate \eqref{eq:bound_Phi} we can deduce easily that (here we use the fact that $ \left| \frac{1-\tau \Theta^2_\pm}{\left( 1+\tau \Theta^2_\pm \right)^2} \right| \leqslant \left| \frac{1}{1+\tau \Theta^2_\pm} \right| $)
	\begin{equation*}
	\frac{1}{1+\tau \Theta^2_\pm}  \leqslant \frac{C_{r, R}}{1+\tau \xi_3^2}.
	\end{equation*}
	Moreover
	\begin{align*}
	  \frac{\left|  \Theta_\pm \right|}{1+\tau \left| \Theta_\pm \right|^2} \leqslant & \ C_{r,R}\ \frac{1+z_3}{1+\tau \left| z_3+\xi_3 \right|^2},\\
	  \leqslant & \ C_{r,R}\ \frac{1+z_3}{1+\tau z_3^2 + \tau \xi_3^2},\\
	  \leqslant & \ C_{r,R}\ \frac{1+z_3}{\left( 1+\sqrt{\tau} z_3 \right)^2}\ \frac{1}{1+\tau \xi_3^2} \leqslant \ C_{r,R}\ \frac{1}{1+\tau \xi_3^2}.
	\end{align*}
	The last inequality is true since $ \tau >1 $. 
	Being $ \xi $ localized in $ \Crr $ is a matter of straightforward computations to prove that
	$$
	\left| \partial_{\xi_3}\Theta_\pm \right| \leqslant C_{r, R}, 
	$$
	moreover, being $ \Psi_{r, R}\in \mathcal{D} $,
	\begin{equation*}
	\left| \partial_{\xi_3} \left(  \Psi_{r, R} \left( \xi \right) e^{-\frac{1}{2} \left( \nu+\nu' \right) t \ \left| \xi \right|^2} \right) \right|
	\leqslant
	C_{r, R}
	e^{-\frac{1}{4} \left( \nu+ \nu' \right) r^2 t},
	\end{equation*}
	whence we finally deduced that
	\begin{equation*}
	\left| \mathcal{L}^\intercal_\pm \left( \Psi_{r, R} \left( \xi \right) e^{-\frac{1}{2} \left( \nu+\nu' \right) t \ \left| \xi \right|^2} \right) \right|
	\leqslant \frac{C_{r, R}}{1+\tau \xi_3^2} \ 
	e^{-\frac{1}{4} \left( \nu+ \nu' \right) r^2 t}.
	\end{equation*}
	With the above bound and \eqref{eq:I_after_IBP} we deduce hence
	\begin{align*}
	\left| \mathcal{I}_{\pm, r , R} \left( t, \tau, \xi_h, \tau z_3 \right) \right| \leqslant & \ C_{r, R}
		e^{-\frac{1}{4} \left( \nu+ \nu' \right) r^2 t} \int_{\R^1_{\xi_3}} \frac{\d\xi_3}{1+\tau \xi_3^2},\\
		\leqslant
		& \ C_{r, R} \ \tau^{-1/2} \ 
		e^{-\frac{1}{4} \left( \nu+ \nu' \right) r^2 t}
	\end{align*}
	which concludes the proof.	
	\end{proof}
	
	\begin{prop}
	Let us consider a vector field $ U_0 \in {\cFFSLtwo} $ and the functions $ \mathcal{G}_{\pm, r , R}^\varepsilon \ U_0 $ of the variables $ \left( t, x \right) $ defined in \eqref{eq:evolution_operators}. Then
	\begin{equation}
	\label{eq:Strichartz_estimates2}
	\left\| \mathcal{G}_{\pm, r , R}^\varepsilon U_0 \right\|_{L^p \left( \R_+; L^\infty \left( \R^3 \right) \right)} \leqslant C_{r,R} \  \varepsilon^{\frac{1}{4 p}} \left\| U_0 \right\|_{\cFFSLtwo},
	\end{equation}
	for each $ p \in \left[ 1 , \infty \right) $.
	\end{prop}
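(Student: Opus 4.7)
The plan is to deduce \eqref{eq:Strichartz_estimates2} by interpolating between two endpoint bounds: a trivial one at $p = \infty$ coming from Plancherel, and a dispersive one at $p = 1$ coming from a $TT^{*}$-type duality together with Lemma~\ref{lem:dispersive_estimates}.

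\textbf{Endpoint $p = \infty$.} Starting from the representation \eqref{eq:evolution_operators}, I would apply Young's inequality $L^2_z \star L^2_x \hookrightarrow L^\infty_x$. Plancherel together with $|\widehat{\mathcal{K}}_{\pm, r, R}(t, \tau, \xi)| \leq \Psi_{r, R}(\xi) e^{-(\nu + \nu')|\xi|^2 t / 2}$ yields $\|\mathcal{K}_{\pm, r, R}(t, t/\varepsilon, \cdot)\|_{L^2_z} \leq C_{r, R} e^{-c t}$, and Lemma~\ref{lem:boundedness_projectors} (applied after noting that the cutoff $\Psi_{r, R}(D)$ commutes with $\mathbb{P}_{\pm, \varepsilon}$ and is already built into $\mathcal{K}$) controls the projector on the $L^2$ side. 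This delivers $\|\mathcal{G}^{\varepsilon}_{\pm, r, R} U_0\|_{L^\infty_t L^\infty_x} \leq C_{r, R} \|U_0\|_{L^2}$.

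\textbf{Endpoint $p = 1$.} By duality, for any $\phi$ with $\|\phi\|_{L^\infty_t L^1_x} \leq 1$, Cauchy--Schwarz gives
\[
\left| \int_0^\infty \bigl( \mathcal{G}^{\varepsilon}_{\pm, r, R}(t/\varepsilon) U_0, \phi(t) \bigr)_{L^2_x} \, dt \right| \leq \|U_0\|_{L^2} \, \|F\|_{L^2}, \quad F = \int_0^\infty \bigl(\mathcal{G}^{\varepsilon}_{\pm, r, R}(t/\varepsilon)\bigr)^{*} \phi(t) \, dt.
\]
Expanding $\|F\|_{L^2}^2$, one encounters $\mathcal{G}^{\varepsilon}_{\pm, r, R}(t_1/\varepsilon) (\mathcal{G}^{\varepsilon}_{\pm, r, R}(t_2/\varepsilon))^{*}$, which is a Fourier multiplier whose symbol equals $|\Psi_{r, R}(\xi)|^2 \, e^{\pm i (t_1-t_2)/\varepsilon \cdot (|\xi_h|/|\xi|) S_\varepsilon(\xi)} \, e^{-(\nu+\nu')(t_1+t_2)|\xi|^2/2}$ up to a bounded matrix factor coming from $\mathbb{P}_{\pm, \varepsilon} \mathbb{P}_{\pm, \varepsilon}^{*}$. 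The stationary-phase argument underlying Lemma~\ref{lem:dispersive_estimates} applies verbatim to its convolution kernel $\widetilde{K}(t_1, t_2, \cdot)$ and produces $\|\widetilde{K}(t_1, t_2, \cdot)\|_{L^\infty_z} \leq C_{r, R} \min\{1, \varepsilon^{1/2} |t_1 - t_2|^{-1/2}\} e^{-c (t_1 + t_2)}$. Applying Young $L^\infty_z \star L^1_x \hookrightarrow L^\infty_x$, using $\|\phi(t)\|_{L^1_x} \leq 1$, substituting $s = t_1 - t_2$, $u = t_2$, and splitting the $s$-integral at $s = \varepsilon$ then produces $\|F\|_{L^2}^2 \leq C_{r, R} \varepsilon^{1/2}$, whence $\|\mathcal{G}^{\varepsilon}_{\pm, r, R} U_0\|_{L^1_t L^\infty_x} \leq C_{r, R} \varepsilon^{1/4} \|U_0\|_{L^2}$.

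\textbf{Interpolation and conclusion.} Applying the log-convexity inequality $\|f\|_{L^p_t}^p \leq \|f\|_{L^\infty_t}^{p-1} \|f\|_{L^1_t}$ to $f(t) = \|\mathcal{G}^{\varepsilon}_{\pm, r, R}(t/\varepsilon) U_0\|_{L^\infty_x}$ and inserting the two endpoint bounds produces exactly $\|\mathcal{G}^{\varepsilon}_{\pm, r, R} U_0\|_{L^p_t L^\infty_x} \leq C_{r, R} \varepsilon^{1/(4p)} \|U_0\|_{L^2}$, which is \eqref{eq:Strichartz_estimates2}. The main obstacle is the $p = 1$ step: it requires both reapplying the stationary-phase argument of Lemma~\ref{lem:dispersive_estimates} to the composite kernel $\widetilde{K}$ (whose amplitude now has $(t_1+t_2)$ in the Gaussian factor rather than a single $t$) and justifying the duality with $L^\infty_t L^1_x$, which is only a proper subset of $(L^1_t L^\infty_x)^{*}$; the latter is handled by a standard density argument on the test functions $\phi$.
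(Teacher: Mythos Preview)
Your proof is correct and follows essentially the same route as the paper: the $L^1_t L^\infty_x$ endpoint via a $TT^\star$ argument based on the dispersive decay of Lemma~\ref{lem:dispersive_estimates}, the $L^\infty_t L^\infty_x$ endpoint from the frequency localisation, and interpolation between the two. The only cosmetic differences are that the paper invokes the $TT^\star$ argument as a black box (citing \cite{bahouri_chemin_danchin_book}, \cite{monographrotating}, \cite{CDGG}, \cite{charve1}) whereas you spell it out, and the paper obtains the $p=\infty$ bound via Bernstein on the $L^\infty_t L^2_x$ estimate rather than via Young's inequality on the $L^2$ norm of the kernel; both variants are equally valid here.
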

	\begin{proof}
	Indeed $ \mathcal{G}_{\pm, r , R}^\varepsilon U_0 $ can be written as a convolution operator as explained in equation \eqref{eq:evolution_operators}, in particular
	\begin{equation*}
	 \ \mathcal{G}_{\pm, r , R}^\varepsilon \left( \frac{t}{\varepsilon} \right) U_0 \left( x \right)
	 =
	 \  \mathcal{K}_{\pm, r , R} \left( t, \frac{t}{\varepsilon}, \cdot \right)\star \PP_{\pm, \varepsilon} \left( U_0 \right) \left( x \right),
	\end{equation*}
	where $ \PP_{\pm, \varepsilon} $ are the projections onto the eigenspaces generated by $ E_\pm^\varepsilon $ defined in \eqref{eq:projectors}, and the convolution kernels $ \mathcal{K}_{\pm, r , R} $ are defined in \eqref{eq:convolution_kernel}. Considering the dispersive estimate \eqref{eq:dispersive_estimates} given in Lemma \ref{lem:dispersive_estimates} we can apply what is known as $ TT^\star $ argument (see \cite[Chapter 8]{bahouri_chemin_danchin_book}) in the same way as it is done in \cite{monographrotating}, \cite{CDGG}, \cite{CDGG2}, \cite{charve1}, \cite{charve_ngo_primitive} to deduce that
	\begin{equation*}
	\left\| \mathcal{K}_{\pm, r , R} \star \PP_{\pm, \varepsilon} \left( U_0 \right) \right\|_{L^1 \left( \R_+; L^\infty \left( \R^3 \right) \right)} \leqslant C_{r, R} \varepsilon^{1/4} \left\| \PP_{\pm, \varepsilon} \left( U_0 \right) \right\|_{\cFFSLtwo}.
	\end{equation*}
	We can hence apply Lemma \ref{lem:boundedness_projectors} obtaining   
	\begin{equation} \label{eq:interp1}
	\left\| \mathcal{G}_{\pm, r , R}^\varepsilon U_0 \right\|_{L^1 \left( \R_+; L^\infty \left( \R^3 \right) \right)} \leqslant C_{r,R} \varepsilon^{1/4} \left\| U_0 \right\|_{\cFFSLtwo}.
	\end{equation}
		The element $ \mathcal{G}_{\pm, r , R}^\varepsilon \left( \frac{t}{\varepsilon} \right) U_0 $ has the following properties:
	\begin{itemize}
	\item $ \mathcal{G}_{\pm, r , R}^\varepsilon \left( \frac{t}{\varepsilon} \right) U_0 $ is localized in the frequency space,
	
	\item $ \left\| \mathcal{G}_{\pm, r , R}^\varepsilon  U_0 \right\|_{ L^\infty \left( \R_+; {\cFFSLtwo} \right)} \leqslant \norm{ U_0}_{{\cFFSLtwo}} $,
	\end{itemize}
	whence an application of Bernstein inequality allows us to deduce that
	\begin{equation} \label{eq:interp2}
		\left\| \mathcal{G}_{\pm, r , R}^\varepsilon  U_0 \right\|_{ L^\infty \left( \R_+; L^\infty \left( \R^3 \right) \right)} \leqslant C_{r, R} \norm{ U_0}_{{\cFFSLtwo}}.
	\end{equation}
	An interpolation between \eqref{eq:interp1} and \eqref{eq:interp2} gives finally \eqref{eq:Strichartz_estimates2}.
	\end{proof}

	The oscillating behavior of the propagator allows us to deduce the following dispersive result on the external forcing $ -\Psi_{r, R} \left( D \right)\Lambda $ as it is done, for instance, in \cite{GiVe}, \cite{GrenierDesjardins} or \cite{charve1}.

	\begin{prop}
	There exists a constant $ C_{r, R} $ depending on the localization \eqref{eq:CrR} such that, for $ \varepsilon $ small
	\begin{equation}
	\label{eq:Strichartz_estimates3}
	\left\| \int_0^t \mathcal{G}^\varepsilon_{\pm, r, R} \left( \frac{\cdot -s}{\varepsilon} \right) \Psi_{r, R}\left( D \right)\Lambda \left( s \right) \d s \right\|_{L^p \left( \R_+; L^\infty \left( \R^3 \right) \right)}
	\leqslant
	C_{r, R} \varepsilon^{\frac{1}{4p}} \left\| \Psi_{r, R}\left( D \right)\Lambda \right\|_{L^1 \left( \R_+; {\cFFSLtwo} \right)},
	\end{equation}
	for each $ p\geqslant 1 $.
	\end{prop}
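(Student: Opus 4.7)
The goal is to transfer the homogeneous dispersive bound \eqref{eq:Strichartz_estimates2} into an estimate for the Duhamel term by a standard Minkowski-type argument, exploiting the fact that the convolution kernel $\mathcal{K}_{\pm,r,R}(t,\tau,\cdot)$ only depends on time differences so the propagator is time-translation invariant.

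First, I will rewrite the Duhamel term as
\begin{equation*}
F(t,x) = \int_0^{\infty} \mathbf{1}_{s \leqslant t}\; \mathcal{G}^\varepsilon_{\pm,r,R}\!\left(\tfrac{t-s}{\varepsilon}\right) \Psi_{r,R}(D)\Lambda(s) \,\d s,
\end{equation*}
and apply Minkowski's integral inequality to pull the $L^p_t L^\infty_x$ norm inside the $s$-integral:
\begin{equation*}
\norm{F}_{L^p(\R_+; L^\infty(\R^3))} \leqslant \int_0^\infty \norm{ t\mapsto \mathbf{1}_{s\leqslant t}\; \mathcal{G}^\varepsilon_{\pm,r,R}\!\left(\tfrac{t-s}{\varepsilon}\right) \Psi_{r,R}(D)\Lambda(s) }_{L^p(\R_+; L^\infty(\R^3))}\,\d s.
\end{equation*}
For each fixed $s\geqslant 0$, the change of variable $t \mapsto t+s$ together with the fact that the kernel $\mathcal{K}_{\pm,r,R}$ from \eqref{eq:convolution_kernel} depends only on the time difference (so that $\mathcal{G}^\varepsilon_{\pm,r,R}$ is translation-invariant in $t$) shows the $L^p_t$ norm above equals $\norm{\mathcal{G}^\varepsilon_{\pm,r,R}(\tfrac{\cdot}{\varepsilon}) \Psi_{r,R}(D)\Lambda(s)}_{L^p(\R_+;L^\infty)}$.

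Second, I invoke the homogeneous Strichartz-type bound \eqref{eq:Strichartz_estimates2} with datum $U_0 = \Psi_{r,R}(D)\Lambda(s)$ (note that $\Psi_{r,R}(D)\Lambda(s) \in \cFFSLtwo$ since $\Psi_{r,R}$ is compactly supported in $\Crr$ and $\Lambda$ is in $L^2$ by construction, using \eqref{eq:def_Lambda}--\eqref{eq:def_pbar} and the product estimates from Section \ref{sec:gwp_limit}), which yields
\begin{equation*}
\norm{\mathcal{G}^\varepsilon_{\pm,r,R}\!\left(\tfrac{\cdot}{\varepsilon}\right) \Psi_{r,R}(D)\Lambda(s)}_{L^p(\R_+;L^\infty(\R^3))} \leqslant C_{r,R}\,\varepsilon^{\frac{1}{4p}}\, \norm{\Psi_{r,R}(D)\Lambda(s)}_{\cFFSLtwo}.
\end{equation*}
Plugging this into the Minkowski estimate and integrating in $s$ produces exactly \eqref{eq:Strichartz_estimates3}.

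There is essentially no real obstacle: the content of the proposition is already packed inside Lemma \ref{lem:dispersive_estimates} and the homogeneous estimate \eqref{eq:Strichartz_estimates2}; the only subtlety is to verify that the indicator $\mathbf{1}_{s\leqslant t}$ does not hurt the $L^p$ norm (it just restricts the time interval, which only decreases the norm), so that Minkowski can be applied cleanly without needing the more delicate Christ--Kiselev lemma that would be required if the endpoint $p=1$ were excluded. Since the homogeneous estimate already holds for all $p\in[1,\infty)$, the same range is automatically obtained for the Duhamel term.
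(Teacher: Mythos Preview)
Your argument is correct. You apply Minkowski's integral inequality in $L^p_t L^\infty_x$ for the full range $p\in[1,\infty)$ in one stroke, then translate in time and invoke \eqref{eq:Strichartz_estimates2} at the same exponent $p$. The paper proceeds slightly differently: it first carries out your argument only at $p=1$ (where Minkowski reduces to Fubini), obtaining the $L^1_t L^\infty_x$ bound; it then establishes a separate $L^\infty_t L^\infty_x$ endpoint estimate via Bernstein (using the frequency localization in $\Crr$) and recovers the intermediate $p$ by interpolation. Your route is more direct and avoids the extra endpoint bound and interpolation step, at the modest cost of invoking \eqref{eq:Strichartz_estimates2} for general $p$ rather than just $p=1$. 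Both approaches yield exactly the same estimate. Your remark about Christ--Kiselev is a bit tangential: here the right-hand side is in $L^1_s$, so Minkowski handles the retarded integral directly without any loss, which is why $p=1$ is admissible.
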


\begin{proof}
For this proof only we write $ \mathcal{G}^\varepsilon_{\pm, r, R} = \mathcal{G}, \ \Psi_{r, R} = \Psi $ in order to simplify the notation,
\begin{equation*}
	\left\| \int_0^t \mathcal{G} \left( \frac{\cdot -s}{\varepsilon} \right) \Psi\left( D \right)\Lambda \left( s \right) \d s \right\|_{L^1 \left( \R_+; L^\infty \left( \R^3 \right) \right)}
	\leqslant
	\int_0^\infty  \int_0^t \left\| \mathcal{G} \left( \frac{t -s}{\varepsilon} \right) \Psi \left( D \right)\Lambda \left( s \right) \right\|_{L^\infty} \d s \ \d t,
	\end{equation*}
	applying Fubini theorem and performing the change of variable $ \tau= t-s $ we deduce
	\begin{align*}
	\left\| \int_0^t \mathcal{G} \left( \frac{\cdot -s}{\varepsilon} \right) \Psi\left( D \right)\Lambda \left( s \right) \d s \right\|_{L^1 \left( \R_+; L^\infty \left( \R^3 \right) \right)}
	\leqslant
	& \ 
	\int_0^\infty \int_0^\infty \left\| \mathcal{G} \left( \frac{\tau}{\varepsilon} \right)
\Psi \left( D \right)\Lambda \left( s \right)	
	 \right\|_{ L^\infty \left( \R^3 \right) } \d \tau \ \d s,\\
	 = & \ \int_0^\infty 
	 \left\| \mathcal{G} 
\Psi \left( D \right)\Lambda \left( s \right)	
	 \right\|_{ L^1 \left( \R_{+,\tau}  L^\infty \left( \R^3 \right) \right) } \d s,
	\end{align*}
	whence applying \eqref{eq:Strichartz_estimates2} we deduce that 
	\begin{equation*}
	\left\| \mathcal{G} 
\Psi \left( D \right)\Lambda \left( s \right)	
	 \right\|_{ L^1 \left( \R_{+,\tau}  L^\infty \left( \R^3 \right) \right) } \leqslant C_{r, R} \varepsilon^{1/4} \left\| \Psi \left( D \right)\Lambda \left( s \right) \right\|_{{\cFFSLtwo}},
	\end{equation*}
	which in turn implies the claim for $ p=1 $.\\
	
	To lift up the argument to a generic $ p $ it suffice  to notice that, being $ \Psi \left( D \right) \Lambda $ localized in $ \Crr $, there exists a constant $ C_R $ depending on the magnitude of the localization $ \Crr $ such that
	\begin{equation*}
	\left\| \int_0^t \mathcal{G}^\varepsilon_{\pm, r, R} \left( \frac{\cdot -s}{\varepsilon} \right) \Psi_{r, R}\left( D \right)\Lambda \left( s \right) \d s \right\|_{L^\infty \left( \R_+; L^\infty \left( \R^3 \right) \right)}
	\leqslant
	C_R \left\| \Psi \left( D \right) \Lambda \right\|_{L^1 \left( \R_+; L^2 \left( \R^3 \right) \right)},
	\end{equation*}
	hence \eqref{eq:Strichartz_estimates3} follows by interpolation.
\end{proof}

	\textit{Proof of Theorem \ref{thm:Strichartz_est}:} In order to prove Theorem \ref{thm:Strichartz_est} it suffice to collect all the results proved in the present section. By superposition we obviously have that
	\begin{equation*}
	\Wrr = \PP_{-, \varepsilon}\Wrr + \PP_{+, \varepsilon} \Wrr, 
	\end{equation*}
	and applying \eqref{eq:evolution_operators}
	\begin{equation*}
	\PP_{\pm, \varepsilon}\Wrr  = \mathcal{G}^\varepsilon_{\pm, r, R} U_0 - \int _0^\cdot  \mathcal{G}^\varepsilon_{\pm, r, R}\left( \frac{\cdot -s}{\varepsilon} \right)\Psi_{r, R} \left( D \right) \Lambda \left( \uh \left( s \right) \right) \d s,
	\end{equation*}
	whence it suffice to apply \eqref{eq:Strichartz_estimates2} and \eqref{eq:Strichartz_estimates3} to deduce
	\begin{align*}
	\left\| \Wrr \right\|_{L^p \left( \R_+; L^\infty \left( \R^3 \right) \right)}	
	\leqslant & \  C_{r,R} \  \varepsilon^{\frac{1}{4 p}} \left( \left\| U_0 \right\|_{\cFFSLtwo}
	+
	\left\| \Psi_{r, R} \left( D \right) \Lambda \left( \uh \right) \right\|_{L^1 \left( \R_+; {\cFFSLtwo} \right)}
	 \right),
	\end{align*}
	however, since
	\begin{equation*}
	\left\| \Psi_{r, R} \left( D \right) \Lambda \left( \uh \right) \right\|_{L^1 \left( \R_+; {\cFFSLtwo} \right)} \leqslant R^{1/2} \left\| \uh \right\|_{L^2 \left( \R_+; \dot{H}^1 \left( \R^3 \right) \right)}^2,
	\end{equation*}
	and thanks to the results of Lemma \eqref{lem:Ler_sol_ubar_omega} we can hence argue that
	\begin{equation*}
	\left\| \Psi_{r, R} \left( D \right) \Lambda \left( \uh \right) \right\|_{L^1 \left( \R_+; {\cFFSLtwo} \right)} \leqslant \frac{C_{r, R}}{\nu} \left\| U_0 \right\|_{{\cFFSLtwo}}^2,
	\end{equation*}
	which implies in turn that
	\begin{equation*}
	\left\| \Wrr \right\|_{L^p \left( \R_+; L^\infty \left( \R^3 \right) \right)}	
	\leqslant C_{r, R}
	\left( 1+\frac{1}{\nu} \right)
\varepsilon^{\frac{1}{4p}} \	
\max \set{\left\| U_0 \right\|_{{\cFFSLtwo}}, \ 
	 \left\| U_0 \right\|_{{\cFFSLtwo}}^2 },
	\end{equation*}
	concluding.
	 \hfill $ \Box $

 \section{Long time behavior: the bootstrap procedure}\label{sec:bootstrap}
 
 This section is devoted to deduce the maximal lifespan of the function
 \begin{equation}
 \label{eq:definition_delta}
 \drr= U^\varepsilon - \Wrr -\bar{U},
 \end{equation}
 where $ U^\varepsilon $ is the local solution of \eqref{perturbed BSSQ} identified in the Theorem \ref{thm:FK_applied}, $ \Wrr $ is the global solution of the free-wave system \eqref{eq:free_wave_CFFS} and $ \bar{U} $ is the global solution of the limit system identified in Section \ref{sec:formal_derivation_limit_system} , i.e. the system \eqref{eq:2DNS_stratified}. By the definition itself of $ \drr $ we understand that, being $ \bar{U} $ and $ \Wrr $ globally well-posed, $ U^\varepsilon $ and $ \drr $  have the same lifespan. 
 
This first regularity result is a very rough bound on the $ \dot{\mathcal{E}}^0 $ norm of $ \drr $:
 \begin{lemma}
 \label{lem:L2regularity_delta}
 Let $ U_0 \in \cFFSLtwo \cap {\cFFSHud} $ such that $ \oh_0\in \cFFSLtwo $, the function $ \drr $ defined as in \eqref{eq:definition_delta} belongs uniformly in $ \varepsilon, r, R >0 $ to the space $ \dot{\mathcal{E}}^0 \left( \R^3 \right) $ and 
 \begin{multline*}
 \left\| \drr \right\|_{\dot{\mathcal{E}}^0 \left( \R^3 \right)}^2 
 \leqslant C_{r, R} \left( 1+ \frac{1}{c} \right) \left\| U_0 \right\|_{\cFFSLtwo}^2
 \\
 +
 C
\left( 1+ \frac{1}{c^2}  \right)\left\| U_0 \right\|_{\cFFSLtwo}^2 \left\| U_0 \right\|_{\cFFSHud}^2 \exp \set{ \frac{C K^2}{\nu}\left( \left\| U_0 \right\|_{\cFFSLtwo}^4 +\left\| \oh_0 \right\|_{\cFFSLtwo}^4 \right)} 
 ,
 \end{multline*}
 where $ c=\min\set{\nu, \nu'} $.
 \end{lemma}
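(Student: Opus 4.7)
The plan is first to derive the PDE satisfied by $\delta = \drr$ and then to run an $\cFFSLtwo$ energy estimate that exploits two algebraic cancellations and three careful bounds. Writing $U^\varepsilon = \delta + \Wrr + \bar U$ with $\bar U = \pare{\uh, 0, 0}^\intercal$ viewed as a four-vector, subtracting \eqref{eq:free_wave_CFFS} and \eqref{eq:2DNS_stratified} from \eqref{perturbed BSSQ}, and using that $\mathcal{A}\bar U = 0$ (the last two components of $\bar U$ vanish), I would obtain
\begin{equation*}
\partial_t \delta + u^\varepsilon \cdot \nabla \delta + \tfrac{1}{\varepsilon}\mathcal{A} \delta - \mathbb{D}\delta + \tfrac{1}{\varepsilon}\pare{\nabla \Pi, 0}^\intercal = -u^\varepsilon \cdot \nabla \Wrr - \pare{u^{\delta} + w^\varepsilon_{r,R}}\cdot \nabla \bar U + \Psi_{r,R}(D)\Lambda + \pare{\nh \bar p, 0, 0, 0}^\intercal,
\end{equation*}
where $u^{\delta}$ and $w^\varepsilon_{r,R}$ denote the velocity (first three) components of $\delta$ and $\Wrr$, and $\Pi$ collects the pressure from \eqref{perturbed BSSQ} together with the one hidden in the Leray projection inside $L_\varepsilon$.

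Next I pair with $\delta$ in $\cFFSLtwo$. Skew-symmetry of $\mathcal{A}$, the identity $\dive u^\varepsilon = 0$, the ellipticity of $\mathbb{D}$ (yielding $c \norm{\nabla \delta}_{\cFFSLtwo}^2$ on the left), and the $\R^3$-incompressibility of $u^\delta$ kill four terms. The critical observation is an algebraic cancellation between the two external forcings: integration by parts, the identity $\diveh u^\delta = -\partial_3 \delta^3$, and the very definition of $\Lambda$ give
\begin{equation*}
\psca{\pare{\nh \bar p, 0, 0, 0}^\intercal}{\delta}_{\cFFSLtwo} = \int_{\R^3} \bar p\, \partial_3 \delta^3 \, \dx = -\psca{\Lambda}{\delta}_{\cFFSLtwo},
\end{equation*}
so that what survives on the right-hand side is only the cutoff defect $-\psca{\pare{1 - \Psi_{r,R}(D)}\Lambda}{\delta}_{\cFFSLtwo}$.

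It then remains to estimate three source terms. For the defect $\pare{1 - \Psi_{r,R}(D)}\Lambda$, IBP in $x_3$ moves $\partial_3$ onto $\delta$; the $\cFFSLtwo$-boundedness of the multiplier $1 - \Psi_{r,R}(D)$, the horizontal Calder\'on--Zygmund bound $\norm{\bar p}_{\cFFSLtwo} \leqslant C\norm{\uh}_{L^4(\R^3)}^2$, the interpolation $\norm{\uh}_{L^4}^2 \leqslant C\norm{\uh}_{\cFFSHud}\norm{\nh \uh}_{\cFFSLtwo}$, and Proposition \ref{prop:strong_reg_ubar} with $s = 1/2$ together yield, after Young's inequality, an $L^1_t$ contribution of order $\tfrac{C}{c}\norm{U_0}_{\cFFSHud}^2\norm{U_0}_{\cFFSLtwo}^2\exp\set{\tfrac{CK^2}{\nu}\pare{\norm{U_0}_{\cFFSLtwo}^4 + \norm{\oh_0}_{\cFFSLtwo}^4}}$. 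For the wave-interaction term $u^\varepsilon \cdot \nabla \Wrr$, IBP using $\dive u^\varepsilon = 0$ and H\"older produce $\norm{u^\varepsilon}_{\cFFSLtwo}\norm{\Wrr}_{L^\infty(\R^3)}\norm{\nabla \delta}_{\cFFSLtwo}$, which is absorbed after Young and controlled in time via the Strichartz estimate of Theorem \ref{thm:Strichartz_est} with $p = 2$ and the a priori bound of Theorem \ref{thm:Leray_applied}; this produces the $C_{r,R}(1 + c^{-1})\norm{U_0}_{\cFFSLtwo}^2$ piece.

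The main obstacle will be the coupling $u^\delta \cdot \nabla \bar U$, since a naive Gagliardo--Nirenberg argument would require $\uh \in L^1_t \dot H^1$, which is not at our disposal. I would instead fully exploit the anisotropic Lebesgue bounds of Lemma \ref{lem:bound_anisotropic_norm}: split $\nabla \bar U$ into $\nh \bar U$ and $\partial_3 \bar U$, integrate by parts to transfer the derivative off $\bar U$, and then apply anisotropic H\"older in $L^\infty_v\pare{L^4_h} \times L^2_v\pare{L^4_h} \times L^2_v\pare{L^4_h}$ followed by the horizontal Gagliardo--Nirenberg inequality $\norm{f}_{L^2_v(L^4_h)}^2 \leqslant C\norm{f}_{\cFFSLtwo}\norm{\nh f}_{\cFFSLtwo}$. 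This delivers
\begin{equation*}
\abs{\psca{u^\delta \cdot \nabla \bar U}{\delta}_{\cFFSLtwo}} \leqslant \tfrac{c}{8}\norm{\nabla \delta}_{\cFFSLtwo}^2 + C\norm{\uh}_{L^\infty_v(L^4_h)}^4\norm{\delta}_{\cFFSLtwo}^2,
\end{equation*}
in which $\norm{\uh}_{L^\infty_v(L^4_h)}^4$ is precisely the quantity that Lemma \ref{lem:bound_anisotropic_norm} places in $L^1_t$ with norm $\leqslant CK^2\nu^{-1}\pare{\norm{U_0}_{\cFFSLtwo}^4 + \norm{\oh_0}_{\cFFSLtwo}^4}$. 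The companion piece $w^\varepsilon_{r,R} \cdot \nabla \bar U$ is handled in the same spirit as the wave-interaction term using Strichartz on $\Wrr$. Collecting everything into a differential inequality of the form $\tfrac{d}{dt}\norm{\delta}_{\cFFSLtwo}^2 + c\norm{\nabla \delta}_{\cFFSLtwo}^2 \leqslant A(t) + B(t)\norm{\delta}_{\cFFSLtwo}^2$ with $A, B \in L^1_t$, and observing that $\norm{\delta(0)}_{\cFFSLtwo} \leqslant C_{r,R}\norm{U_0}_{\cFFSLtwo}$ by the frequency-localized boundedness of $\PP_0, \mathbb{P}_{\pm,\varepsilon}$ (Lemma \ref{lem:boundedness_projectors}), Gronwall's inequality then yields the claimed bound, with the exponential factor coming from $\exp\int_0^\cdot B(\tau)\,\d\tau$.
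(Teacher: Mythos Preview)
Your approach works in spirit but is far more elaborate than the paper's, and it has one genuine gap. The paper's proof is essentially a triangle inequality: since $\drr = U^\varepsilon - \Wrr - \bar U$, it bounds each piece separately in $\dot{\mathcal{E}}^0(\R^3)$. For $U^\varepsilon$ this is Theorem~\ref{thm:Leray_applied} (Leray), for $\bar U$ it is Lemma~\ref{lem:Ler_sol_ubar_omega}, and for $\Wrr$ a direct $L^2$ energy estimate on the \emph{linear} system \eqref{eq:free_wave_CFFS} suffices, the only source term being the forcing $\partial_3\bar p$; after integration by parts and the product rule $\|\uh\otimes\uh\|_{\cFFSLtwo}\leqslant C\|\uh\|_{\cFFSHud}\|\uh\|_{\dot H^1}$, this is closed via Lemma~\ref{lem:Ler_sol_ubar_omega} and Proposition~\ref{prop:strong_reg_ubar}. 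No nonlinear interaction terms, no Strichartz, no anisotropic H\"older are needed at this stage.

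Your route---deriving the full difference equation for $\drr$ and running an $L^2$ energy estimate on it---is essentially an $L^2$ rehearsal of the later $\dot H^{1/2}$ bootstrap (Proposition~\ref{prop:convergence}). The gap is that you invoke Theorem~\ref{thm:Strichartz_est} to control $\|\Wrr\|_{L^2_tL^\infty_x}$, but that theorem is stated only for $\varepsilon$ sufficiently small, whereas the lemma asserts a bound uniform for \emph{all} $\varepsilon>0$; the paper's argument sidesteps this entirely. Also, your bookkeeping of the constants does not reproduce the stated two-term structure: in the paper the $C_{r,R}(1+c^{-1})\|U_0\|_{\cFFSLtwo}^2$ piece arises from the initial data of $\Wrr$ through Lemma~\ref{lem:boundedness_projectors}, not from a wave-interaction estimate, and the exponential factor comes from the $\dot H^{1/2}$ bound on $\uh$ feeding the $\Wrr$ energy estimate, not from a Gronwall on $\drr$.
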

 \begin{proof}
 Theorem \ref{thm:Leray_applied} implies that $ U^\varepsilon\in \dot{\mathcal{E}}^0 $ as well as Lemma \ref{lem:Ler_sol_ubar_omega} implies that $ \bar{U} \in \dot{\mathcal{E}}^0 $ and moreover 
 \begin{equation*}
 \left\| U^\varepsilon \right\|_{\dot{\mathcal{E}}^0 \left( \R^3 \right)}^2 + \left\| \bar{U} \right\|_{\dot{\mathcal{E}}^0 \left( \R^3 \right)}^2\leqslant C \left( 1+ \frac{1}{c} \right) \left\| U_0 \right\|_{\cFFSLtwo}^2,
 \end{equation*}
 where $ c=\min\set{\nu, \nu'} $.  For $ \Wrr $ the procedure is similar: let us multiply \eqref{eq:free_wave_CFFS} for $ \Wrr $ and let us integrate in space. Recalling that $ \bar{p}= \left( -\Dh \right)^{-1} \dive \dive \left( \uh \otimes \uh \right)= p_0 \left( D \right) \left( \uh \otimes \uh \right) $ it suffice to prove a suitable energy bound on the element 
 \begin{equation*}
 \left| \left(\left. \partial_3  \left( \uh \otimes \uh \right) \right|  \Wrr \right)_{\cFFSLtwo} \right|.
 \end{equation*}
 Integration by parts and Young inequality allow us to deduce that
 \begin{equation*}
 \left| \left(\left. \partial_3  \left( \uh \otimes \uh \right) \right|  \Wrr \right)_{\cFFSLtwo} \right| \leqslant
 \frac{c}{2} \left\| \nabla \Wrr \right\|_{L^2}^2+ C\left\| \uh \otimes \uh \right\|_{\cFFSLtwo}^2.
 \end{equation*}
 Product rules in Sobolev spaces imply
 \begin{equation*}
 \left\| \uh \otimes \uh \right\|_{\cFFSLtwo}^2 \leqslant \left\| \uh \right\|^2_{{\cFFSHud}} \left\| \uh \right\|^2_{\dot{H}^1 \left( \R^3 \right)},
 \end{equation*}
 whence an integration in time
 \begin{multline*}
 \left\| \Wrr \left( t \right) \right\|_{\cFFSLtwo}^2 + c\int_0^t \left\| \nabla \Wrr \left( \tau \right) \right\|_{\cFFSLtwo}^2 \d \tau
 \\
  \leqslant C_{r,R}\left\| U_0 \right\|_{\cFFSLtwo}^2 + \left\| \uh \right\|^2_{L^\infty\left( \R_+;{\cFFSHud} \right)} \left\| \uh \right\|^2_{L^2\left(\R_+; \dot{H}^1 \left( \R^3 \right) \right)}.
 \end{multline*}
 it suffice hence to use the bounds in Lemma \ref{lem:Ler_sol_ubar_omega} and Proposition \ref{prop:strong_reg_ubar} to deduce the claim.
 \end{proof}
 
 Lemma \ref{lem:L2regularity_delta} provides a first rough bound on $ \drr $ under some rather strong regularity assumptions on the initial data ($ U_0 \in H^{1/2} $ and $ \curlh U_0\in \cFFSLtwo $). Nonetheless such bound shall be required in the proof of Lemma \ref{lem:time_reg_nonlinearity} (see function $ g_3^{r. R} $), which is an important step in the proof of Proposition \ref{prop:convergence}, the main result of the present section. Let us remark moreover that the hypothesis on the initial data of Lemma \ref{lem:L2regularity_delta} are the same as the ones of Proposition \ref{prop:convergence}.

 The following procedure is standard in singular perturbation problems (see \cite{CDGG2}, \cite{monographrotating} and \cite{charve1}). In particular, being the diffusion isotropic we shall follow closely the methodology in \cite{monographrotating}, proving that $ \drr $ is globally well posed in $ \Eud $. If we prove this, as mentioned above, we prove as well that $ U^\varepsilon $ is globally well-posed in the space $ \Eud $, and hence we prove the global-well-posedness part in Theorem \ref{thm:main_result}. \\
 
 Let us at first deduce the equation satisfied by the function $ \drr $. This is a matter of careful algebraic computations, which lead us to deduce the following equations
 \begin{equation}
 \label{eq:eq_delta}
 \left\lbrace
 \begin{aligned}
 & \partial_t \drr - \mathbb{D}\drr +\frac{1}{\varepsilon}\PA \drr = -\frac{1}{\varepsilon} \nabla \tilde{p}^\varepsilon -
 \left( 
 F^\varepsilon_{r, R}
+ G^\varepsilon_{r, R} 
  \right)
- \left( 1-\Psi_{r, R} \left( D \right) \right) \Lambda \left( \uh \right) 
  ,\\
  & \dive \drr =0,\\
 & \left. \drr \right|_{t=0} = \left[ 1 - \Psi_{r, R} \left( D \right) \left( \PP_{+, \varepsilon} + \PP_{-, \varepsilon} \right)  - \mathbb{P}_{0}\right] U_0.
 \end{aligned}
 \right.
 \end{equation}
 Where the modified pressure $ \tilde{p}^\varepsilon = \Phi ^\varepsilon - \varepsilon \bar{p} $ and the nonlinearity is defined as 
 \begin{align*}
 F^\varepsilon_{r, R}= & \ \drr \cdot \nabla \drr + \drr \cdot \nabla \uh   + \drr \cdot \nabla \Wrr 
+ \uh \cdot \nh \drr + w^\varepsilon_{r, R}\cdot \nabla \drr,\\
G^\varepsilon_{r, R} = & \ \uh \cdot \nh w^\varepsilon_{r, R} + w^\varepsilon_{r, R}\cdot \nabla \uh + w^\varepsilon_{r, R} \cdot \nabla w^\varepsilon_{r, R}.
 \end{align*}
 We can now explain why in the equation \eqref{eq:free_wave_CFFS} we introduced artificially the external forcing $ -\Psi_{r, R} \left( D \right) \Lambda $ where $ \Lambda $ is defined in \eqref{eq:def_Lambda}. The pressure $ \bar{p} $ appears with an horizontal gradient only in the equation \eqref{eq:2DNS_stratified}, whence the difference 
 \begin{equation*}
 \nabla \Phi^\varepsilon -\nh \bar{p},
 \end{equation*}
 arising when we compute the difference equation of $ U^\varepsilon-\bar{U} $ 
 is \textit{not the gradient of a scalar function}, being $ \bar{p} $ dependent on the variable $ x_3 $ as it is clear from its expression in terms of the velocity flow $ \uh $ given in \eqref{eq:def_pbar}.\\
 The forcing term $ -\Psi_{r, R} \left( D \right)\Lambda= -\Psi_{r, R} \left( D \right) \left( \PP_{+, \varepsilon}+\PP_{-, \varepsilon} \right) \Lambda $ on the right-hand-side of \eqref{eq:free_wave_CFFS} is hence a corrector term: it adds the intermediate frequencies of $ \partial_3 \bar{p} $ in order to later obtain a full gradient function in the system \eqref{eq:eq_delta} describing the evolution of $ \drr $. Obviously we require an additional corrector which covers the very low and very high frequencies of $ \Lambda $, for this reason it is present in equation \eqref{eq:eq_delta} the term $ 
- \left( 1-\Psi_{r, R} \left( D \right) \right) \Lambda $. We had as well to use the property \eqref{eq:Lambda=P+P-Lambda} in such process. \\

Let us now select a positive real value $ \eta $ such that
\begin{equation}
\label{eq:condition_on_eta}
\eta < \min \set{ \frac{c}{4C}, 1},
\end{equation}
where $ c=\min \set{\nu,\nu'} $.

 We prove now the following technical lemma:

 \begin{lemma}\label{lem:integrbility_forcing_term_LHfreq}
 Let $ U_0 \in {\cFFSHud} $ and  $ P_0 $ be a Fourier multiplier of order 0. Let us consider a $ \eta > 0 $ satisfying \eqref{eq:condition_on_eta}, then there exist a $ 0<r_\eta=r \leqslant R_\eta=R <\infty $ such that the following bound holds true
 \begin{equation*}
 \left\| \left( 1-\Psi_{r, R} \left( D \right) \right) P_0 \left( D \right) \left( \uh \otimes \uh \right) \right\|_{L^2 \left( \R_+ ; \dot{H}^{\frac{1}{2}}\left( \R^3 \right) \right)}\leqslant \frac{\eta}{3C}.
 \end{equation*}
 \end{lemma}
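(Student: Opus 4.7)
The strategy is to reduce the claim to a standard tail-truncation argument for a function already known to lie in $L^2(\mathbb{R}_+; \dot{H}^{1/2})$.

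First, I would show that $P_0(D)\pare{\uh \otimes \uh} \in L^2 \pare{ \R_+; \dot H^{1/2}(\R^3)}$. Lemma \ref{lem:prod_rules_Sobolev} with $d = 3$ and $s_1 = s_2 = 1$ gives the product estimate
\begin{equation*}
\norm{\uh \otimes \uh (t)}_{\dot H^{1/2}(\R^3)} \leqslant C \norm{\uh (t)}_{\dot H^1 (\R^3)}^2.
\end{equation*}
Applying Proposition \ref{prop:strong_reg_ubar} with $s=\frac{1}{2}$ and interpolating between $L^\infty_t \cFFSHud$ and $L^2_t \dot H^{3/2}(\R^3)$ yields $\uh \in L^4 \pare{ \R_+; \dot H^1 (\R^3)}$, with norm controlled by the initial data. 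Thus $\uh \otimes \uh \in L^2 \pare{ \R_+; \dot H^{1/2}(\R^3)}$, and since $P_0(D)$ is a Fourier multiplier of order zero, the same holds for $P_0(D)(\uh \otimes \uh)$.

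Second, I would exploit the explicit form of the cut-off. By construction $\Psi_{r, R}(\xi) = \chi (|\xi|/R) [1 - \chi(|\xi_h|/r)]$ is a smooth function with $0 \leqslant \Psi_{r, R} \leqslant 1$, and its support complement satisfies
\begin{equation*}
\mbox{supp}(1-\Psi_{r, R}) \subset \set{\xi \in \R^3 : \abs{\xi} \geqslant R} \cup \set{\xi \in \R^3 : \abs{\xi_h} \leqslant 2r}.
\end{equation*}
Consequently, for any $f \in \cFFSHud$,
\begin{equation*}
\norm{ (1-\Psi_{r, R}(D)) f}_{\cFFSHud}^2 \leqslant \int_{\set{\abs{\xi} \geqslant R} \cup \set{\abs{\xi_h} \leqslant 2r}} \abs{\xi} \, \abs{\hat f (\xi)}^2 \, \d\xi,
\end{equation*}
and the right-hand side tends to $0$ as $r \to 0^+$ and $R \to +\infty$ by dominated convergence, the dominating function being $\abs{\xi} \abs{\hat f(\xi)}^2 \in L^1(\R^3_\xi)$.

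Finally, I would apply this pointwise in time to $f(t) = P_0(D)(\uh \otimes \uh)(t)$. The bound $\norm{ (1-\Psi_{r, R}(D)) f(t) }_{\cFFSHud}^2 \leqslant \norm{f(t)}_{\cFFSHud}^2$ provides a majorant that is integrable in $t$ thanks to Step~1, so a second application of dominated convergence gives
\begin{equation*}
\lim_{\substack{r \to 0^+ \\ R \to +\infty}} \norm{ (1-\Psi_{r, R}(D)) P_0(D)(\uh \otimes \uh) }_{L^2 \pare{ \R_+; \cFFSHud}} = 0.
\end{equation*}
Choosing $r = r_\eta$ small enough and $R = R_\eta$ large enough yields the desired bound by $\eta/(3C)$. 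There is no real obstacle here: the only quantitative content is the product-rule plus interpolation estimate, and the rest is a purely qualitative dominated-convergence argument on the Fourier side; in particular, the bounds $r_\eta$ and $R_\eta$ we obtain are non-constructive but depend only on $\eta$ and on the (fixed) norms of $U_0$ and $\oh_0$.
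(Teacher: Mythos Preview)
Your proof is correct and follows essentially the same approach as the paper: both arguments establish $\uh\otimes\uh\in L^2(\R_+;\dot H^{1/2})$ via the product rule $\|\uh\otimes\uh\|_{\dot H^{1/2}}\lesssim\|\uh\|_{\dot H^1}^2$ together with the $\Eud$ bound on $\uh$ from Proposition~\ref{prop:strong_reg_ubar}, and then conclude by dominated convergence as $r\to 0,\,R\to\infty$. The only cosmetic difference is that you split the dominated-convergence step into two (first in $\xi$, then in $t$), whereas the paper applies it once on the product measure $\R_+\times\R^3_\xi$.
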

 \begin{proof}
 The proof is an application of Lebesgue dominated convergence theorem. Indeed the function
 \begin{equation*}
 \left| 1-\Psi_{r, R} \left( \xi \right) \right|^2 \left| \xi \right|\left|  P_0  \left( \xi \right) \right|^2 \left| \mathcal{F} \left( \uh \otimes \uh \right) \left( \xi \right) \right|^2,
 \end{equation*}
 converges point-wise to zero when $ r\to 0, R\to \infty $, hence it suffice to prove that 
 $$
  \left| \xi \right|\left|  P_0  \left( \xi \right) \right|^2 \left| \mathcal{F} \left( \uh \otimes \uh \right) \left( \xi \right) \right|^2 \in L^1 \left( \R_+ ; {L^1} \right) .
$$  
   By Plancherel theorem and product rules in Sobolev spaces we deduce
 \begin{align*}
 \int_0^t \int_{\R^3} \left| \xi \right| P_0  \left( \xi \right)^2 \mathcal{F} \left( \uh \otimes \uh \right)^2 \left( t, \xi \right) \d \xi \d t \leqslant & \ C \left\| \uh \otimes \uh \right\|_{L^2 \left( \R_+ ; {\cFFSHud} \right)}^2
 \\
 \leqslant & \ C \left\| \left\| \uh \right\|_{\dot{H}^1}^2 \right\|_{L^2\left( \R_+ \right)}^2
 \\
 \leqslant & \
 C
 \left\| \uh \right\|_{L^\infty \left( \R_+ ; {\cFFSHud} \right)}^2
 \left\| \nabla \uh \right\|_{L^2 \left( \R_+ ; {\cFFSHud} \right)}^2 <\infty,
 \end{align*}
 thanks to the results in Proposition  \ref{prop:strong_reg_ubar}, concluding.
 \end{proof}

Let us analyze now the initial data of the system \eqref{eq:eq_delta}, it is defined as 
 \begin{align}
\delta_{r, R, 0}= & \
 \left[ 1 - \Psi_{r, R} \left( D \right) \left( \PP_{+, \varepsilon} + \PP_{-, \varepsilon} \right)  - \mathbb{P}_{0}\right] U_0,\label{eq:deltarR0}
 %\\
%  = & \ \left( 1-\Psi_{r, R} \left( D \right) \right) \left( 1-\PP_0 \right)U_0
 \end{align}
 where the projectors $ \PP_{i, \varepsilon} $  defined in \eqref{eq:projectors}, are the projections onto the eigendirections $ E_i^\varepsilon, \ i=0,\pm $ defined in \eqref{eq:eigemvectors_0} and \eqref{eq:eigemvectors_pm}.
The initial data is localized onto the very hi and low frequencies along the eigendirections of the eigenvectors $ E^\pm $ defined in \eqref{eq:eigemvectors_pm}. Unfortunately the projectors $ \PP_{\pm, \varepsilon} $ are \textit{not bounded} on such set of frequencies, hence we cannot deduce directly the regularity of $ \delta_{r, R, 0} $ in terms of the regularity of $ U_0 $. Nonetheless we can prove the following result

\begin{lemma}\label{lem:reg_deltarR0}
Let us fix $ 0<r \leqslant R <\infty $ and let $ \delta_{r, R, 0} $ be the initial data of \eqref{eq:eq_delta} be defined as in \eqref{eq:deltarR0}. For any $ s\in \RR $ if $ U_0 \in \cFFSHs $ there exists a constant $ C $ which does not depend on the parameters $ r, R $ of the localization $ \Crr $ such that
\begin{equation*}
\left\| \delta_{r, R, 0} \right\|_{\cFFSHs}\leqslant C \left\| U_0 \right\|_{\cFFSHs}.
\end{equation*}
\end{lemma}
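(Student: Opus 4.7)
The plan is to exploit the fact that although each $ \PP_{\pm, \varepsilon} $ individually carries unbounded symbols (the eigenvectors $ E_\pm^\varepsilon $ in \eqref{eq:eigemvectors_pm} contain the factor $ 1/|\xi_h| $), their \emph{sum} becomes tame because, on the space of solenoidal vector fields, the three projectors $ \PP_0 $, $ \PP_{+, \varepsilon} $, $ \PP_{-, \varepsilon} $ together form a resolution of the identity. First I would observe that inspecting the first row of $ Q^{-1} $ in \eqref{eq:definition_Q-1} reveals it to be proportional to $ \xi \cdot \hat{u}_0 $, which vanishes on the initial datum since $ \dive u_0 = 0 $; consequently one has the clean algebraic identity
\begin{equation*}
U_0 = \PP_0 U_0 + \PP_{+, \varepsilon} U_0 + \PP_{-, \varepsilon} U_0.
\end{equation*}

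Substituting this decomposition into \eqref{eq:deltarR0} produces the crucial simplification
\begin{equation*}
\delta_{r, R, 0}
= \bigl( \PP_{+, \varepsilon} + \PP_{-, \varepsilon} \bigr) U_0 - \Psi_{r, R}(D) \bigl( \PP_{+, \varepsilon} + \PP_{-, \varepsilon} \bigr) U_0
= \bigl( 1 - \Psi_{r, R}(D) \bigr) \bigl( I - \PP_0 \bigr) U_0.
\end{equation*}
The virtue of the last expression is that the $ \varepsilon $-dependent oscillating projectors have been eliminated entirely, and what remains is a composition of two Fourier multipliers neither of which involves $ \varepsilon $, $ r $ or $ R $ in a delicate way.

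The two bounds required are then straightforward. The symbol $ 1 - \Psi_{r, R}(\xi) $ lies in $ [0,1] $ pointwise by the construction of $ \chi $, hence the multiplier $ 1 - \Psi_{r, R}(D) $ is a contraction on every $ \Hs $, uniformly in $ r, R $. The operator $ I - \PP_0 $ is a matrix-valued Fourier multiplier of order zero, as is evident from the explicit formula \eqref{eq:definition_P0} showing that $ \PP_0 $ has a homogeneous symbol of degree zero, smooth off the vertical axis $ \{\xi_h = 0\} $; therefore it acts boundedly on $ \Hs $ for every real $ s $ with a universal constant. Combining these two facts yields the lemma with a constant $ C $ independent of $ r $, $ R $ and $ \varepsilon $.

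The only conceptual point to verify carefully is the algebraic identity $ U_0 = \PP_0 U_0 + \PP_{+, \varepsilon} U_0 + \PP_{-, \varepsilon} U_0 $; this is the one place where the divergence-free assumption on the velocity field of $ U_0 $ enters, and it is precisely what collapses the would-be singular sum $ \PP_{+, \varepsilon} + \PP_{-, \varepsilon} $ into the regular operator $ I - \PP_0 $. Beyond this observation the argument is purely a matter of symbol boundedness and requires no dispersive input or smallness of parameters.
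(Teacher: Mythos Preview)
Your proof is correct and follows essentially the same route as the paper: both rewrite $\delta_{r,R,0}$ as $(1-\Psi_{r,R}(D))(1-\PP_0)U_0$ via the resolution of identity $1=\PP_0+\PP_{+,\varepsilon}+\PP_{-,\varepsilon}$ on solenoidal fields, and then conclude from the fact that $1-\PP_0$ is a zero-order Fourier multiplier with bound independent of $r,R$. Your additional remark explaining the resolution of identity through the first row of $Q^{-1}$ and the divergence-free condition is a welcome clarification that the paper leaves implicit.
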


\begin{proof}
 Let us remark that
$ 1= \PP_0 + \PP_{+, \varepsilon} + \PP_{-, \varepsilon}
$, this in turn implies that
\begin{equation*}
1 - \Psi_{r, R} \left( D \right) \left( \PP_{+, \varepsilon} + \PP_{-, \varepsilon} \right)  - \mathbb{P}_{0}
=
\left( 1-\Psi_{r, R} \left( D \right) \right) \left( 1-\PP_0 \right),
\end{equation*}
whence
\begin{equation*}
\delta_{r, R, 0}= \left( 1-\Psi_{r, R} \left( D \right) \right) \left( 1-\PP_0 \right) U_0.
\end{equation*}
The projector $ \PP_0 $ has been evaluated in detail in \eqref{eq:definition_P0}, and in particular it is a Fourier multiplier of order zero. This implies that the operator $ \left( 1-\Psi_{r, R} \left( D \right) \right) \left( 1-\PP_0 \right) $ is as well a Fourier multiplier of order zero, such that
\begin{equation*}
\norm{\left( 1-\Psi_{r, R} \left( D \right) \right) \left( 1-\PP_0 \right)}_{\mathcal{L}\pare{\Hs}} \leqslant \norm{ 1-\PP_0 }_{\mathcal{L}\pare{\Hs}} \leqslant C < \infty.
\end{equation*}
  
\end{proof}

 Selecting hence an $ \eta > 0 $ which satisfies \eqref{eq:condition_on_eta} and an $ U_0 \in {\cFFSHud} $ 
 Lemma \ref{lem:reg_deltarR0} and a dominated convergence argument allow us hence to choose some positive, real $ 0<r<R $ which depend on $ \eta $  such that
 \begin{equation}
 \label{eq:hyp_smallness_initial_data}
 \left\| \delta_{r, R, 0} \right\|_{{\cFFSHud}}< \frac{\eta}{3 C}.
 \end{equation}

The result we prove in this section is the following one:
\begin{prop}
\label{prop:convergence}
Let us consider a positive, real $ \eta $ which satisfies \eqref{eq:condition_on_eta}, and let the initial data $ U_0 \in \cFFSLtwo \cap {\cFFSHud} $ be such that $ \oh_0= -\partial_2 U_0^1 +\partial_1 U_0^2 \in \cFFSLtwo $.
Let us set $ 0< r \ll 1 \ll R $ positive parameters depending on $ \eta $ be such that $ \delta_{r, R, 0} $ defined in \eqref{eq:deltarR0} satisfies \eqref{eq:hyp_smallness_initial_data}.  Let  $ \left( \drr \right)_{\varepsilon > 0} $ be a sequence indexed by $ \varepsilon $  of local solutions of \eqref{eq:eq_delta}, there exists a 
\begin{equation*}
\varepsilon_0=\varepsilon_0\pare{\eta}=\frac{1}{C} \pare{\frac{\eta^2}{3 C_{r, R}}}^8
\end{equation*}
such that for each $ \varepsilon\in \pare{0, \varepsilon_0} $ and $ t\in\left[0, T_{U_0} \right] $
\begin{equation}
\label{eq:bound_norm_E1/2_delta}
\norm{ \drr \left( t \right)}^2_{\cFFSHud}
+
c
\int_0^t \norm{ \nabla \drr \left( \tau \right)}^2_{\cFFSHud}\d \tau
\leqslant \eta^2,
\end{equation}
where $ c=\min\set{\nu, \nu'} $ and $ T_{U_0} $ is the maximal lifespan of $ U^\varepsilon $ given in Theorem \ref{thm:FK_applied}. 
\end{prop}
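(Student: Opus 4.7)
\textbf{Proof plan for Proposition \ref{prop:convergence}.} The argument is a bootstrap in $\dot{H}^{1/2}$ for the perturbation equation \eqref{eq:eq_delta}. Set
\begin{equation*}
T^\star=\sup\set{ t\in[0,T_{U_0}] \;:\; \norm{\drr(t)}_{\cFFSHud}^2+c\int_0^t\norm{\nabla\drr(\tau)}_{\cFFSHud}^2\d\tau\leqslant \eta^2 },
\end{equation*}
where $c=\min\set{\nu,\nu'}$. By continuity of $\drr$ in $\cFFSHud$ (guaranteed by Theorem \ref{thm:FK_applied} applied to $U^\varepsilon$, together with Lemma \ref{lem:existence_sol_free_wave} and Proposition \ref{prop:strong_reg_ubar}) and by the smallness of the initial datum \eqref{eq:hyp_smallness_initial_data}, $T^\star>0$. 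The goal is to show $T^\star=T_{U_0}$ by proving that, for $\varepsilon$ sufficiently small, the bound \eqref{eq:bound_norm_E1/2_delta} holds strictly on $[0,T^\star)$, which rules out $T^\star<T_{U_0}$.

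First I would perform the $\cFFSHud$ energy estimate on \eqref{eq:eq_delta}. Since $\mathcal{A}$ is skew-symmetric and $\drr$ is divergence-free, the penalized term $\varepsilon^{-1}\PA\drr$ and the pressure $\varepsilon^{-1}\nabla\tilde p^\varepsilon$ contribute nothing. The parabolic part yields $c\,\norm{\nabla\drr}_{\cFFSHud}^2$. It remains to bound the source $\left(1-\Psi_{r,R}(D)\right)\Lambda(\uh)$ and the trilinear contributions coming from $F^\varepsilon_{r,R}$ and $G^\varepsilon_{r,R}$. For the source, observe that $\Lambda(\uh)=\partial_3 p_0(D)(\uh\otimes\uh)$ with $p_0$ a Fourier multiplier of order zero, so Lemma \ref{lem:integrbility_forcing_term_LHfreq} gives
\begin{equation*}
\norm{(1-\Psi_{r,R}(D))\Lambda(\uh)}_{L^2(\RR_+;\cFFSHud)}\leqslant \frac{\eta}{3C},
\end{equation*}
provided $r$ small and $R$ large are chosen accordingly.

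For the nonlinearities, I would use the Bony decomposition together with the paraproduct/remainder estimates that underlie Lemma \ref{lem:prod_rules_Sobolev}. The purely quadratic term $\drr\cdot\nabla\drr$ satisfies the standard bound $|(\drr\cdot\nabla\drr|\drr)_{\cFFSHud}|\leqslant C\norm{\drr}_{\cFFSHud}\norm{\nabla\drr}_{\cFFSHud}^2$, so under the bootstrap assumption and the choice $\eta<c/(4C)$ in \eqref{eq:condition_on_eta} this term is absorbed by the parabolic dissipation. The $\uh$-coupling terms in $F^\varepsilon_{r,R}$ and $G^\varepsilon_{r,R}$ are linear in $\drr$ (or independent of it) and are controlled using Proposition \ref{prop:strong_reg_ubar}: typically
\begin{equation*}
\left|(\uh\cdot\nh\drr|\drr)_{\cFFSHud}\right|+\left|(\drr\cdot\nabla\uh|\drr)_{\cFFSHud}\right|\leqslant \frac{c}{8}\norm{\nabla\drr}_{\cFFSHud}^2+C\,\norm{\uh}_{\dot H^1}^2\norm{\drr}_{\cFFSHud}^2,
\end{equation*}
which, by the global-in-time control $\norm{\uh}_{L^2_t\dot H^1}<\infty$ from Proposition \ref{prop:strong_reg_ubar}, produces a Gronwall factor that is finite and independent of $\varepsilon$. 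The terms in $G^\varepsilon_{r,R}$ that do not involve $\drr$ are bilinear in $\uh$ and $\Wrr$; using that $\Wrr\in L^\infty_t L^2\cap L^2_t\dot H^1$ by Lemma \ref{lem:existence_sol_free_wave}, they contribute a term proportional to $\norm{\Wrr}_{L^2_t\dot H^1}\norm{\uh}_{L^\infty_t\cFFSHud}$, also independent of $\varepsilon$ up to $r,R$-constants.

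The crucial step — and the main obstacle — is the treatment of the remaining terms involving $\Wrr$, i.e.\ $\drr\cdot\nabla\Wrr$, $\Wrr\cdot\nabla\drr$, and $\Wrr\cdot\nabla\Wrr$. These must extract a positive power of $\varepsilon$, and this is achieved through the Strichartz-type bound \eqref{eq:Strichartz_estimates} of Theorem \ref{thm:Strichartz_est}. The plan is to distribute the $\cFFSHud$ norm between two factors via product rules, placing $\Wrr$ in $L^p_t L^\infty_x$ for a suitably chosen $p\in[1,\infty)$ (e.g.\ $p=4$) and distributing the remaining derivatives between $\drr$ (using the bootstrap control and dissipation) and $\uh$ (using Proposition \ref{prop:strong_reg_ubar}). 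A typical bound to aim at is
\begin{equation*}
\int_0^t\left|(\Wrr\cdot\nabla\drr|\drr)_{\cFFSHud}\right|\d\tau\leqslant \frac{c}{8}\int_0^t\norm{\nabla\drr}_{\cFFSHud}^2\d\tau + C\,\norm{\Wrr}_{L^4_tL^\infty_x}^4\int_0^t\norm{\drr}_{\cFFSHud}^2\d\tau,
\end{equation*}
and similarly for the other mixed terms, so that the $w$-quantities enter only through $\norm{\Wrr}_{L^p_tL^\infty_x}\leqslant C_{r,R}(1+\nu^{-1})\varepsilon^{1/(4p)}\max\set{\norm{U_0}_{L^2},\norm{U_0}_{L^2}^2}$. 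Collecting everything, a Gronwall argument gives
\begin{equation*}
\norm{\drr(t)}_{\cFFSHud}^2+\frac{c}{2}\int_0^t\norm{\nabla\drr}_{\cFFSHud}^2\d\tau\leqslant \left(\norm{\delta_{r,R,0}}_{\cFFSHud}^2+\frac{\eta^2}{9}+C_{r,R}\,\varepsilon^{1/8}\right)e^{C(U_0,\nu,K)}.
\end{equation*}
Choosing finally $\varepsilon_0=C^{-1}(\eta^2/(3C_{r,R}))^8$ as in the statement ensures that the right-hand side is strictly less than $\eta^2$, closing the bootstrap and giving $T^\star=T_{U_0}$. The delicate bookkeeping lies in making sure that the constants depending on $r,R$ multiply a positive power of $\varepsilon$ at every occurrence of $\Wrr$ and do not multiply the Gronwall-absorbed factor $\norm{\drr}_{\cFFSHud}^2$.
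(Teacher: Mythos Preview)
Your overall architecture is the same as the paper's: a bootstrap on the $\cFFSHud$ energy of $\drr$, parabolic absorption of the $\drr\cdot\nabla\drr$ term via \eqref{eq:condition_on_eta}, the source term handled by Lemma \ref{lem:integrbility_forcing_term_LHfreq}, a Gronwall factor produced by the $\uh$--couplings, and the Strichartz estimate \eqref{eq:Strichartz_estimates} to extract a positive power of $\varepsilon$ from the $\Wrr$--terms. The paper carries this out through Lemmas \ref{lem:bilinear bounds} and \ref{lem:time_reg_nonlinearity}, but the strategy is identical.

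There is, however, a genuine gap in your treatment of the two mixed terms $\uh\cdot\nh\Wrr$ and $w^\varepsilon_{r,R}\cdot\nabla\uh$ in $G^\varepsilon_{r,R}$. You write that these ``contribute a term proportional to $\norm{\Wrr}_{L^2_t\dot H^1}\norm{\uh}_{L^\infty_t\cFFSHud}$, also independent of $\varepsilon$ up to $r,R$-constants'', and then you do not include them among the terms to which Strichartz is applied. But a contribution that is merely \emph{bounded} and not \emph{small} cannot be absorbed anywhere: it is neither multiplied by $\norm{\drr}_{\cFFSHud}^2$ (so it cannot go into the Gronwall exponent) nor does it carry a power of $\varepsilon$, so it would appear as a fixed positive constant on the right-hand side of your final inequality, preventing the bootstrap from closing at level $\eta^2$ with $\eta$ small. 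The paper (Lemma \ref{lem:bilinear bounds}, third and fourth inequalities) instead places $\Wrr$ in $L^\infty_x$ in both of these terms, exploiting the frequency localization to write, for instance,
\begin{equation*}
\left|\left(\left.\uh\cdot\nh\Wrr\right|\drr\right)_{\cFFSHud}\right|\leqslant C_{r,R}\norm{\nabla\drr}_{\cFFSLtwo}\norm{\Wrr}_{\Linfty}\norm{\uh}_{\cFFSLtwo},
\end{equation*}
so that after time integration the factor $\norm{\Wrr}_{L^p_t L^\infty_x}$ appears and \eqref{eq:Strichartz_estimates} gives the required $\varepsilon^{1/(4p)}$. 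Your concluding sentence about ``making sure that the constants depending on $r,R$ multiply a positive power of $\varepsilon$ at every occurrence of $\Wrr$'' shows you are aware of the principle; you simply did not apply it to these two terms.
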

 
 The proof of the above proposition consists in a bootstrap argument. 
 The main step in order to prove such bootstrap argument is an energy bound on the nonlinearity $ F^\varepsilon_{r, R} + G^\varepsilon_{r, R} $. This is formalized in the following lemma:
 
 \begin{lemma}\label{lem:bilinear bounds}
 The following bounds hold true
 \begin{align*}
 \left| \left(\left. \drr \cdot \nabla \drr \right| \drr  \right)_{{\cFFSHud}} \right| 
 \leqslant & \ C \left\| \drr \right\|_{{\cFFSHud}} \left\| \nabla \drr \right\|_{{\cFFSHud}}^2,\\
 %------------------------------------
 \left| \left(\left. \dive \left( \drr \otimes \left( \uh + \Wrr \right) \right) \right| \drr  \right)_{{\cFFSHud}} \right|\leqslant & \ C 
 \left( \left\| \uh \right\|_{{\cFFSHud}}^{1/2}  \left\| \nabla \uh \right\|_{{\cFFSHud}}^{1/2}
 +
 \left\| \Wrr \right\|_{{\cFFSHud}}^{1/2}  \left\| \nabla \Wrr \right\|_{{\cFFSHud}}^{1/2} \right)\\
 & \ \times
 \left\| \drr \right\|_{{\cFFSHud}}^{1/2}  \left\| \nabla \drr \right\|_{{\cFFSHud}}^{3/2},\\
 %--------------------------------------
 \left| \left(\left. \uh\cdot \nh \Wrr \right| \drr  \right)_{{\cFFSHud}} \right| \leqslant & \ 
 C_{r, R} \left\| \nabla \drr \right\|_{{\cFFSLtwo}} \left\| \Wrr \right\|_{\Linfty} \left\| \uh \right\|_{{\cFFSLtwo}},\\
 %---------------------------------------------------
 \left| \left(\left.  w^\varepsilon  _{r,R}\cdot \nabla \uh  \right| \drr  \right)_{{\cFFSHud}}  \right|
 \leqslant 
 & \
 C \ \left\| \Wrr \right\|_{\Linfty} \left\| \uh \right\|_{{\cFFSHud}}^{1/2} \left\| \nabla \uh \right\|_{{\cFFSHud}}^{1/2}\\
 %----------------------------------------------
& \  \times
 \left\| \drr \right\|_{{\cFFSHud}}^{1/2} \left\| \nabla \drr \right\|_{{\cFFSHud}}^{1/2},
 \\
 %-------------------------------------------
 \left| \left(\left.  w^\varepsilon  _{r,R}\cdot \nabla \Wrr  \right| \drr  \right)_{{\cFFSHud}}  \right|
 \leqslant 
 & \
 C_{r, R} \left\| \Wrr \right\|_{\Linfty} \left\| \Wrr \right\|_{{\cFFSLtwo}} \left\| \drr \right\|_{{\cFFSHud}}.
 \end{align*}
 \end{lemma}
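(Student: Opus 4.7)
The plan is to treat the five bounds as trilinear estimates in $\cFFSHud$ built from a common toolkit: the Plancherel--type identity $\pare{f|g}_{\cFFSHud}=\pare{f|\pare{-\Delta}^{1/2}g}_{L^2}$, the integration by parts relation $\pare{\dive V|Y}_{\cFFSHud}=-\pare{V|\nabla Y}_{\cFFSHud}$, either Lemma~\ref{lem:prod_rules_Sobolev} or the Bernstein inequality \eqref{eq:Bernstein1}, and the interpolation $\norm{f}_{\dot H^1}\leq\norm{f}_{\cFFSHud}^{1/2}\norm{\nabla f}_{\cFFSHud}^{1/2}$. The divergence-free conditions $\dive\drr=\dive\uh=\dive\Wrr=0$ are used throughout to turn $u\cdot\nabla v$ into $\dive(u\otimes v)$ when $\dive u=0$ and to justify the integration by parts above.

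For bounds 1 and 2 no frequency localization is needed. Using $\dive\drr=\dive(\uh+\Wrr)=0$ I rewrite $\drr\cdot\nabla\drr=\dive(\drr\otimes\drr)$ and work directly with $\dive(\drr\otimes(\uh+\Wrr))$; integration by parts in $\cFFSHud$ followed by Cauchy--Schwarz gives $\norm{\drr\otimes V}_{\cFFSHud}\norm{\nabla\drr}_{\cFFSHud}$ with $V\in\set{\drr,\uh+\Wrr}$. Lemma~\ref{lem:prod_rules_Sobolev} applied with $s_1=s_2=1<d/2=3/2$ (so that $s_1+s_2-d/2=1/2$) yields $\norm{\drr\otimes V}_{\cFFSHud}\leq C\norm{\drr}_{\dot H^1}\norm{V}_{\dot H^1}$; interpolating each $\dot H^1$ factor between $\cFFSHud$ and $\dot H^{3/2}$ produces the stated products of half powers.

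For the last three bounds I would exploit the essential Fourier support of $\Wrr$ in $\mathcal C_{r/2,2R}$. For bound 3, pair as $\pare{\uh\cdot\nh\Wrr|\pare{-\Delta}^{1/2}\drr}_{L^2}$, apply H\"older in $L^2\cdot L^\infty\cdot L^2$, and use Bernstein $\norm{\nh\Wrr}_{L^\infty}\leq CR\norm{\Wrr}_{\Linfty}$. For bound 4, pair the same way and apply H\"older to get $\norm{\Wrr}_{\Linfty}\norm{\nabla\uh}_{L^2}\norm{\pare{-\Delta}^{1/2}\drr}_{L^2}=\norm{\Wrr}_{\Linfty}\norm{\uh}_{\dot H^1}\norm{\drr}_{\dot H^1}$; as $\uh$ is not frequency cut, both $\dot H^1$ factors are split by the same interpolation as in paragraph two. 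For bound 5, note that $\widehat{\Wrr\cdot\nabla\Wrr}$ is supported in a ball of radius $\leq 4R$, so the Plancherel estimate $\norm{f}_{\cFFSHud}\leq C_R\norm{f}_{\cFFSLtwo}$ applies to this product; Cauchy--Schwarz in $\cFFSHud$ then gives $\norm{\Wrr\cdot\nabla\Wrr}_{\cFFSHud}\norm{\drr}_{\cFFSHud}$, and the chain $\norm{\Wrr\cdot\nabla\Wrr}_{\cFFSHud}\leq C_R\norm{\Wrr\cdot\nabla\Wrr}_{\cFFSLtwo}\leq C_R\norm{\Wrr}_{\Linfty}\norm{\nabla\Wrr}_{\cFFSLtwo}\leq C_{r,R}\norm{\Wrr}_{\Linfty}\norm{\Wrr}_{\cFFSLtwo}$ (using Bernstein once more for the last step) closes the bound. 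The only difficulty is bookkeeping: the localization constant $C_{r,R}$ appears precisely in bounds 3 and 5, where Bernstein is invoked to move norms across the frequency support of $\Wrr$, whereas the universal constant $C$ suffices for bounds 1, 2, 4, where only product rules and interpolation are used. No step is genuinely hard; the subtlety is choosing the right factorization of each trilinear form so as to place $\nabla$ on the factor carrying the parabolic budget $\norm{\nabla\drr}_{\cFFSHud}$.
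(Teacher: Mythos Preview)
Your proposal is correct and follows essentially the same route as the paper: integration by parts to move the divergence onto $\drr$, Cauchy--Schwarz in $\cFFSHud$, the product rule Lemma~\ref{lem:prod_rules_Sobolev} with $s_1=s_2=1$, and interpolation $\norm{\cdot}_{\dot H^1}\leqslant\norm{\cdot}_{\cFFSHud}^{1/2}\norm{\nabla\cdot}_{\cFFSHud}^{1/2}$ for the first two bounds, then H\"older together with the Bernstein localization of $\Wrr$ in $\mathcal C_{r/2,2R}$ for the last three. The paper actually skips the fourth bound entirely in its proof, so your explicit treatment of it (pairing, H\"older, then interpolating both $\dot H^1$ factors) fills a gap the paper leaves to the reader.
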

 
 Thanks to the above bounds we can deduce the following bounds for the nonlinearity $ F^\varepsilon_{r, R} + G^\varepsilon_{r, R} $, which shall be the ones used in the proof of the bootstrap argument
 \begin{lemma} \label{lem:time_reg_nonlinearity}
 The following bounds hold true
 \begin{align}
 \label{bound_F}
  &\begin{multlined}
 \left| \left(\left. F^\varepsilon_{r, R} \right| \drr   \right)_{{\cFFSHud}} \right|  
 \leqslant
  \left( \frac{c}{16} + C \left\| \drr \right\|_{{\cFFSHud}} \right)
  \left\| \nabla \drr \right\|_{{\cFFSHud}}^2 + f_{r, R} \ \left\| \drr \right\|_{{\cFFSHud}}^2,
 \end{multlined}\\
  \label{bound_G}
 & \begin{multlined}
  \left| \left(\left. G^\varepsilon_{r, R} \right| \drr   \right)_{{\cFFSHud}} \right|  
 \leqslant
 \frac{c}{16} \left\| \nabla \drr \right\|_{{\cFFSHud}}^2 
 + g_{1, \varepsilon}^{r, R} \ \left\| \drr \right\|_{{\cFFSHud}}^2
 \\[4mm]
 + \pare{ g_2 ^{r, R} + g_3^{r, R}} \ \left\| \Wrr \right\|_{\Linfty}
 +g_4 \ \left\| \Wrr \right\|_{\Linfty}^2,
  \end{multlined}
 \end{align}
 where
 \begin{equation*}
  \begin{aligned}
 f_{r, R} \left( t \right) = & \ C \left( \left\| \uh \left( t \right) \right\|_{{\cFFSHud}}^{2}  \left\| \nabla \uh \left( t \right) \right\|_{{\cFFSHud}}^{2}\right.\\
 & \
 +
 \left.\left\| \Wrr \left( t \right) \right\|_{{\cFFSHud}}^{2}  \left\| \nabla \Wrr \left( t \right) \right\|_{{\cFFSHud}}^{2}\right), 
 && \in L^1 \left( \R_+ \right) ,\\
 g_{1, \varepsilon}^{r, R} \left( t \right) = & \ C
 \left\| \nabla \uh \left( t \right) \right\|_{{\cFFSHud}}^2+ 
 C_{r, R}  \left\| \Wrr \right\|_{{\cFFSLtwo}}\left\| \Wrr \right\|_{\Linfty},
   &&\in L^1 \left( \R_+ \right) ,\\
 g_2^{r, R}\left( t \right) = & \ 
 C_{r, R}  \left\| \Wrr \right\|_{{\cFFSLtwo}}  , 
 && \in L^\infty \left( \R_+ \right),\\
 g_3^{r, R}\left( t \right) = & \ 
 C_{r, R} \norm{ \uh }_{{\cFFSLtwo}} \norm{ \nabla \drr}_{{\cFFSLtwo}}
 && \in L^2 \pare{ \R_+}, 
\\ 
 g_4 \left( t \right)= & \ C \left\| \uh \left( t \right) \right\|_{{\cFFSHud}}, 
 && \in L^\infty \left( \R_+ \right).
 \end{aligned}
  \end{equation*}
  Moreover if $ U_0\in {\cFFSHud} $ then $ f = f_{r, R} \in L^1 \left( \R_+ \right) $ uniformly with respect to the parameters $ r, R $. For $ 0< \varepsilon< \varepsilon_0 \left( r, R \right) $ the function $ g_1 = g_{1, \varepsilon}^{r, R} $ belongs to $L^1 \left( \R_+ \right) $ uniformly with respect to the parameters $ r, R $.
 \end{lemma}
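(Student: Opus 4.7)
The lemma is a direct consequence of Lemma \ref{lem:bilinear bounds} combined with careful applications of Young's inequality, so the plan is to treat each of the five bilinear terms listed there, distribute the factors involving $\|\nabla\drr\|_{\cFFSHud}$ between an absorbable diffusive part and source terms, and then identify $f_{r,R}$, $g_{1,\varepsilon}^{r,R}$, $g_2^{r,R}$, $g_3^{r,R}$, $g_4$ in the outcome.

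For \eqref{bound_F}, the contribution of $(\drr\cdot\nabla\drr\mid\drr)_{\cFFSHud}$ already appears in the target form as $C\|\drr\|_{\cFFSHud}\|\nabla\drr\|_{\cFFSHud}^2$. For the mixed transports in $\dive(\drr\otimes(\uh+\Wrr))$, the bilinear estimate features $\|\drr\|_{\cFFSHud}^{1/2}\|\nabla\drr\|_{\cFFSHud}^{3/2}$ multiplied by $\|\uh\|_{\cFFSHud}^{1/2}\|\nabla\uh\|_{\cFFSHud}^{1/2}+\|\Wrr\|_{\cFFSHud}^{1/2}\|\nabla\Wrr\|_{\cFFSHud}^{1/2}$; I would apply Young in the form $xy^{3/2}\leq C_\delta x^{4}+\delta y^{2}$ with $\delta=c/32$, producing the $\tfrac{c}{16}\|\nabla\drr\|_{\cFFSHud}^{2}$ absorption together with the source $C\bigl(\|\uh\|_{\cFFSHud}^{2}\|\nabla\uh\|_{\cFFSHud}^{2}+\|\Wrr\|_{\cFFSHud}^{2}\|\nabla\Wrr\|_{\cFFSHud}^{2}\bigr)\|\drr\|_{\cFFSHud}^{2}$, which is exactly $f_{r,R}\|\drr\|_{\cFFSHud}^{2}$. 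The integrability $f_{r,R}\in L^{1}(\R_{+})$, uniformly in $r,R$, will follow from Proposition \ref{prop:strong_reg_ubar} applied to $\uh$ and from \eqref{eq:est_H1/2_Wrr} of Lemma \ref{lem:existence_sol_free_wave} applied to $\Wrr$.

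For \eqref{bound_G} the same machinery applies, with the extra constraint that $\|\Wrr\|_{\Linfty}$ must be factored out explicitly so that later the smallness from Theorem \ref{thm:Strichartz_est} can be invoked. The term $\uh\cdot\nh\Wrr$ already has the form $\|\Wrr\|_{\Linfty}\cdot[C_{r,R}\|\uh\|_{\cFFSLtwo}\|\nabla\drr\|_{\cFFSLtwo}]$, which I identify with $g_{3}^{r,R}\|\Wrr\|_{\Linfty}$; its $L^{2}(\R_{+})$ character follows from Lemma \ref{lem:Ler_sol_ubar_omega} for $\|\uh\|_{\cFFSLtwo}$ and from the $L^{2}_{t}\dot H^{1}$ control of $\drr$ in Lemma \ref{lem:L2regularity_delta}. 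For $\Wrr\cdot\nabla\uh$ I would first peel off $\tfrac{1}{2}\|\Wrr\|_{\Linfty}^{2}\|\uh\|_{\cFFSHud}$ via $XY\leq X^{2}/2+Y^{2}/2$ with $X=\|\Wrr\|_{\Linfty}\|\uh\|_{\cFFSHud}^{1/2}$, and then apply Young once more to the remainder $\|\nabla\uh\|_{\cFFSHud}\|\drr\|_{\cFFSHud}\|\nabla\drr\|_{\cFFSHud}$ to produce a second chunk of $\tfrac{c}{16}\|\nabla\drr\|_{\cFFSHud}^{2}$ and a contribution $C\|\nabla\uh\|_{\cFFSHud}^{2}\|\drr\|_{\cFFSHud}^{2}$ that feeds $g_{1,\varepsilon}^{r,R}$. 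For $\Wrr\cdot\nabla\Wrr$ the bound $C_{r,R}\|\Wrr\|_{\Linfty}\|\Wrr\|_{\cFFSLtwo}\|\drr\|_{\cFFSHud}$ is split by the trivial inequality $xyz\leq\tfrac{x}{2}(y^{2}+z^{2})$ into a contribution to $g_{2}^{r,R}\|\Wrr\|_{\Linfty}$ and one to $g_{1,\varepsilon}^{r,R}\|\drr\|_{\cFFSHud}^{2}$, using the time-uniform bound on $\|\Wrr\|_{\cFFSLtwo}$ from Lemma \ref{lem:existence_sol_free_wave}.

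The only step beyond routine bookkeeping is the uniform-in-$(r,R)$ integrability of the $\Wrr$-parts of $f_{r,R}$ and of $g_{1,\varepsilon}^{r,R}$, which is why $\varepsilon_{0}$ must depend on $r,R$. The dangerous term is $C_{r,R}\|\Wrr\|_{\cFFSLtwo}\|\Wrr\|_{\Linfty}$ inside $g_{1,\varepsilon}^{r,R}$: the Strichartz estimate \eqref{eq:Strichartz_estimates} with $p=1$ provides $\|\Wrr\|_{L^{1}(\R_{+};\Linfty)}\leq C_{r,R}(1+\nu^{-1})\,\varepsilon^{1/4}\max\{\|U_{0}\|_{\cFFSLtwo},\|U_{0}\|_{\cFFSLtwo}^{2}\}$, which is arbitrarily small for $\varepsilon$ small depending on $r,R$. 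The main obstacle is therefore not algebraic but organisational: making sure that each of the five bilinear terms lands in exactly one of the five slots $\|\nabla\drr\|_{\cFFSHud}^{2}$, $f\|\drr\|_{\cFFSHud}^{2}$, $g_{1}\|\drr\|_{\cFFSHud}^{2}$, $(g_{2}+g_{3})\|\Wrr\|_{\Linfty}$, $g_{4}\|\Wrr\|_{\Linfty}^{2}$ with the correct dependence on the cut-off $(r,R)$ and on $\varepsilon$.
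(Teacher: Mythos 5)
Your proposal is correct and follows essentially the same route as the paper: invoke Lemma \ref{lem:bilinear bounds}, redistribute the factors of $\norm{\nabla \drr}_{\cFFSHud}$ by Young's inequality so that a small multiple of $\norm{\nabla \drr}_{\cFFSHud}^2$ can be absorbed, read off $f_{r,R}, g_{1,\varepsilon}^{r,R},\dots,g_4$, and obtain the integrability statements from Proposition \ref{prop:strong_reg_ubar}, Lemma \ref{lem:existence_sol_free_wave}, Lemma \ref{lem:L2regularity_delta} (for $g_3^{r,R}$) and the Strichartz bound \eqref{eq:Strichartz_estimates} with $p=1$ (for the $\norm{\Wrr}_{\cFFSLtwo}\norm{\Wrr}_{\Linfty}$ part of $g_1$). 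The only deviation is in the $w^\varepsilon_{r,R}\cdot\nabla\Wrr$ term, where your splitting $xyz\leqslant \tfrac{x}{2}(y^2+z^2)$ produces $C_{r,R}\norm{\Wrr}_{\cFFSLtwo}^2$ and $C_{r,R}\norm{\Wrr}_{\Linfty}$ in place of the stated $g_2^{r,R}$ and $g_{1}$-contribution (the paper uses $\norm{\drr}_{\cFFSHud}\leqslant\tfrac12(1+\norm{\drr}_{\cFFSHud}^2)$ instead), but these substitutes enjoy the same $L^\infty$ and small-$L^1$ properties, so the subsequent bootstrap is unaffected.
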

 The proofs of Lemmas \ref{lem:bilinear bounds} and \ref{lem:time_reg_nonlinearity} are postponed.\\

 We can  prove now the  result stated in Proposition \ref{prop:convergence}.
 \\
 \textit{Proof of Proposition \ref{prop:convergence}:} Let us perform an $ {\cFFSHud} $ energy estimate onto the system \eqref{eq:eq_delta}, we indeed deduce that
 \begin{multline}\label{stima1}
 \frac{1}{2}\frac{\d }{\d t } \left\| \drr \left( t \right) \right\|_{{\cFFSHud}}^2 + 
 c \left\| \nabla \drr \left( t \right) \right\|_{{\cFFSHud}}^2
\\ 
  \leqslant
 \left| \left(\left. F^\varepsilon_{r, R} \left( t \right)  \right| \drr  \left( t \right)  \right)_{{\cFFSHud}} \right| +
 \left| \left(\left. G^\varepsilon_{r, R} \left( t \right)  \right| \drr  \left( t \right)  \right)_{{\cFFSHud}} \right|
 \\
+
\left| \left( \left. \left( 1-\Psi_{r, R} \left( D \right) \right) \Lambda \left( \uh \right) \right| \drr \right)_{{\cFFSHud}} \right| 
 .
 \end{multline}
 Thanks to the explicit definition of $ \Lambda $ given in \eqref{eq:def_Lambda} an integration by parts and young inequality we deduce
 \begin{multline} \label{stima2}
 \left| \left( \left. \left( 1-\Psi_{r, R} \left( D \right) \right) \Lambda \left( \uh \right) \right| \drr \right)_{{\cFFSHud}} \right|
 \\
 \begin{aligned}
 \leqslant & \ \left| \left( \left. \left( 1-\Psi_{r, R} \left( D \right) \right) \left( -\Dh \right)^{-1}\diveh \diveh \left( \uh\otimes\uh \right)  \right| \partial_3 \drr \right)_{{\cFFSHud}} \right|,\\
 \leqslant & \frac{c}{16} \left\| \nabla \drr \right\|_{{\cFFSHud}}^2 + \left\| \left( 1-\Psi_{r, R} \right) P_0 \left( D \right) \left( \uh\otimes\uh \right)  \right\|_{{\cFFSHud}}^2,
  \end{aligned}
 \end{multline}
 where we denoted $ P_0 \left( D \right)= \left( -\Dh \right)^{-1}\diveh \diveh $.\\
 With the bounds \eqref{bound_F}, \eqref{bound_G} and \eqref{stima2} the  equation \eqref{stima1} becomes
 \begin{multline}\label{eq1}
 \frac{1}{2}\frac{\d }{\d t } \left\| \drr \left( t \right) \right\|_{{\cFFSHud}}^2 + 
\left( \frac{3\ c}{4} - C \left\| \drr \left( t \right) \right\|_{{\cFFSHud}} \right) 
  \left\| \nabla \drr \left( t \right) \right\|_{{\cFFSHud}}^2
  \\
  \leqslant
 \left(  f\left( t \right) + g_1 \left( t \right) \right) \ \left\| \drr \left( t \right) \right\|_{{\cFFSHud}}^2
  + \left( g_2 ^{r, R}\left( t \right)  + g_3 ^{r, R}\left( t \right)  \right)\ \left\| \Wrr \left( t \right) \right\|_{\Linfty}\\
 +g_4 \left( t \right) \ \left\| \Wrr \left( t \right) \right\|_{\Linfty}^2
+g_5^{r, R} \left( t \right) 
 ,
 \end{multline}
 where 
\begin{equation}
\label{g5}
g_5^{r, R}= \left\| \left( 1-\Psi_{r, R} \right) P_0 \left( \uh\otimes\uh \right)  \right\|_{{\cFFSHud}}^2.
\end{equation} 
 We omit the dependence of $ f $ and $ g_1 $ on the parameters $ r, R, \varepsilon $ in light of the results of Lemma \ref{lem:time_reg_nonlinearity}.\\
 
 Let us define at this point the time
 \begin{equation*}
 T^\star = \sup \set{  0 < t \leqslant T_{U_0} \ \left| \ 
 \left\| \drr \left( t \right) \right\|_{{\cFFSHud}} < \frac{c}{4 C}
 \right. },
 \end{equation*}
 where $ T_{U_0} $ is the maximal lifespan of $ U^\varepsilon $ defined in Theorem \ref{thm:FK_applied}.\\
 Moreover for each $ t\in \left[ 0 , T^\star\right] $, thanks of the definition of $ T^\star $, we can deduce that
 \begin{equation*}
 \frac{3\ c}{4} - C \left\| \drr \left( t \right) \right\|_{{\cFFSHud}} \geqslant \frac{c}{2} 
  ,
 \end{equation*}
 from which, combined with 
  \eqref{eq1} we can deduce:
  \begin{multline}\label{eq2}
 \frac{1}{2}\frac{\d }{\d t } \left\| \drr \left( t \right) \right\|_{{\cFFSHud}}^2 + 
\frac{c}{2} 
  \left\| \nabla \drr \left( t \right) \right\|_{{\cFFSHud}}^2
  \\
  \leqslant
 \left(  f\left( t \right) + g_1 \left( t \right) \right) \ \left\| \drr \left( t \right) \right\|_{{\cFFSHud}}^2
  + \left( g_2 ^{r, R}\left( t \right) + g_3 ^{r, R}\left( t \right) \right) \ \left\| \Wrr \left( t \right) \right\|_{\Linfty}
  \\
 +g_4 \left( t \right) \ \left\| \Wrr \left( t \right) \right\|_{\Linfty}^2  +g_5^{r, R} \left( t \right) .
 \end{multline}
 Let us set
 \begin{equation*}
 \Xi \left( t \right) = - 2 \int_0^t \left(  f\left( \tau \right) + g_1 \left( \tau \right) \right) \d \tau,
 \end{equation*}
 and let us remark that, since $ f, g_1\in L^1 \left( \R_+ \right) $, then $ \Xi, e^{\pm \Xi}\in L^\infty \left( \R_+ \right) $, and moreover
\begin{align}\label{eq_exp_bounds}
e^{-\Xi \left( t \right)} \geqslant & \ e^{- \left\| \Xi \right\|_{L^\infty \left( \R_+ \right)}}, &
e^{\Xi \left( t \right)} \leqslant & \ e^{ \left\| \Xi \right\|_{L^\infty \left( \R_+ \right)}}.
\end{align} 
Standard calculation on \eqref{eq2} and  integration-in-time imply that
 \begin{multline*}
 \left\| \drr \left( t \right) \right\|_{{\cFFSHud}}^2 
 + c \int_0^t e^{ - \left( \Xi \left( t \right) - \Xi \left( \tau \right) \right)}
 \left\| \nabla \drr \left( \tau \right) \right\|_{{\cFFSHud}}^2 \d \tau
 \\
  \leqslant
   e^{-\Xi \left( t \right)} \left\| \delta_{r, R, 0} \right\|_{{\cFFSHud}}^2
  +
  C \int_0^t e^{ - \left( \Xi \left( t \right) - \Xi \left( \tau \right) \right)}
  \left( \left( g_2 ^{r, R}\left( \tau \right) + g_3 ^{r, R}\left( \tau \right) \right) \ \left\| \Wrr \left( \tau \right) \right\|_{\Linfty}\right.
  \\
  \left. + g_4\left( \tau \right) \ \left\| \Wrr \left( \tau \right) \right\|_{\Linfty}^2
+g_5^{r, R} \left( \tau \right)   
   \right)\d \tau,
 \end{multline*}
 whence by the use of \eqref{eq_exp_bounds} we deduce
 \begin{multline} \label{eq:eq_BS1}
 \left\| \drr \left( t \right) \right\|_{{\cFFSHud}}^2 
 + c \int_0^t 
 \left\| \nabla \drr \left( \tau \right) \right\|_{{\cFFSHud}}^2 \d \tau
 \\
  \leqslant C
  \left\| \delta_{r, R, 0} \right\|_{{\cFFSHud}}^2
  +
  C \int_0^t 
  \left( \left( g_2 ^{r, R}\left( \tau \right) + g_3 ^{r, R}\left( \tau \right) \right) \ \left\| \Wrr \left( \tau \right) \right\|_{\Linfty}\right.
  \\
  \left. + g_4\left( \tau \right) \ \left\| \Wrr \left( \tau \right) \right\|_{\Linfty}^2 
+g_5^{r, R} \left( \tau \right)  
  \right)\d \tau.
 \end{multline}
 Moreover since $ g^{r,R}_2, g_4 \in L^\infty \left( \R_+ \right) $, $ g_3 ^{r, R} \in L^2 \left( \R_+ \right) $ and thanks to the estimates \eqref{eq:Strichartz_estimates} we deduce
 \begin{multline}\label{stima4}
 C \int_0^t 
  \left( \left( g_2 ^{r, R}\left( \tau \right) + g_3 ^{r, R}\left( \tau \right) \right) \ \left\| \Wrr \left( \tau \right) \right\|_{\Linfty} + g_4\left( \tau \right) \ \left\| \Wrr \left( \tau \right) \right\|_{\Linfty}^2 \right)\d \tau 
  \\
\begin{aligned}
  \leqslant& \ C \left( \left\| \Wrr \right\|_{L^1\left( \R_+; L^\infty\left( \R^3 \right) \right)} +
\left\| \Wrr \right\|_{L^2\left( \R_+; L^\infty\left( \R^3 \right) \right)}  
   +
   \left\| \Wrr \right\|_{L^2\left( \R_+; L^\infty\left( \R^3 \right) \right)}^2 \right),\\
  \leqslant & \ C_{r, R} \left( \varepsilon^{1/4} + \varepsilon^{1/8} \right), 
  \end{aligned}  
 \end{multline}
 for $ \varepsilon < \varepsilon_0 $ positive and sufficiently small. 
In light of the definition of $ g_5^{r, R} $ given in \eqref{g5} and Lemma \ref{lem:integrbility_forcing_term_LHfreq} we deduce
\begin{equation}\label{stima3}
C\int_0^\infty g_5^{r, R}\left( \tau \right) \d \tau \leqslant \frac{\eta^2}{3},
\end{equation}
 
 The  bound \eqref{stima4}, \eqref{stima3} and \eqref{eq:hyp_smallness_initial_data} transform \eqref{eq:eq_BS1} into
 \begin{equation*}
 \left\| \drr \left( t \right) \right\|_{{\cFFSHud}}^2 
 + c \int_0^t 
 \left\| \nabla \drr \left( \tau \right) \right\|_{{\cFFSHud}}^2 \d \tau 
 \leqslant \frac{2}{3} \ \eta^2 + C_{r, R} \left( \varepsilon^{1/4} + \varepsilon^{1/8} \right).
 \end{equation*}
 Moreover if $  \varepsilon \lesssim \displaystyle \pare{\frac{\eta^2}{3C_{r, R}}}^8 $ we deduce 
 \begin{equation*}
 \frac{2}{3} \ \eta^2 + C_{r, R} \left( \varepsilon^{1/4} + \varepsilon^{1/8} \right) <\eta^2,
 \end{equation*}
 and hence the bound is independent from the time variable, whence we deduce that $ T^\star = T_{U_0} $ and we prove the claim.
 \hfill $ \Box $

 Proposition \ref{prop:convergence} allows us to prove that, for $ \varepsilon $ sufficiently close to zero, $ U^\varepsilon $ solution of \eqref{perturbed BSSQ} is globally well posed by a standard procedure, which is formalized in the following corollary.
 
 \begin{cor}
 Let $ \eta, r=r_\eta, R=R_\eta, \varepsilon_0=\varepsilon_0\pare{\eta}, U_0 $ be as in the statement of Proposition \ref{prop:convergence}, then for each $ \varepsilon \in \pare{0, \varepsilon_0} $
 \begin{equation*}
 U^\varepsilon \in \Eud\cap L^4 \pare{\RR_+; \dot{H}^1 \pare{\RR^3}}.
 \end{equation*}
 \end{cor}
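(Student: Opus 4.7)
The plan is to exploit the decomposition $U^\varepsilon = \delta^\varepsilon_{r,R} + W^\varepsilon_{r,R} + \bar{U}$ and show that each of the three summands lies in $\dot{\mathcal{E}}^{1/2}$, after which the Fujita--Kato blow-up criterion \eqref{eq:BU_condition} immediately upgrades the local-in-time solution to a global one. The first summand is controlled by Proposition \ref{prop:convergence}: the estimate \eqref{eq:bound_norm_E1/2_delta}, valid on the whole interval $[0,T_{U_0})$, gives directly
\begin{equation*}
\norm{\delta^\varepsilon_{r,R}}_{L^\infty([0,T_{U_0});\,\cFFSHud)}^2 + c\int_0^{T_{U_0}}\norm{\nabla\delta^\varepsilon_{r,R}(\tau)}_{\cFFSHud}^2\d\tau \leqslant \eta^2.
\end{equation*}
The second summand belongs to $\Eud$ by Lemma \ref{lem:existence_sol_free_wave} and the bound \eqref{eq:est_H1/2_Wrr}, while the third one lies in $\Eud$ as a consequence of Proposition \ref{prop:strong_reg_ubar} applied with $s=1/2$ (noting that $\bar{U}=(\uh,0,0)^\intercal$ and that $\PP_0 U_0 \in \cFFSHud$ since $\PP_0$ is a Fourier multiplier of order zero). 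Summing these three bounds yields $U^\varepsilon \in \Eud$ on $[0,T_{U_0})$ uniformly in time.

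Next, I would upgrade this $\Eud$ control into an $L^4(\R_+;\dot{H}^1(\R^3))$ bound by the standard interpolation between $\dot{H}^{1/2}$ and $\dot{H}^{3/2}$: for every $\tau\in[0,T_{U_0})$,
\begin{equation*}
\norm{U^\varepsilon(\tau)}_{\dot{H}^1}^4 \leqslant \norm{U^\varepsilon(\tau)}_{\cFFSHud}^2\,\norm{\nabla U^\varepsilon(\tau)}_{\cFFSHud}^2,
\end{equation*}
so that
\begin{equation*}
\int_0^{T_{U_0}}\!\!\norm{U^\varepsilon(\tau)}_{\dot{H}^1}^4\d\tau \leqslant \norm{U^\varepsilon}_{L^\infty([0,T_{U_0});\,\cFFSHud)}^2\int_0^{T_{U_0}}\!\!\norm{\nabla U^\varepsilon(\tau)}_{\cFFSHud}^2\d\tau < \infty.
\end{equation*}

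Finally, the blow-up criterion \eqref{eq:BU_condition} of Theorem \ref{thm:FK_applied} states that if the maximal lifespan $T_{U_0}$ were finite, the above integral would necessarily be infinite; since we have just shown it is finite, we conclude $T_{U_0}=\infty$, and therefore $U^\varepsilon \in \Eud \cap L^4(\R_+;\dot{H}^1(\R^3))$ globally in time. No step presents a real obstacle here: the only subtle point is to be sure that the bound of Proposition \ref{prop:convergence} is genuinely uniform up to $T_{U_0}$ (not only on some strictly smaller interval), which is precisely what the bootstrap argument just carried out guarantees; the rest is a direct assembly of results already established in the paper.
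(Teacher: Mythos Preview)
Your proof is correct and follows essentially the same route as the paper's own argument: decompose $U^\varepsilon=\drr+\Wrr+\bar{U}$, invoke Proposition~\ref{prop:convergence}, Lemma~\ref{lem:existence_sol_free_wave} and Proposition~\ref{prop:strong_reg_ubar} respectively to place each piece in $\Eud$ on $[0,T_{U_0})$, then use the interpolation $\norm{U^\varepsilon}_{\dot H^1}^4\leqslant\norm{U^\varepsilon}_{\dot H^{1/2}}^2\norm{\nabla U^\varepsilon}_{\dot H^{1/2}}^2$ (equivalently the embedding $\dot{\mathcal{E}}^{1/2}_T\hookrightarrow L^4_T\dot H^1$) to contradict the blow-up criterion~\eqref{eq:BU_condition} and conclude $T_{U_0}=\infty$. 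The only cosmetic difference is that the paper phrases the interpolation step as a continuous embedding of function spaces, whereas you write the pointwise inequality explicitly.
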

 
 \begin{proof}
 We proved respectively in Proposition \ref{prop:strong_reg_ubar} and Lemma \ref{lem:existence_sol_free_wave} that $ \bar{U} $ and $ \Wrr $ belong to $ \Eud $. Moreover Proposition  \ref{prop:convergence} asserts that
 \begin{equation*}
 \norm{\drr}_{\dot{\mathcal{E}}^{1/2}_{T_{U_0}}\pare{\RR^3}}\leqslant \eta.
 \end{equation*}
 Whence for each $ T\in \left[0, T_{U_0}\right) $
 \begin{equation*}
 \norm{U^\varepsilon}_{\dot{\mathcal{E}}^{1/2}_{T}\pare{\RR^3}}\leqslant C_{r, R}< \infty.
 \end{equation*}
 Moreover, since for each $ T\in \left[0, T_{U_0}\right) $ the space $ \dot{\mathcal{E}}^{1/2}_{T}\pare{\RR^3} $ is continuously embedded in $ L^4 \pare{[0,T];  \dot{H}^1 \pare{\RR^3}} $ we deduce that
 \begin{equation*}
 \limsup_{T \nearrow T_{U_0}}  \norm{U^\varepsilon}_{L^4 \pare{[0,T];  \dot{H}^1 \pare{\RR^3}} } \leqslant C_{r, R} < \infty,  
 \end{equation*}
 which indeed is a contradiction of the blow-up criterion \eqref{eq:BU_condition}, whence with a continuation argument we deduce that
 \begin{equation*}
 T_{U_0}=\infty.
 \end{equation*}
 \end{proof}

 \subsection{Proof of Lemma \ref{lem:bilinear bounds}} The first bound is a simple application of the definition \eqref{eq:Hs_sp} and of Lemma \ref{lem:prod_rules_Sobolev}
 \begin{align*}
 \left| \left(\left. \drr \cdot \nabla \drr \right| \drr  \right)_{{\cFFSHud}} \right| = & \norm{ \drr \otimes \drr }_{{\cFFSHud}} \norm{ \nabla \drr}_{{\cFFSHud}},\\
 \leqslant & \ C \norm{  \drr}_{{\cFFSHud}}^2 \norm{ \nabla \drr}_{{\cFFSHud}}.
 \end{align*}
 The estimate is derived by interpolation of Sobolev spaces.\\
 For the second estimate 
 \begin{multline*}
 \left| \left(\left. \dive \left( \drr \otimes \left( \uh + \Wrr \right) \right) \right| \drr  \right)_{{\cFFSHud}} \right|\\
 \begin{aligned}
  \leqslant & \  C \norm{ \drr \otimes \left( \uh + \Wrr \right)}_{{\cFFSHud}} \norm{ \nabla \drr}_{{\cFFSHud}},\\
 \leqslant & \  C \norm{ \drr}_{\dot{H}^1 \left( \R^3 \right)} \norm{ \left( \uh + \Wrr \right)}_{\dot{H}^1 \left( \R^3 \right)} \norm{ \nabla \drr}_{{\cFFSHud}},
  \end{aligned}
 \end{multline*}
 an interpolation of Sobolev spaces and triangular inequality conclude the second estimate.\\
 For the next term 
 \begin{align*}
 \left| \left(\left. \uh\cdot \nh \Wrr \right| \drr  \right)_{{\cFFSHud}} \right| \leqslant & \ \norm{ \uh \cdot \nabla \Wrr}_{{\cFFSLtwo}} \norm{\nabla \drr}_{{\cFFSLtwo}},\\
 \leqslant & \ C_{r, R} \norm{ \uh}_{{\cFFSLtwo}} \norm{ \Wrr}_{L^\infty \left( \R^3 \right)} \norm{\nabla \drr}_{{\cFFSLtwo}},
 \end{align*}
 where in the last inequality we applied H\"older inequality and Bernstein inequality.
 For the last term it suffice to remark that the function $  w^\varepsilon  _{r,R}\cdot \nabla \Wrr  $ is well-defined and still localized in the Fourier space, hence apply H\"older and Bernstein inequalities.
 
 \subsection{Proof of Lemma \ref{lem:time_reg_nonlinearity}} To deduce the bound \eqref{bound_F} and \eqref{bound_G} it suffice to apply repeatedly Young inequality to the bounds of Lemma \ref{lem:bilinear bounds}, in detail:\\
 applying the convexity inequality $ \alpha \ \beta \leqslant \frac{c}{16} \alpha^{4/3} + C \ \beta^4 $ we deduce
 \begin{multline*}
 \left| \left(\left. \dive \left( \drr \otimes \left( \uh + \Wrr \right) \right) \right| \drr  \right)_{{\cFFSHud}} \right|\\
\begin{aligned}
 \leqslant & \ C 
 \left( \left\| \uh \right\|_{{\cFFSHud}}^{1/2}  \left\| \nabla \uh \right\|_{{\cFFSHud}}^{1/2}
 +
 \left\| \Wrr \right\|_{{\cFFSHud}}^{1/2}  \left\| \nabla \Wrr \right\|_{{\cFFSHud}}^{1/2} \right)\\
& \hspace{6cm} \times \left\| \drr \right\|_{{\cFFSHud}}^{1/2}  \left\| \nabla \drr \right\|_{{\cFFSHud}}^{3/2}\\
\leqslant & \  \frac{c}{16}\left\| \nabla \drr \right\|_{{\cFFSHud}}^2\\
& +  \ C 
 \left( \left\| \uh \right\|_{{\cFFSHud}}^{2}  \left\| \nabla \uh \right\|_{{\cFFSHud}}^{2}
 +
 \left\| \Wrr \right\|_{{\cFFSHud}}^{2}  \left\| \nabla \Wrr \right\|_{{\cFFSHud}}^{2} \right) \left\| \drr \right\|_{{\cFFSHud}}^2,
\end{aligned} 
 \end{multline*}
 and hence we set 
 \begin{equation*}
 f_{r, R} = \ C 
 \left( \left\| \uh \right\|_{{\cFFSHud}}^{2}  \left\| \nabla \uh \right\|_{{\cFFSHud}}^{2}
 +
 \left\| \Wrr \right\|_{{\cFFSHud}}^{2}  \left\| \nabla \Wrr \right\|_{{\cFFSHud}}^{2} \right),
 \end{equation*}
 obtaining the bound \eqref{bound_F}.\\

 Next we prove \eqref{bound_G}. In the third inequality of Lemma \ref{lem:time_reg_nonlinearity} we proceed as follows
 \begin{equation*}
 \begin{aligned}
\left| \left(\left. \uh\cdot \nh \Wrr \right| \drr  \right)_{{\cFFSHud}} \right|
\leqslant & \ 
 C_{r, R} \left\| \nabla \drr \right\|_{{\cFFSLtwo}}  \left\| \uh \right\|_{{\cFFSLtwo}} \left\| \Wrr \right\|_{\Linfty}\\
 = & \ g_3^{r, R} \left\| \Wrr \right\|_{\Linfty}.
 \end{aligned}
  \end{equation*}

Next, in the fourth inequality of Lemma \ref{lem:time_reg_nonlinearity} we apply the inequality
\begin{equation*}
\alpha \ \beta \ \gamma \leqslant \frac{c}{64} \ \alpha^4 + C \ \beta^4 + C \ \gamma^2
\end{equation*}
in order to deduce the following inequality
\begin{multline*}
\left| \left(\left.  w^\varepsilon  _{r,R}\cdot \nabla \uh  \right| \drr  \right)_{{\cFFSHud}}  \right|\\
\begin{aligned}
\leqslant & \ 
C \ \left\| \Wrr \right\|_{\Linfty} \left\| \uh \right\|_{{\cFFSHud}}^{1/2} \left\| \nabla \uh \right\|_{{\cFFSHud}}^{1/2}
 %----------------------------------------------
 \left\| \drr \right\|_{{\cFFSHud}}^{1/2} \left\| \nabla \drr \right\|_{{\cFFSHud}}^{1/2}\\
 %-----------------------------------------------
 \leqslant & \ \frac{c}{64} \left\| \nabla \drr \right\|_{{\cFFSHud}}^2   
 + C \left\| \nabla \uh \right\|_{{\cFFSHud}}^2 \left\| \drr \right\|_{{\cFFSHud}}^2\\
& \hspace{4cm} + C \ \left\| \uh \right\|_{{\cFFSHud}}  \left\| \Wrr \right\|_{\Linfty} ^2, 
\end{aligned}
\end{multline*}
hence we set
\begin{align*}
g_4 = & \ C \ \left\| \uh \right\|_{{\cFFSHud}} ,\\
g_{1, \RN{1}}^{r, R} = & \  C \left\| \nabla \uh \right\|_{{\cFFSHud}}^2.
\end{align*}

For the last inequality it suffice to remark that
\begin{multline*}
\left| \left(\left.  w^\varepsilon  _{r,R}\cdot \nabla \Wrr  \right| \drr  \right)_{{\cFFSHud}}  \right|
 \leqslant 
  \
 C_{r, R} \left\| \Wrr \right\|_{\Linfty} \left\| \Wrr \right\|_{{\cFFSLtwo}} \left\| \drr \right\|_{{\cFFSHud}}^2\\
 +C_{r, R} \left\| \Wrr \right\|_{\Linfty} \left\| \Wrr \right\|_{{\cFFSLtwo}},
\end{multline*}
whence we set 
\begin{align*}
g_{1, \RN{2}, \varepsilon}^{r, R}= & \ C_{r, R} \left\| \Wrr \right\|_{\Linfty} \left\| \Wrr \right\|_{{\cFFSLtwo}},\\
g_2^{r, R} = & \ C_{r, R}  \left\| \Wrr \right\|_{{\cFFSLtwo}}.
\end{align*}
Lastly we finally define
\begin{equation*}
g_{1, \varepsilon}^{r, R}= g_{1, \RN{1}}^{r, R} +
g_{1, \RN{2}, \varepsilon}^{r, R},
\end{equation*}
and we deduce the bound \eqref{bound_G}.\\

 The function $ f_{r, R} $ belongs indeed to $ L^1 \left( \R_+ \right) $ uniformly with respect to $ r, R $ thanks to the result in Proposition \ref{prop:strong_reg_ubar} and Lemma \ref{lem:existence_sol_free_wave}.\\
 For the function $ g^{r, R}_{1, \varepsilon} $ it suffice to integrate in time and to use the result in Proposition \ref{prop:strong_reg_ubar} and \eqref{eq:Strichartz_estimates} to obtain
 \begin{align*}
 \norm{ g^{r, R}_{1, \varepsilon}}_{L^1 \left( \R_+ \right)}
 \leqslant  & \
 C \norm{ \nabla \uh }_{L^2 \left( \R_+; {\cFFSHud} \right)}^2 
 +
 C_{r, R}
  \left\| \Wrr \right\|_{L^\infty \left( \R_+;{\cFFSLtwo}\right)}\left\| \Wrr \right\|_{L^1\left(\R_+;  \Linfty \right)}\\
  \leqslant & \ C + C_{r, R}\ \varepsilon^{1/4},\\
  < & \ \infty,
 \end{align*}
 if $ \varepsilon $ is sufficiently small.

\section{Proof of the main result} \label{sec:main_result}

Section \ref{sec:bootstrap} gives us all the ingredients required in order to prove the main result of the present paper, namely Theorem \ref{thm:main_result}. Remarkably the statement in Theormem \ref{thm:main_result} and Proposition \ref{prop:convergence} are very similar: the difference is that $ W^\varepsilon $ solution of \eqref{eq:free_wave_total_initial_data} does not depend on the parameters $ r, R $ as $ \Wrr $ solution of \eqref{eq:free_wave_CFFS}. Let us hence define 
\begin{equation*}
\delta^\varepsilon = U^\varepsilon - W^\varepsilon - \bar{U}. 
\end{equation*}

In Section \ref{sec:dispersive_properties} we focused on  existence, regularity and dispersive results for $ \Wrr $, but no result was proved for $ W^\varepsilon $. Namely the initial data of the system \eqref{eq:free_wave_total_initial_data}, which is solved by $ W^\varepsilon $, is not any more localized in the frequency space. The estimate \eqref{eq:est_H1/2_Wrr} hence does not hold true any more, in particular the bound 
\begin{equation*}
\left\| \left. \Wrr\right|_{t=0} \right\|_{{\cFFSHud}}^2 \leqslant C_{r, R} \left\| U_0 \right\|_{{\cFFSHud}},
\end{equation*}
is false for initial data which are not localized as for $ W^\varepsilon $.
 Fortunately we can extend the result of Lemma \ref{lem:existence_sol_free_wave} to the system \eqref{eq:free_wave_total_initial_data} with an argument very similar to the one given in the proof of Lemma \ref{lem:reg_deltarR0}. We omit a detailed proof here, but it suffice to remark that the operator $ \PP_{+, \varepsilon}+\PP_{-, \varepsilon}= 1-\PP_0 $, and that the operator $ 1-\PP_0 $ is continuous in any $ \cFFSHs $ space. We hence showed that, if $ U_0\in {\cFFSHud} $,
 \begin{equation*}
 W^\varepsilon \in \Eud,
 \end{equation*}
for each $ \varepsilon > 0 $. We shall use this property continuously in what follows.

Let us fix now an $ \eta > 0 $ such that satisfies \eqref{eq:condition_on_eta} and Proposition \ref{prop:convergence} holds true. Let us moreover choose a $ \varepsilon \in \left[0, \varepsilon_0\pare{\eta} \right) $ where
\begin{equation*}
\varepsilon_0
= \frac{1}{C} \ \frac{\eta^{16}}{C_{r, R}}.
\end{equation*}
Accordingly to the statement of Proposition \ref{prop:convergence} there exist some positive $ 0<r_\eta \leqslant R_{\eta}< \infty $ so that, fixed $ r\in\pare{0, r_\eta} $ and $ R\in\pare{R_\eta, \infty} $ the bound
\begin{equation}\label{c1}
\norm{\drr}_{\Eud}\leqslant \eta, 
\end{equation}
holds uniformly in $ \pare{0, \varepsilon_0} $.

With such setting indeed we have that
\begin{equation}\label{c3}
\delta^{\varepsilon} = \delta^{\varepsilon}_{r, R} - \left( W^\varepsilon -W^{\varepsilon}_{r, R} \right).
\end{equation}
We can hence exploit a dominated convergence argument to argue that, fixed a $ \eta $ as above,   there exists some positive $ 0<r_1\leqslant R_1 <\infty $ so that fixed $ r\in\pare{0, r_1} $ and $ R\in\pare{R_1, \infty} $
\begin{equation}\label{c2}
\norm{W^\varepsilon -W^{\varepsilon}_{r, R}}_{\Eud}\leqslant \eta,
\end{equation} 
uniformly in $ \varepsilon \in \pare{0, \varepsilon_0}$. \\
Let us hence fix some 
\begin{equation*}
\begin{aligned}
r\in\pare{0, \min\set{r_\eta, r_1}},  && 
R \in \pare{\max \set{R_\eta, R_1}, \infty},
\end{aligned}
\end{equation*}
so that for such values the conditions \eqref{c1} and \eqref{c2} are satisfied simultaneously. We hence deduce from \eqref{c3} that
\begin{equation*}
\limsup_{\varepsilon\to 0}\norm{\delta^\varepsilon}_{\Eud}\leqslant 2\eta, 
\end{equation*}
uniformly for each $ \eta $ satisfying \eqref{eq:condition_on_eta}, i.e. for each 
\begin{equation*}
\eta\in \left[ 0, \min\set{\frac{c}{4C}, 1}\right).
\end{equation*}
Whence we let $ \eta\to 0 $ in order to conclude the proof.

\footnotesize{
\providecommand{\bysame}{\leavevmode\hbox to3em{\hrulefill}\thinspace}
\providecommand{\MR}{\relax\ifhmode\unskip\space\fi MR }
% \MRhref is called by the amsart/book/proc definition of \MR.
\providecommand{\MRhref}[2]{%
  \href{http://www.ams.org/mathscinet-getitem?mr=#1}{#2}
}
\providecommand{\href}[2]{#2}

\bibliographystyle{amsplain}}
 
 \end{document}